\title{Equivariant $\D$-modules on alternating senary 3-tensors}
\author{Andr\'as C. L\H{o}rincz}
\address{Department of Mathematics, Purdue University, West Lafayette, IN 47907}
\email{alorincz@purdue.edu}
\author{Michael Perlman}
\address{Department of Mathematics, University of Notre Dame, Notre Dame, IN 46556}
\email{mperlman@nd.edu}
\subjclass[2010]{Primary 14F10, 13D45, 13A50}
 \newtheorem{theorem}{Theorem}[section]
 \newtheorem{lemma}[theorem]{Lemma}
 \newtheorem{corollary}[theorem]{Corollary}
 \newtheorem{prop}[theorem]{Proposition}
\newtheorem*{class}{Classification of Simple Modules}
\newtheorem*{quiv}{Theorem on Quiver Structure}
\newtheorem*{loccoh}{Theorem on Local Cohomology}
\theoremstyle{remark}
 \newtheorem{remark}[theorem]{Remark}
\newcommand{\defi}[1]{{\upshape\sffamily #1}}
\DeclareMathOperator{\ShHom}{\mathscr{H}\text{\kern -3pt {\calligra\large om}}\,}
\newcommand{\C}{\mathbb{C}}
\newcommand{\Z}{\mathbb{Z}}
\renewcommand{\a}{\alpha}
\renewcommand{\b}{\beta}
\newcommand{\bo}{\bigoplus}
\newcommand{\g}{\gamma}
\renewcommand{\d}{\delta}
\newcommand{\D}{\mathcal{D}}
\newcommand{\bw}{\bigwedge}
\renewcommand{\det}{\textrm{det}}
\renewcommand{\ll}{\lambda}
\newcommand{\LL}{\Lambda}
\newcommand{\oo}{\otimes}
\renewcommand{\SS}{\mathbb{S}}
\newcommand{\sub}{\subset}
\newcommand{\F}{\mathcal{F}}
\renewcommand{\L}{\mathcal{L}}
\renewcommand{\O}{\mathcal{O}}
\newcommand{\R}{\mathcal{R}}
\newcommand{\Q}{\mathcal{Q}}
\newcommand{\Ext}{\operatorname{Ext}}
\newcommand{\GL}{\operatorname{GL}}
\newcommand{\gr}{\operatorname{gr}}
\newcommand{\Hom}{\operatorname{Hom}}
\newcommand{\End}{\operatorname{End}}
\newcommand{\rep}{\operatorname{rep}}
\newcommand{\SL}{\operatorname{SL}}
\newcommand{\sort}{\operatorname{sort}}
\newcommand{\Sub}{\operatorname{Sub}}
\newcommand{\Sym}{\operatorname{Sym}}
\newcommand{\Tot}{\operatorname{Tot}}
\newcommand\charC{{\operatorname{charC}}}
\renewcommand{\det}{\operatorname{det}}
\newcommand{\Gr}{\operatorname{Gr}}
\newcommand{\opmod}{\operatorname{mod}}
\newcommand{\bb}[1]{\mathbb{#1}}
\newcommand{\mf}[1]{\mathfrak{#1}}
\newcommand{\ol}[1]{\overline{#1}}
\newcommand{\la}{\langle}
\newcommand{\ra}{\rangle}
\def\lra{\longrightarrow}
\newcommand{\tn}{\textnormal}
\numberwithin{equation}{section}
\begin{document}

 \begin{abstract} 
We consider the space $X=\bw^3\bb{C}^6$ of alternating senary 3-tensors, equipped with the natural action of the group $\GL_6$ of invertible linear transformations of $\bb{C}^6$. We describe explicitly the category of $\GL_6$-equivariant coherent $\D_X$-modules as the category of representations of a quiver with relations, which has finite representation type. We give a construction of the six simple equivariant $\D_X$-modules and give formulas for the characters of their underlying $\GL_6$-structures.  We describe the (iterated) local cohomology groups with supports given by orbit closures, determining, in particular, the Lyubeznik numbers associated to the orbit closures. 
\end{abstract}

\maketitle

\section{Introduction}\label{sec:intro}

Let $V$ be a six-dimensional vector space, and let $X=\bw^3V$ be the space of alternating 3-tensors of $V$. This space has a natural action of $G=\GL(V)$ with five orbits. We describe these orbits below, choosing a basis $V=\la e_1,e_2,e_3,e_4,e_5,e_6\ra$.
\begin{itemize}
\item The zero orbit $O_0=\{0\}$.

\item The orbit $O_1$ of dimension $10$, with representative $e_1\wedge e_2\wedge e_3$, whose closure $\ol{O_1}$ is the affine cone over the Grassmannian $\Gr(3,V)\sub \bb{P}(\bw^3V)$ under the Pl\"{u}cker embedding.

\item The orbit $O_2$ of dimension $15$, with representative $e_1\wedge(e_2\wedge e_3+e_4\wedge e_5)$, whose closure $\ol{O_2}$ is the subspace variety
$$
\Sub_5(X)=\{T\in X \mid \exists Z\in \Gr(5,V), T\in \bw^3 Z\}.
$$

\item The orbit $O_3$ of dimension $19$, with representative $e_1\wedge(e_2\wedge e_3 +e_4\wedge e_5)+e_2\wedge e_4\wedge e_6$, whose closure $\ol{O_3}$ is the affine cone over the tangential variety to the Grassmannian $\Gr(3,V)$.

\item The dense orbit $O_4$ of dimension $20$, with representative $e_1\wedge e_2\wedge e_3+e_4\wedge e_5 \wedge e_6$.
\end{itemize}
\noindent Further, for each $k=0,1,2,3,4$, the Zariski closure of $O_k$ is the union of all $O_j$ with $j\leq k$.

The space $\bw^3 \C^6$ is an example in the subexceptional series of representations with finitely many orbits \cite[Section~6]{series}. Other representations from this series include the space of binary cubic forms and the space of $2\times 2\times 2$ hypermatrices. The equivariant $\D$-modules for these representations are described in \cite{bindmod} and \cite{mike}, respectively. For the irreducible representations that are spherical varieties, the categories of equivariant $\D$-modules have been described in \cite{catdmod}. The representations in the subexceptional series are some of the simplest representations that are not spherical. For the space of alternating senary 3-tensors, although the coordinate ring $\C[X]$ is not multiplicity-free as a $G$-module, the algebra of covariants $\C[X]^U$ is a polynomial ring \cite{brion}, where $U$ denotes a maximal unipotent subgroup of $G$.

By the Riemann--Hilbert correspondence, equivariant $\D$-modules correspond to equivariant perverse sheaves. The simple equivariant $\D$-modules are indexed by irreducible equivariant local systems on the orbits of the group action, but their explicit realization is in general difficult (see Open Problem 3 in \cite[Section~6]{mac-vil}).

Put $W=V^\ast$, let $S=\Sym(\bw^3 W)\cong \C[x_{i,j,k}\mid 1\leq i<j<k\leq 6]$ be the ring of polynomial functions on $X$, and $\D=\D_X=S\cdot \la \partial_{i,j,k}\mid 1\leq i<j<k\leq 6\ra$ be the Weyl algebra of differential operators on $X$ with polynomial coefficients. We write $\opmod_G(\D_X)$ for the category of $G$-equivariant coherent $\D$-modules on $X$.

\begin{class}
There are six simple $\GL_6(\C)$-equivariant $\D$-modules on $X=\bw^3 \C^6$. For all orbits $O\neq O_4$, there is a unique simple with support $\ol{O}$. These modules correspond to the trivial local systems on their respective orbits, and we denote them by $D_0=E$, $D_1$, $D_2$, and $D_3$. There are two simple equivariant $\D$-modules with full support: $S$ and $B_4$.

The holonomic duality functor fixes all of the simple modules. The Fourier transform swaps the modules in the two pairs $(S,E)$, $(B_4,D_1)$, and the other simples are fixed.
\end{class}

We give explicit constructions for all the simple equivariant $\D$-modules in Theorem \ref{thm:simples}. Using this, for each simple equivariant $\D$-module we determine the character of its underlying $G$-module structure in Section \ref{sec:character}. Similar formulas for characters were obtained in various other equivariant situations \cites{bindmod, claudiu1,claudiu5}.

The key object in the quiver structure of the category $\opmod_G(\D_X)$ in question is the semi-invariant $f$ of weight $(2^6)$ (see \cite[Proposition 5.7]{saki} and subsequent discussion). Note that $X\setminus O_4=\ol{O_3}$ and $\ol{O_3}=V(f)$. We will gain a lot of information about the category in question by studying the modules $S_f$ and $S_f\cdot \sqrt{f}$, where $S_f$ denotes localization of $S$ at the semi-invariant. The $\D$-module filtrations of these modules will tell us about which nontrivial extensions are possible. By (\cite{catdmod}, Proposition 4.9), these filtrations are dictated by the Bernstein-Sato polynomial (or $b$-function) of $f$. The $b$-function is (see \cite[Section 8]{kimu})
\begin{equation}\label{eq:roots}
b_f(s)=(s+1)(s+5/2)(s+7/2)(s+5).
\end{equation}

It is known that in the case of a group acting on a space with finitely many orbits, the category of equivariant $\D$-modules (or perverse sheaves) is equivalent to the category of finite-dimensional representations of a quiver with relations (see \cite{vilonen} and \cite{catdmod}). We now state the theorem on the quiver structure of the category of $G$-equivariant coherent $\D$-modules on $X$.

\begin{quiv}
There is an equivalence of categories
$$
\opmod_G(\D_X)\cong \rep(Q,I),
$$
where $\rep(Q,I)$ is the category of finite-dimensional representations of a quiver $Q$ with relations $I$. The quiver $Q$ is \smallskip

\[
\begin{tikzcd}
s \arrow[r, shift left, "\a_0"]
& d_3 \arrow[l, "\b_0"]
\arrow[r, shift left, "\a_1"]
& e \arrow[l, "\b_1"]\;\;\;\;\;\;\;\;\;
b_4 \arrow[r, shift left, "\g_0"]
& d_2 \arrow[r, shift left, "\g_1"]
\arrow[l, "\d_0"]
& d_1 \arrow[l, "\d_1"]\\
\end{tikzcd}
\]
The set of relations $I$ is given by all 2-cycles:
$$
\a_0\b_0=\b_0\a_0=\a_1\b_1=\b_1\a_1=\g_0\d_0=\d_0\g_0=\g_1\d_1=\d_1\g_1=0.
$$
\end{quiv}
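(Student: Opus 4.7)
The plan is to invoke the general framework of \cite{catdmod, vilonen}, which establishes that for a reductive group acting on an affine variety with finitely many orbits the category $\opmod_G(\D_X)$ is equivalent to finite-dimensional representations of some quiver with relations, whose vertices are labeled by isomorphism classes of simple equivariant $\D$-modules. By the Classification of Simple Modules, this supplies the six vertices $s, d_3, e, b_4, d_2, d_1$, and it remains to identify the arrows (equivalently, the dimensions of $\Ext^1$-groups between distinct simples) and to pin down the relations.

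The central tool is the pair of $\D$-modules $S_f$ and $S_f\cdot\sqrt{f}$, where $f$ is the semi-invariant of weight $(2^6)$ cutting out $\ol{O_3}=X\setminus O_4$. By \cite[Proposition~4.9]{catdmod}, the roots of $b_f(s)=(s+1)(s+5/2)(s+7/2)(s+5)$ equip these modules with canonical finite filtrations whose successive quotients are simple equivariant $\D$-modules: the two integer roots $-1, -5$ produce a three-step filtration of $S_f$, and the two half-integer roots $-5/2, -7/2$ produce a three-step filtration of $S_f\cdot\sqrt{f}$. Using the explicit constructions in Theorem~\ref{thm:simples} together with support and character considerations, and applying iterated local cohomology along $\ol{O_3}\supset\ol{O_2}\supset\ol{O_1}\supset\{0\}$ to detect extensions deeper in the stratification, one identifies the composition factors as $\{S, D_3, E\}$ for $S_f$ (the ``trivial-character'' side on $O_4$) and $\{B_4, D_2, D_1\}$ for $S_f\cdot\sqrt{f}$ (the ``sign-character'' side). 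Each such uniserial module exactly realizes the linear chain $s - d_3 - e$, respectively $b_4 - d_2 - d_1$, and thereby produces the eight arrows $\a_0, \b_0, \a_1, \b_1, \g_0, \d_0, \g_1, \d_1$. The Fourier transform, which by the Classification swaps $(S,E)$ and $(B_4,D_1)$ while fixing $D_2$ and $D_3$, descends to a quiver involution reversing each chain, providing a useful symmetry constraint on the arrow pattern.

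That no arrow crosses between the two connected components follows because every nontrivial extension of simples must occur as a subquotient of $S_f$, $S_f\cdot\sqrt{f}$, or their Fourier/duality transforms, each of which decomposes according to the $\Z/2$-character of the stabilizer of a point in $O_4$. The two-cycle relations $\a_i\b_i = \b_i\a_i = 0$ (and similarly for $\g_i, \d_i$) reflect that composing opposing extensions from the uniserial filtration lands inside a self-extension of a simple, which must be split since none of the simples admits a loop in the quiver. The main obstacle, and the heart of the argument, will be showing that these are the \emph{only} relations --- equivalently, that the length-two compositions $\a_1\a_0, \b_0\b_1, \g_1\g_0, \d_0\d_1$ are all nonzero. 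This would be verified by an explicit calculation of $\Hom$-spaces among $S_f, S_f\cdot\sqrt{f}$ and their simple subquotients, matched against the dimension of the path algebra $\C Q/I$ for the stated quiver with relations, so that the functor giving the equivalence sends a projective generator to one of the correct dimension.
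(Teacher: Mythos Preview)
Your outline follows the paper's general strategy, but the argument for \emph{completeness} of the arrow set has a real gap. The assertion that ``every nontrivial extension of simples must occur as a subquotient of $S_f$, $S_f\cdot\sqrt{f}$, or their Fourier/duality transforms'' is precisely what must be proved, and it rests on a fact you never invoke: $S_f$ and $S_f\cdot\sqrt{f}$ are the \emph{injective hulls} of $S$ and $B_4$ in $\opmod_G(\D_X)$ \cite[Lemma~2.4]{bindmod}. Only with this do their composition series determine \emph{all} arrows into $s$ and $b_4$ (the uniserial filtration alone merely exhibits the arrows $\b_0,\b_1,\d_0,\d_1$, while holonomic duality is needed to produce $\a_0,\a_1,\g_0,\g_1$, and Fourier then handles the vertices $e,d_1$). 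More seriously, your $\Z/2$-character argument on $O_4$ cannot exclude an arrow between $d_2$ and $d_3$: both $D_2$ and $D_3$ are supported in $\ol{O_3}$ and hence restrict to zero on $O_4$, so the component group of $O_4$ imposes no constraint whatsoever on extensions between them. The paper treats this case by a separate argument: $H^1_f(S)=S_f/S$ is the injective hull of $D_3$ in the subcategory $\opmod_G^{\ol{O_3}}(\D_X)$ \cite[Lemma~3.11]{catdmod}, and since $D_2$ is not among its composition factors there is no arrow $d_2\to d_3$; holonomic duality then rules out $d_3\to d_2$.

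Your treatment of the relations is also off. A nonzero 2-cycle such as $\b_0\a_0$ is a length-two path from $s$ back to $s$ through $d_3$, which corresponds to a uniserial module with composition factors $S,D_3,S$; this is not a self-extension of $S$, so the absence of loops does not force it to vanish. The paper obtains the 2-cycle relations from \cite[Corollary~3.9]{catdmod} combined with the Fourier transform. Conversely, the nonvanishing of the length-two paths $\b_1\b_0$ and $\d_1\d_0$ (and by holonomic duality their reversed counterparts) is read off directly from the fact that $S_f$ and $S_f\cdot\sqrt{f}$ are uniserial of length three, with nonsplitness guaranteed by \cite[Lemma~2.8]{catdmod}; the $\Hom$-space and dimension-matching computation you propose is unnecessary. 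Finally, the appeal to ``iterated local cohomology along $\ol{O_3}\supset\ol{O_2}\supset\ol{O_1}\supset\{0\}$'' plays no role here --- the quiver is determined entirely from $S_f$, $S_f\cdot\sqrt{f}$, and the functorial symmetries, without any local cohomology input.
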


Note that (each connected component of) this quiver appears also in \cite{catdmod}, and has finitely many (isomorphism classes of) indecomposable representations that can be described explicitly \cite[Theorem 2.13]{catdmod}.

For any $G$-stable closed subset $Z$ in $X$, the local cohomology modules $H^i_Z(S)$ are $G$-equivariant coherent $\D$-modules, for all $i\geq 0$. Explicit computations of local cohomology modules are in general notoriously difficult. Several results been obtained for several representations with finitely many orbits (see \cites{bindmod,lHorincz2018iterated,mike,claudiu2,claudiu4,claudiu3}). We state the theorem on their explicit $\D$-module structures in our case (in the following all short exact sequences are nonsplit).

\begin{loccoh}
The following are all the nonzero local cohomology modules of $S$ with support in an orbit closure:
\begin{itemize}
\item[(0)] $H^{20}_{O_0}(S) = E$. \smallskip
\item[(1)] $H^{10}_{\ol{O_1}}(S) = D_1, \;\; H^{13}_{\ol{O_1}}(S) = E, \;\; H^{15}_{\ol{O_1}}(S) = E$.\smallskip
\item[(2)] $0\to D_2 \to H^{5}_{\ol{O_2}}(S) \to D_1 \to 0, \; \; H^{7}_{\ol{O_2}}(S) = D_1, \; \; H^{10}_{\ol{O_2}}(S) = E.$\smallskip
\item[(3)] $0 \to D_3 \to H^1_{\ol{O_3}} (S) \to E \to 0.$
\end{itemize}
\end{loccoh}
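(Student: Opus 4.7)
The plan is to compute the local cohomology modules by moving through the orbit stratification and applying the long exact sequence for local cohomology to each pair $\overline{O_{k-1}} \subset \overline{O_k}$, then identifying composition factors via the Classification of Simple Modules. Throughout, the Theorem on Quiver Structure constrains the possible nonsplit extensions: since the quiver $Q$ has two connected components with explicit arrows, extensions between simples are completely determined by $Q$, so once the composition factors of each $H^i_{\overline{O_k}}(S)$ are pinned down, its module structure is forced. Case (0) is Grothendieck's classical identification of $H^{20}_{O_0}(S)$ with the injective hull of the residue field, which is the simple module $E$; all other cohomological degrees at the origin vanish.

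Case (3) is the shortest nontrivial case. Since $\overline{O_3}=V(f)$ is a principal hypersurface, $H^i_{\overline{O_3}}(S)=0$ for $i\neq 1$ and there is a short exact sequence
\[
0 \to S \to S_f \to H^1_{\overline{O_3}}(S) \to 0.
\]
The composition factors of $S_f$ are controlled by the integer roots $\{-1,-5\}$ of $b_f(s)$ from \eqref{eq:roots} via \cite[Proposition 4.9]{catdmod}: beyond the bottom factor $S$, the further simples coming from the root $-5$ are identified as $D_3$ and $E$ (supports and holonomic duality leave no other possibilities), and the quiver structure on the left component $s\leftrightarrow d_3\leftrightarrow e$ then realizes the unique nonsplit extension, yielding the claim.

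For cases (1) and (2), I would use the long exact sequence
\[
\cdots \to H^i_{\overline{O_{k-1}}}(S) \to H^i_{\overline{O_k}}(S) \to H^i_{O_k}\bigl(S|_{X\setminus\overline{O_{k-1}}}\bigr) \to H^{i+1}_{\overline{O_{k-1}}}(S) \to \cdots,
\]
proceeding inductively along the stratification. The middle term lives in the open set $X\setminus\overline{O_{k-1}}$, where $O_k$ is a single closed $G$-orbit; its structure is determined by the explicit constructions of the simple equivariant $\D$-modules supported on $\overline{O_k}$ (Theorem~\ref{thm:simples}) together with the character formulas of Section~\ref{sec:character}. For case~(2), the subspace variety $\overline{O_2}$ admits a standard desingularization by a vector bundle over $\Gr(5,V)$, producing a Koszul-type complex from which the three cohomological degrees $5, 7, 10$ and their composition factors emerge naturally; the nonsplit extension $0\to D_2\to H^5_{\overline{O_2}}(S)\to D_1\to 0$ is then the unique one allowed by the arrow $\delta_1\colon d_1\to d_2$ in the right component of $Q$. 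For case~(1) the cone structure of $\overline{O_1}$ over $\Gr(3,V)$ plays the analogous role, with the three nonvanishing degrees $10, 13, 15$ arising from specific cohomology contributions of the Grassmannian.

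The main obstacle is ruling out spurious composition factors (for instance $B_4$ or an additional copy of $S$) in cases (1) and (2), and verifying that each local cohomology module corresponds to the asserted indecomposable quiver representation rather than a direct sum. The essential tool for this is the character calculation from Section~\ref{sec:character}, which gives the $G$-character of every simple and allows one to match Euler characteristics along the long exact sequence. This is reinforced by the symmetries in the Classification of Simple Modules: holonomic duality fixes every simple, while Fourier transform swaps $(S,E)$ and $(B_4,D_1)$ and fixes the other simples. Both symmetries must be respected by the final answer, and this rigidifies the problem enough to pin down all composition series, including the precise nonsplit extensions stated in cases (2) and (3).
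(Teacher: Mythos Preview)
Your outline handles parts (0) and (3) correctly, and for part (1) you are on the right track: the cone structure over $\Gr(3,V)$ is indeed what the paper exploits, but concretely it invokes the theorem of \cite{switala2015lyubeznik} (also \cite{garsab}, \cite{LSW}) expressing the higher $H^j_{\ol{O_1}}(S)$ directly in terms of the Betti numbers of $\Gr(3,V)$, which are read off from the Gaussian binomial $\binom{6}{3}_{q^2}$. Your sketch does not name this input, and without it the ``cone structure'' remark does not yield the specific degrees $13$ and $15$.

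The substantive gap is in part (2). First, a small point: worrying about $S$ or $B_4$ as spurious composition factors is unnecessary, since $H^i_{\ol{O_2}}(S)$ is supported on $\ol{O_2}$ and those simples have full support. The real difficulty is determining the multiplicities of $D_1$, $D_2$, $E$ in $H^i_{\ol{O_2}}(S)$ for $i\geq 6$. Your proposed route---a desingularization of $\ol{O_2}$ as a bundle over $\Gr(5,V)$ with an associated Koszul complex---is \emph{not} what the paper does, and more importantly you do not indicate how such a complex would produce the $\D$-module decomposition rather than merely a free resolution or $\Ext$ groups. The paper instead desingularizes the \emph{hypersurface} $\ol{O_3}$ as the total space of a non-semisimple bundle $\eta$ over $\Gr(3,6)$, computes $H^i_{\ol{O_2}}(S/(f))$ as a direct limit of sheaf cohomology $H^{i-1}(\Gr,\L^{-k}\otimes\Sym\eta)$, and then recovers $H^i_{\ol{O_2}}(S)$ from the long exact sequence for multiplication by $f$. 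The key technical work (Propositions~\ref{prop:gr}--\ref{prop:cancel2}) is tracking two specific witness weights $(-5^6)$ and $(-8^6)$ through Borel--Weil--Bott computations and showing that certain connecting homomorphisms in the filtration of $\Sym\eta$ are isomorphisms; one of these (Proposition~\ref{prop:cancel2}) is settled by comparing to a third auxiliary weight $(-6^6)$. None of this is captured by ``character calculations plus Fourier/duality symmetries,'' and those symmetries alone do not determine the multiplicities: for instance, nothing in your argument excludes $H^7_{\ol{O_2}}(S)=H^8_{\ol{O_2}}(S)=E$ alongside what is stated.

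Finally, your inductive long exact sequence for the pair $\ol{O_{k-1}}\subset\ol{O_k}$ shifts the burden to computing $H^i_{O_k}(S|_{X\setminus\ol{O_{k-1}}})$; since $O_k$ is smooth closed there this is concentrated in one degree, but pushing it forward to $X$ and identifying the result in $\opmod_G(\D_X)$ is precisely the hard step, not something ``determined'' by Theorem~\ref{thm:simples} and the character formulas.
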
 

The most difficult part of the theorem above is proving part (2). For the proof, we use a desingularization of $\ol{O_3}$ and reduce the problem to the computation of the cohomology of some vector bundles on the Grassmannian. Computing these cohomology spaces is not an easy task, as these bundles are not semi-simple. Nevertheless, we are able to determine the relevant cohomological pieces that identify the respective simple equivariant $\D$-modules. For this identification, the explicit description of the equivariant simple $\D$-modules plays a crucial role. This technique could be a model for computing local cohomology in other representations with discriminants (see \cite{knopmenzel} and \cite{jerzydisc}).

The article is organized as follows. In Section \ref{sec:prelim} we introduce some basic terminology regarding representations of the general linear group, equivariant $\D$-modules, representation theory of quivers and the Borel-Weil-Bott theorem on the Grassmannian. In Section \ref{sec:cat} we give an explicit construction for all the simple equivariant $\D$-modules, and we determine the quiver corresponding to the category $\opmod_G(\D_X)$. In Section \ref{sec:character} we determine for each simple equivariant $\D$-module its character as a $G$-module. In Section \ref{sec:loccoh} we determine all the (iterated) local cohomology modules with supports given by orbit closures for all simple equivariant $\D$-modules, and describe the Lyubeznik numbers of the orbit closures.

\section{Preliminaries}\label{sec:prelim}
\subsection{Representations and Characters} Let $V$ be a six dimensional complex vector space and write $W=V^{\ast}$. Let $\LL$ be the set of isomorphism classes of finite dimensional irreducible representations of the general linear group $\GL=\GL(W)\cong \GL_6(\C)$. Then $\LL$ can be identified with the set of \defi{dominant} integral weights $(\ll_1\geq \ll_2\geq \cdots \geq \ll_6)\in \Z^6$. We often write $(a^6)$ for the weight $(a,a,a,a,a,a)$. We write $\SS_{\ll}V$ for the irreducible representation (or Schur functor) corresponding to $\ll\in \LL$ (for details, see \cite{jerzy}). 

For $\ll=(\ll_1\geq \ll_2\geq \cdots \geq \ll_6)\in \LL$, we denote its dual by $\ll^*=(-\ll_6,-\ll_5,-\ll_4,-\ll_3,-\ll_2,-\ll_1)$, so that $\SS_\ll W = \SS_{\ll^*} V$.

A $G$-representation $M$ is \defi{admissible} if it decomposes as a direct sum
$$
M=\bo_{\ll \in \LL} (\SS_{\ll}W)^{\oplus a_{\ll}}
$$
for $a_{\ll}\in \Z_{\geq 0}$. Consider the set $\Gamma(\GL)=\Z^{\LL}$ of maps $\LL\to \Z$. Then $\Gamma(\GL)$ is the \defi{Grothendieck group of virtual admissible representations}. Given an admissible representation $M$ as above, we write the formal sum
$$
[M]=\sum_{\ll\in \LL} a_{\ll}\cdot e^{\ll}
$$
for the corresponding element in $\Gamma(\GL)$, where $e^{\ll}$ corresponds to $S_{\ll}W$ and $a_{\ll}$ is the multiplicity. Using this notation, we write
$$
\la [M], e^{\ll}\ra = a_{\ll}
$$
for the multiplicity of $e^{\ll}$ in $[M]$ in $\Gamma(\GL)$. We also have a natural (partially defined) multiplication on $\Gamma(\GL)$ induced by 
\[e^\ll \cdot e^\mu = e^{\ll+\mu}, \,\,\, \mbox{ for } \ll, \mu \in \LL.\]
With this, we can make sense of inverting some virtual representations such as $(1-e^{\ll})$:
\[ \frac{1}{1-e^{\ll}} = 1 + e^{\ll} + e^{2\ll} + \cdots.\]

\subsection{Equivariant $\D$-modules}

Let $\mf{g}$ denote the Lie algebra of $G$. Then differentiating the $G$-action on $X$ we obtain a map from $\mf{g}$ to space of algebraic vector fields on $X$, which in turn yields a map $\mf{g} \to \D_X$. A $\D_X$-module $M$ is \defi{equivariant} if we have a $\D_{G\times X}$-isomorphism $\tau: p^*M \rightarrow m^*M$, where $p: G\times X\to X$ denotes the projection and $m: G\times X\to X$ the map defining the action, with $\tau$ satisfying the usual compatibility conditions (see \cite[Definition 11.5.2]{hot-tak-tan}). This amounts to $M$ admitting an algebraic $G$-action whose differential coincides with the $\mf{g}$-action induced by the natural map $\mf{g} \to \D_X$. The category $\opmod_G(\D_X)$ of equivariant $\D$-modules is a full subcategory of the category $\opmod(\D_X)$ of all coherent $\D$-modules, which is closed under taking submodules and quotients. For a $G$-stable closed subset $Z$ of $X$, we denote by $\opmod_G^Z(\D_X)$ the full subcategory of $\opmod_G(\D_X)$ consisting of equivariant $\D$-modules with support contained in $Z$.

Since $G$ acts on $X$ with finitely many orbits, every module in $\opmod_G(\D_X)$ is regular and holonomic \cite[Theorem~11.6.1]{hot-tak-tan}, and via the Riemann--Hilbert correspondence $\opmod_G(\D_X)$ is equivalent to the category of $G$-equivariant perverse sheaves on $X$. This category is equivalent to the category of finitely generated modules over a finite dimensional $\bb{C}$-algebra (see \cite[Theorem 4.3]{vilonen} or  \cite[Theorem 3.4]{catdmod}), or to the category of representations of a quiver with relations (see \cite{ass-sim-sko}). For more details on categories of equivariant $\D$-modules, cf. \cite{catdmod}.

One important construction of objects in $\opmod_G(\D_X)$ comes from considering local cohomology functors $H^i_Z(\bullet)$, for $Z=\ol{O_k}$ an orbit closure in $X$. Namely, for each $i\geq 0$ and each $M\in\opmod_G(\D_X)$ we consider $i$-th local cohomology module $H^i_Z(M)$ of $M$ with support in $Z$, which is an element of $\opmod_G^Z(\D_X)$. If we write $c = \dim(X) - \dim(Z)$ for the codimension of $Z$ in $X$ and take $M=S$, then $H^i_Z(S) = 0$ for all $i<c$, and $H^c_Z(S)$ contains the simple $D_k$ as a $\D$-submodule (with the convention $S=D_4$, $E=D_0$), and the quotient $H^c_Z(S) / D_k$ is an element in $\opmod_G^{\ol{O_k} \setminus O_k}(\D_X)$.

Another important construction of objects in $\opmod_G(\D_X)$ comes from considering the (twisted) Fourier transform \cite[Section 4.3]{catdmod}. This functor gives a self-equivalence
\[\F : \opmod_G(\D_X) \xrightarrow{\sim} \opmod_G(\D_X).\]
Moreover, since  $\det(\bw^3 W^\ast)=\SS_{(-10^6)}W$, for an object $M\in\opmod_G(\D_X)$, we have in $\Gamma(\GL)$
\begin{equation}\label{eq:fourier}
[\F(M)] = [M]^* \cdot  e^{(-10^6)}.
\end{equation}

Let $S=\Sym(\bw^3 W)\in \opmod_G(\D_X)$ be the ring of polynomial functions on $\bw^3 V$. The action of $\GL$ on $\bw^3 V$ extends to an action on $S$ and the character of $S$ is (see \cite{brion} or \cite[Section 6]{series}):
\begin{equation}\label{eq:charS}
[S]=\frac{1}{(1-e^{(1,1,1,0,0,0)})(1-e^{(2,1,1,1,1,0)})(1-e^{(2,2,2,1,1,1)})(1-e^{(2,2,2,2,2,2)})(1-e^{(3,3,2,2,1,1)})}
\end{equation}
Write $E=\F(S)$ for the simple $G$-equivariant $\D$-module whose support is equal to the origin $O_0$. By (\ref{eq:fourier}) we have:
\begin{equation}\label{eq:charE}
[E]=\frac{e^{(-10^6)}}{(1-e^{(0^3,-1^3)})(1-e^{(0,-1^4,-2)})(1-e^{(-1^3,-2^3)})(1-e^{(-2^6)})(1-e^{(-1^2,-2^2,-3^2)})}
\end{equation}

\subsection{Quivers}

We briefly review some basic material on the representation theory of quivers, following \cite{ass-sim-sko}.  A \defi{quiver} $Q$ is an oriented graph, i.e., a pair $Q=(Q_0,Q_1)$ formed by a finite set of vertices $Q_0$ and a finite set of arrows $Q_1$. An arrow $\alpha\in \mathcal{Q}_1$ has source $s(\alpha)\in \mathcal{Q}_0$ and a target $t(\alpha)\in \mathcal{Q}_0$. A directed path $p$ in $\mathcal{Q}$ from $x\in \mathcal{Q}_0$ to $y\in \mathcal{Q}_0$ is a sequence of arrows $\alpha_1,\cdots, \alpha_k$ such that $s(\alpha_1)=x$, $t(\alpha_k)=y$, and $s(\alpha_i)=t(\alpha_{i-1})$. In which case, we say $p$ has source $x$ and target $y$. A \defi{relation} in $\mathcal{Q}$ is a linear combination of paths of length at least two having the same source and target. We define a \defi{quiver (with relations)} $(Q,I)$ to be a quiver $Q$ together with a finite set of relations $I$.

A \defi{(finite-dimensional) representation} $M$ of a quiver $(Q,I)$ is a family of (finite-dimensional) vector spaces $\{M_x\,|\, x\in Q_0\}$ together with linear maps $\{M(\alpha) : M_{s(\alpha)}\to M_{t(\alpha)}\, | \, \alpha\in Q_1\}$ satisfying the relations induced by the elements of $I$. A morphism $\phi:M\to N$ of two representations $M,N$ of $(Q,I)$ is a collection of linear maps  $\phi = \{\phi(x) : M_x \to N_x\,| \,x\in Q_0\}$, such that for each $\alpha\in Q_1$ we have $\phi(t(\alpha))\circ M(\alpha)=N(\alpha)\circ \phi(s(\alpha))$. The category $\rep(Q,I)$ of finite-dimensional representations of $(Q,I)$ is Abelian, Artinian, Noetherian, has enough projectives and injectives, and contains only finitely many simple objects, seen as follows.

The (isomorphism classes of) simple objects in $(Q,I)$ are in bijection with the vertices of $Q$. For each $x\in Q_0$, the corresponding simple $S^x$ is the representation with $(S^x)_x=\C,\ (S^x)_y=0\mbox{ for all }y\in Q_0\setminus\{x\}$. 

For each $x\in Q_0$, we let $P^x$ (resp.\ $I^x$) denote the \defi{projective cover} (resp.\ \defi{injective envelope}) of $S^x$, as constructed in \cite[Section III.2]{ass-sim-sko}. In particular, for $y\in Q_0$, the dimension of $(P^x)_y$ (resp.\ $(I^x)_y$) is given by the number of paths from $x$ to $y$ (resp.\ from $y$ to $x$), considered up to the relations in $I$.

\subsection{Borel--Weil--Bott theorem}\label{sec:bott}

In this section, we present a special case of the Borel-Weil-Bott theorem that we use in Section \ref{subsec:ring}. For more details, see \cite{jerzy}.

Throughout we denote by $\Gr$ the Grassmannian $\Gr(3,6)$. We start with the tautological sequence of locally free sheaves on $\Gr$:
\begin{equation}\label{eq:taut}
0 \to \R \to W\oo \O_{\Gr} \to \Q \to 0.
\end{equation}
Let $\a = (\a_1 \geq \a_2 \geq \a_3)$ and $\b=(\b_1 \geq \b_2 \geq \b_3)$ be (dominant) weights in $\Z^3$. We describe the cohomology of the bundle
\[\SS_\a \Q \oo \SS_\b \R.\]
Put $\ll=(\a_1,\a_2,\a_3,\b_1,\b_2,\b_3)\in \Z^6$, and let $\rho=(5,4,3,2,1,0)$. Write $\sort(\ll+\rho)$ for the sequence of integers obtained by arranging the entries of $\ll+\rho$ in nonincreasing order. Let $\tilde{\ll}=\sort(\ll+\rho)-\rho$. By the Borel-Weil-Bott theorem \cite[Chapter 4]{jerzy}, we have the following.

\begin{theorem}\label{thm:bott}
If $\ll+\rho$ has repeated entries, then $H^i(\Gr, \SS_\a \Q \oo \SS_\b \R)=0$, for all $i\geq 0$. Otherwise, if $l$ denotes the length of the (unique) permutation that sorts $\ll+\rho$, then we have
\[
H^i(\Gr, \SS_\a \Q \oo \SS_\b \R)=
\begin{cases}
\SS_{\tilde{\ll}}W & i=l;\\
0 & i\neq 0.
\end{cases}
\]
\end{theorem}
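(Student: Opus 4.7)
The plan is to realize this statement as a direct specialization of the Borel--Weil--Bott theorem in its classical form for $\GL_n$ (as presented, e.g., in \cite{jerzy}*{Chapter 4}). I will identify the vector bundle $\SS_\a \Q \oo \SS_\b \R$ on $\Gr = \Gr(3,6)$ as a homogeneous bundle on $\GL(W)/P$, where $P \subset \GL(W)$ is the maximal parabolic stabilizing a $3$-dimensional quotient. The Levi of $P$ is $\GL_3 \times \GL_3$, and since $(\a_1 \geq \a_2 \geq \a_3)$ and $(\b_1 \geq \b_2 \geq \b_3)$ are dominant for the two $\GL_3$ factors, the pair $(\a,\b)$ determines an irreducible $P$-representation with highest weight $\ll = (\a_1,\a_2,\a_3,\b_1,\b_2,\b_3)$. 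The associated equivariant bundle on $\Gr$ is precisely $\SS_\a \Q \oo \SS_\b \R$, using that $\Q$ and $\R$ correspond to the tautological quotient and sub-bundles.

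Next I would invoke the Bott algorithm: the sheaf cohomology of the homogeneous bundle attached to an integral weight $\ll$ is controlled by the $\rho$-shifted action of the Weyl group $S_6$ on $\ll$, where $\rho = (5,4,3,2,1,0)$ is half the sum of the positive roots (in the conventions of \cite{jerzy}). Concretely, there are two cases: either the vector $\ll + \rho$ has a repeated entry, in which case $\ll + \rho$ lies on a wall of some Weyl chamber and all cohomology vanishes; or the entries of $\ll + \rho$ are distinct, in which case there is a unique permutation $w \in S_6$ that sorts $\ll + \rho$ into strictly decreasing order, and then $\tilde{\ll} = \sort(\ll+\rho) - \rho$ is a strictly dominant weight for $\GL_6$, producing cohomology concentrated in the single degree equal to the length $l(w)$ of $w$, with value the irreducible $\GL_6$-representation $\SS_{\tilde\ll} W$.

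Since the paper's statement is obtained by making this dictionary explicit for $\Gr(3,6)$, no further computation is required beyond checking conventions. The only mild subtlety worth verifying is the normalization: that $\SS_\b \R$ (rather than $\SS_\b \R^*$) matches the assignment of the last three coordinates of $\ll$, and that $\rho = (5,4,3,2,1,0)$ is the correct shift for $\GL_6$ (which is immediate from $\rho_i = n - i$ for $\GL_n$). Thus the theorem follows entirely from the general Borel--Weil--Bott theorem once these identifications are in place, and the only ``obstacle'' is bookkeeping the conventions correctly; there is no genuinely hard step.
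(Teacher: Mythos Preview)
Your proposal is correct and matches the paper's approach: the paper does not prove this statement at all but simply records it as a direct specialization of the general Borel--Weil--Bott theorem, citing \cite[Chapter 4]{jerzy}. Your write-up just makes explicit the identification of $\SS_\a \Q \otimes \SS_\b \R$ with the homogeneous bundle on $\GL(W)/P$ attached to the weight $\ll=(\a,\b)$, which is exactly the content behind that citation.
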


\medskip

\section{The category of equivariant coherent $\D$-modules}\label{sec:cat}

As usual, $G=\GL(V)$ acts on $X=\bw^3 V$ with $V\cong \C^6$.

\subsection{Fundamental groups of orbits}\label{subsec:funds}

Given a $G$-orbit $O \cong G/H$ of $V$, we call the finite group $H/H^0$ the component group of $O$ (here $H^0$ denotes the connected component of $H$ containing the identity). If the orbit $O$ is simply-connected, then its component group is trivial.

We proceed by determining the fundamental groups of all orbits.

\begin{lemma}\label{lem:O}
The orbits $O_1,O_2,O_3$ are simply-connected.
\end{lemma}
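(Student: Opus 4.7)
The plan is to apply the long exact homotopy sequence of the fibration $H_k \hookrightarrow G \to G/H_k \cong O_k$, where $G = \GL_6(\C)$. Since $\pi_0(G) = 0$ and $\pi_1(G) \cong \Z$ (generated by a loop on which $\det$ has winding one), this reduces to
\[ \pi_1(H_k) \to \Z \to \pi_1(O_k) \to \pi_0(H_k) \to 0. \]
Thus it suffices to show, for each $k \in \{1, 2, 3\}$, that the stabilizer $H_k$ of the chosen representative is connected and that $\pi_1(H_k) \to \Z$ is surjective. The surjectivity will in each case be verified by exhibiting a one-parameter subgroup $\C^* \hookrightarrow H_k$ on which $\det$ has winding number $\pm 1$.

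For $O_1$ the stabilizer is the parabolic preserving $\la e_1, e_2, e_3 \ra$ with determinant one on it; its Levi $\SL_3 \times \GL_3$ is connected, and the cocharacter $\mu \mapsto \mathrm{diag}(1, 1, 1, \mu, 1, 1)$ lies in $H_1$ with $\det = \mu$. For $O_2$ the hyperplane $\la e_1, \ldots, e_5 \ra$ is canonically determined as the minimal support of $v_2$, and $\C e_1$ is pinned down as the unique kernel direction of the induced rank-four skew form on that hyperplane; hence $H_2$ lies in the parabolic of type $(1,4,1)$. A standard block-matrix computation identifies the Levi with a quotient of $\C^* \times \mathrm{GSp}_4 \times \C^*$, connected, and the $\C^*$-factor acting on $\C e_6$ gives the needed loop.

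The substantive case is $O_3$, with $v_3 = e_1 e_2 e_3 + e_1 e_4 e_5 + e_2 e_4 e_6$; this is where the main obstacle lies. The key step is to show that $U_0 := \la e_1, e_2, e_4 \ra$ is the unique $U \in \Gr(3, V)$ satisfying $v_3 \wedge \bw^2 U = 0$. I would compute the map $\omega \mapsto v_3 \wedge \omega : \bw^2 V \to \bw^5 V$ explicitly, read off its nine-dimensional kernel $K$ (which splits as $\bw^2 U_0$ together with a six-dimensional subspace of $U_0 \otimes V/U_0$ cut out by three explicit bilinear relations), and then verify via the Pl\"ucker relations that any $U \in \Gr(3, V)$ with $\bw^2 U \subset K$ must equal $U_0$---the relations force the "$V/U_0$-direction compatible with a given $u \in U_0$" to be one-dimensional and to vary injectively with $u$, so no vector of $U$ can escape $U_0$. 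This is the one place where genuine combinatorial work is required; the rest is routine. Since $v \mapsto \{U : v \wedge \bw^2 U = 0\}$ is $G$-equivariant, $H_3 \subseteq P_{U_0}$. Writing $g \in P_{U_0}$ in block form as $\left(\begin{smallmatrix} A & B \\ 0 & C \end{smallmatrix}\right)$ and using that $v_3$ corresponds to an isomorphism $\bw^2 U_0 \xrightarrow{\sim} V/U_0$, the equation $g v_3 = v_3$ decomposes into a component forcing $C$ to be a prescribed function of $A$ (in natural bases, $C = \det(A)^{-1} A$) together with a single linear constraint on $B$ depending on $A$. Thus $H_3$ is a semidirect product of a connected unipotent group with a Levi $\cong \GL_3$, hence connected; and $\det g = \det(A)^{-1}$, so the cocharacter $A = \mathrm{diag}(\lambda, 1, 1)$ yields a loop in $H_3$ with $\det = \lambda^{-1}$, of winding $-1$, establishing the required surjectivity.
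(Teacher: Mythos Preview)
Your argument is correct and rests on the same idea as the paper---the long exact homotopy sequence of the fibration $H_k \hookrightarrow G \to O_k$---but the execution differs in two ways worth noting. First, the paper works with $\SL_6$ rather than $\GL_6$: after observing that $O_1,O_2,O_3$ are already $\SL_6$-orbits, simple connectedness of $\SL_6$ reduces the claim to connectedness of the $\SL_6$-stabilizer, with no need for your additional surjectivity check $\pi_1(H_k)\to\Z$. (The two are of course equivalent: the $\GL_6$-orbit being an $\SL_6$-orbit amounts exactly to $\det:H_k\to\C^\ast$ being surjective.) Second, the paper cites Igusa \cite{igusa} for the explicit stabilizer descriptions rather than computing them by hand; your direct determination---especially for $O_3$---is more self-contained but substantially longer.

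One remark on the $O_3$ case: the ``key step'' you sketch, that $U_0=\langle e_1,e_2,e_4\rangle$ is the \emph{unique} $3$-plane with $v_3\wedge\bw^2 U=0$, is precisely the statement that the desingularization $q:Z\to\ol{O_3}$ used later in Section~\ref{subsec:ring} is one-to-one over $O_3$. Indeed, the condition $v\wedge\bw^2 U=0$ is equivalent to $v\in\bw^3 U\oplus(\bw^2 U\otimes V/U)$, which is the fiber of $\eta^\ast$ over $U$; so your incidence variety is exactly $Z$, and uniqueness of $U_0$ follows from birationality of $q$. This gives a cleaner route than the Pl\"ucker-relation computation you outline.
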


\begin{proof}
It is easy to see that the $\GL_6$-orbits $O_1,O_2,O_3$ are also $\SL_6$-orbits. The generic $\GL_6$-stabilizers for these orbits have been computed in \cite[Section 10.4]{igusa}. From these descriptions we see that the $\SL_6$-stabilizers are connected. Since $\SL_6$ is connected and simply-connected, the claim now follows from the long exact sequence of homotopy groups obtained from the fibration $\SL_6\to O_i$ ($i=1,2,3$).
\end{proof}

\begin{remark}
The result above for $O_1$ follows also by \cite[Lemma 4.13]{catdmod} since $O_1$ is the orbit of the highest weight vector. The result for $O_2$ follows also using the desingularization $Z\to \ol{O_2}$ considered in \cite[Proposition 7.3.8]{jerzy}, and noticing that $Z \setminus O_2$ has codimension $\geq 2$.
\end{remark}

\begin{lemma}\label{lem:ratsing}
The variety $\ol{O_3}$ is normal with rational singularities.
\end{lemma}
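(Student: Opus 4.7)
For normality, I would note that $\ol{O_3} = V(f)$ is a hypersurface in the smooth affine space $X$, hence Cohen--Macaulay, giving Serre's condition $S_2$. The singular locus of $\ol{O_3}$ is closed and $G$-stable, and since $\ol{O_3}$ is smooth along its dense orbit $O_3$, it lies in $\ol{O_3}\setminus O_3 = \ol{O_2}$. Since $\dim \ol{O_2} = 15$ while $\dim \ol{O_3} = 19$, the codimension is $\geq 4$, giving condition $R_1$, and Serre's criterion yields normality.

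For rational singularities, I would construct a resolution $\pi \colon Z \to \ol{O_3}$ as follows. On $\Gr = \Gr(3, V)$, let $\E \subset \bw^3 V \oo \O_\Gr$ be the subbundle whose fiber at $[U]$ is $\bw^2 U \wedge V$, i.e., the embedded tangent space to the affine cone over the Pl\"{u}cker-embedded Grassmannian; it fits in $0 \to \bw^3 \R \to \E \to \bw^2 \R \oo \Q \to 0$, of rank $10$. Let $Z$ be its total space (smooth of dimension $9+10=19=\dim\ol{O_3}$), and $\pi \colon Z \to X$ the second projection. Then $\pi$ is proper and surjects onto the tangential variety $\ol{O_3}$. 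To verify birationality, I would analyze the fiber over the orbit representative $T_0 = e_1\wedge e_2\wedge e_3 + e_1\wedge e_4\wedge e_5 + e_2\wedge e_4\wedge e_6 \in O_3$: any $[U]$ with $T_0 \in \bw^2 U \wedge V$ must satisfy $\iota_\varphi T_0 \wedge \iota_\varphi T_0 = 0$ for all $\varphi \in U^\perp$ (since $\iota_\varphi T_0 \in \bw^2 U$ is a $2$-form on a $3$-dimensional space). A direct quadratic computation identifies this zero locus in $V^*$ as $\la e_3^*, e_5^*, e_6^* \ra$, pinning down $U = \la e_1, e_2, e_4 \ra$ uniquely.

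With birationality and normality in hand, $\pi_* \O_Z = \O_{\ol{O_3}}$ by Zariski's Main Theorem, so rational singularities reduces to $R^i \pi_* \O_Z = 0$ for $i > 0$; equivalently, $H^i(\Gr, \Sym^n \E^*) = 0$ for all $i > 0$ and $n \geq 0$. Dualizing the defining sequence of $\E$ and filtering $\Sym^n \E^*$ accordingly, the graded pieces take the form $\Sym^a(\bw^2 \R^* \oo \Q^*) \oo (\det \R^*)^{\oo b}$ with $a+b=n$, which by Cauchy's formula decompose into Schur--Weyl bundles $\SS_\a \Q^* \oo \SS_\b \R^*$. The hard part will be applying Borel--Weil--Bott (Theorem~\ref{thm:bott}) to verify that for each weight $(\a,\b)$ arising, $\sort((\a,\b)+\rho)$ either has repeated entries (forcing vanishing) or is already dominant (so cohomology lives only in degree $0$). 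An alternative route sidestepping this bookkeeping is Saito's theorem on minimal exponents: the reduced $b$-function $b_f(s)/(s+1)=(s+5/2)(s+7/2)(s+5)$ has minimal exponent $5/2>1$, which forces $V(f)=\ol{O_3}$ to have rational singularities.
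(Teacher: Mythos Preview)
Your ``alternative route'' at the very end is exactly the paper's proof: the paper simply observes that $b_f(s)/(s+1)$ has no roots $\geq -1$ and invokes Saito's criterion \cite[Theorem~0.4]{saito} to conclude rational singularities (normality being part of that package). So the one-line route you mention in passing is the route the paper takes.

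Your main plan via the explicit resolution is interesting because it is precisely the desingularization the paper introduces later in Section~\ref{subsec:ring} (your $\E$ is the paper's $\eta^\ast$, up to identifying $\Gr(3,V)$ with $\Gr(3,W)$). But the logical flow is reversed: the paper \emph{uses} rational singularities of $\ol{O_3}$ to deduce the vanishing $H^i(\Gr,\Sym\eta)=0$ for $i>0$ (Lemma~\ref{lem:nonsplit}), whereas you propose to prove that vanishing directly via Bott and then conclude rational singularities. That reversal is legitimate in principle, but your expectation that for every Schur piece of the associated graded ``$\sort((\a,\b)+\rho)$ either has repeated entries or is already dominant'' is not correct. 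As the proof of Lemma~\ref{lem:nonsplit} makes explicit, the Cauchy summand in $\Sym_k(\bw^2\Q\oo\R)$ corresponding to a partition $(\ll_1,\ll_2,\ll_3)$ with $\ll_1\geq \ll_2+\ll_3+2$ contributes genuine $H^1$, so the associated graded \emph{does} have higher cohomology; what saves the day is that the connecting maps coming from the nonsplit extension~(\ref{eq:eta}) kill it. Carrying out your plan would therefore require tracking those connecting homomorphisms, not just a term-by-term Bott check. This is doable but is real work, and is exactly the kind of bookkeeping the Saito argument bypasses.

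Your normality argument via Serre's criterion is fine and more explicit than the paper's (which absorbs normality into the Saito statement).
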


\begin{proof}
Let $f\in S$ be the semi-invariant introduced in Section \ref{sec:intro}. By (\ref{eq:roots}) the polynomial $b_f(s)/(s+1)$ has no roots $\geq -1$. Thus, by \cite[Theorem 0.4]{saito} the variety $\ol{O_3}=V(f)$ has rational singularities.
\end{proof}

\begin{lemma}\label{lem:O4}
The fundamental group of $O_4$ is $\Z$ and its component group is $\Z/2\Z$.
\end{lemma}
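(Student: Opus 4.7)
The plan is to first identify the stabilizer $H \sub \GL_6$ of the representative $\omega := e_1 \wedge e_2 \wedge e_3 + e_4 \wedge e_5 \wedge e_6$ of $O_4$, and then to realize the semi-invariant $f : O_4 \to \C^*$ as a locally trivial fibration with simply connected fibers. Setting $V_1 := \la e_1, e_2, e_3 \ra$ and $V_2 := \la e_4, e_5, e_6 \ra$, both the block-diagonal subgroup $\SL(V_1) \times \SL(V_2) \cong \SL_3 \times \SL_3$ and the swap $\sigma : e_i \leftrightarrow e_{i+3}$ fix $\omega$. By dimension, $\dim H = \dim \GL_6 - \dim O_4 = 16 = \dim(\SL_3 \times \SL_3)$, so $H^0 = \SL_3 \times \SL_3$. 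The classical computation of the generic stabilizer for the prehomogeneous vector space $\bw^3 \C^6$ (see \cite[Section 10.4]{igusa}) then gives $H = (\SL_3 \times \SL_3) \rtimes \la \sigma \ra$, whence $H/H^0 = \Z/2\Z$, the asserted component group.

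For the fundamental group, I would exploit that $f$ has weight $(2^6)$, so $f(g\omega) = \det(g)^2 f(\omega)$; thus $f$ is $\SL_6$-invariant and nonvanishing on $O_4$. Since $\det \sigma = -1$ implies $\sigma \notin \SL_6$, we have $H \cap \SL_6 = \SL_3 \times \SL_3$. Moreover each fiber of $f$ is a single $\SL_6$-orbit: given $v, v'$ in the same fiber, $v' = gv$ for some $g \in \GL_6$ with $\det(g)^2 = 1$, and in the case $\det(g) = -1$ one replaces $g$ by $g\sigma \in \SL_6$. Each fiber is therefore homeomorphic to $F := \SL_6 / (\SL_3 \times \SL_3)$.

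I would establish local triviality of $f$ by exploiting the scalar action: over any simply connected open $U \sub \C^*$ containing $c := f(\omega)$, the map $s: U \to O_4$, $z \mapsto (z/c)^{1/12} \omega$ (with a fixed single-valued branch of the twelfth root on $U$) is a section of $f$, and the resulting map $F \times U \to f^{-1}(U)$, $(gH^0, z) \mapsto g \cdot s(z)$, is a homeomorphism. Since $\SL_6$ is simply connected and $\SL_3 \times \SL_3$ is connected, the homotopy long exact sequence of the fibration $\SL_3 \times \SL_3 \to \SL_6 \to F$ gives $\pi_1(F) = 0$; applying it now to the fibration $F \to O_4 \to \C^*$ yields
\[
\pi_1(O_4) \cong \pi_1(\C^*) = \Z.
\]

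The main obstacle is the identification $H = (\SL_3 \times \SL_3) \rtimes \la \sigma \ra$: one must rule out connected components of $H$ beyond those obtained by preserving or swapping the pair $\{V_1, V_2\}$. This reduces to the classical uniqueness (up to scaling and ordering) of the rank-$2$ decomposition $\omega = \omega_1 + \omega_2$ into decomposable $3$-forms, which is part of the standard structure theory of the prehomogeneous vector space $\bw^3 \C^6$.
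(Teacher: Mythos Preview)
Your argument is correct and complete. For the component group you do essentially what the paper does---both of you invoke Igusa's explicit computation of the generic stabilizer. For $\pi_1(O_4)\cong\Z$, however, the approaches diverge. The paper appeals to a general lemma from \cite{catdmod} (their Lemma~4.11), which, combined with the rational singularities of $\ol{O_3}$ established in Lemma~\ref{lem:ratsing}, yields the result without ever looking inside $O_4$. Your argument is instead a hands-on computation: you exhibit $f:O_4\to\C^*$ as a locally trivial fibration with fiber $\SL_6/(\SL_3\times\SL_3)$, prove that fiber is simply connected, and read off $\pi_1(O_4)$ from the homotopy long exact sequence. The paper's route is shorter and more conceptual, tying the fundamental group to the singularity theory of the discriminant; yours is self-contained and elementary, requiring no input beyond Igusa's stabilizer computation and basic covering-space theory. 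One small slip: since $f$ is a quartic on $X=\bw^3 V$, your section should be $s(z)=(z/c)^{1/4}\omega$ rather than $(z/c)^{1/12}\omega$ (the exponent $1/12$ would be correct only if you were acting by the scalar matrix $(z/c)^{1/12}I_6\in\GL_6$, which multiplies $\omega$ by the cube of that scalar). This does not affect the argument.
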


\begin{proof}
The first assertion follows from \cite[Lemma 4.11]{catdmod} and Lemma \ref{lem:ratsing}. Note that the generic stabilizer is computed explicitly in \cite[Section 10.4]{igusa} as well as in \cite[Section 5]{saki}.
\end{proof}

\subsection{Classification of the simple equivariant $\D$-modules and the quiver structure of $\opmod_{G}(\D_X)$}

Recall the semi-invariant $f\in S$ introduced in Section \ref{sec:intro}. We determine the filtrations of $S_f$ and $S_f\cdot \sqrt{f}$ using the $b$-function of $f$. From this we obtain the explicit construction of each simple equivariant $\D$-module. We conclude the section by proving the Theorem on Quiver Structure.

We have the following filtrations of $\D$-modules in $S_f$ and $S_f\cdot \sqrt{f}$:

\begin{equation}\label{eq:filtrations}
0\sub S \sub \D f^{-1} \sub \D f^{-5}, \;\;\; 0\sub \D f^{-3/2}\sub \D f^{-5/2} \sub \D f^{-7/2}.
\end{equation}

We show now that the $6$ successive quotients above give all the (isomorphism classes of) simple equivariant $\D$-modules. The $6$ successive quotients are nonzero according to \cite[Proposition 4.9]{catdmod}, as from (\ref{eq:roots}) the roots of the $b$-function of $f$ are $-1$, $-5/2$, $-7/2$, and $-5$. Moreover, these $6$  $\D$-modules have unique simple quotients (again by \cite[Proposition 4.9]{catdmod}), which are nonisomorphic since they have different semi-invariant weights. But from Section \ref{subsec:funds} we deduce that there are in total six nonisomorphic simple equivariant $\D$-modules (see \cite[Remark 11.6.2]{hot-tak-tan}. Since each semi-invariant weight occurs in $S_f$ or $S_{f}\cdot \sqrt{f}$ with multiplicity one, this shows that the $6$ successive quotients are in fact all the simple $\D$-modules, and we have determined the composition series of $S_f$ and $S_{f}\cdot \sqrt{f}$, both of which have holonomic length $3$.

Given a $\D$-module $M$, we write $\charC(M)$ for its characteristic cycle (see \cite{kashi}), which is a formal linear combination of the irreducible components of its characteristic variety counted with multiplicities.

\begin{theorem}\label{thm:simples}
The (isomorphism classes) of simple equivariant $\D$-modules are given as follows:
\begin{itemize}
\item[(0)] The simple supported on $O_0$ is $E\cong \D f^{-5}/ \D f^{-1}$, and $\charC(E)=[\ol{T^\ast_{O_0} V}]$.
\item[(1)] The simple supported on $\ol{O_1}$ is $D_1\cong \D f^{-7/2}/\D f^{-5/2}$, and $\charC(D_1)=[\ol{T^\ast_{O_1}V}]+[\ol{T^\ast_{O_0} V}]$.
\item[(2)] The simple supported on $\ol{O_2}$ is $D_2\cong \D f^{-5/2}/\D f^{-3/2}$, and $\charC(D_2)=[\ol{T^\ast_{O_2}V}]$.
\item[(3)] The simple supported on $\ol{O_3}$ is $D_3\cong \D f^{-1}/S$, and $\charC(D_3)=[\ol{T^\ast_{O_3}V}]+[\ol{T^\ast_{O_2}V}]+[\ol{T^\ast_{O_1} V}]$.
\item[(4)] The simples supported on $\ol{O_4}$ are $S=\C[V]$ and $B_4\cong \D f^{-3/2}$ with $\charC(B_4)=[\ol{T^\ast_{O_4}V}]+[\ol{T^\ast_{O_3} V}]$.
\end{itemize}

\end{theorem}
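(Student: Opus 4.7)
The preceding discussion establishes that the six successive quotients in (\ref{eq:filtrations}) are precisely the six isomorphism classes of simple equivariant $\D$-modules. The plan is to identify each quotient with its label from the classification by matching distinguished semi-invariant weights, and then to verify supports. Each quotient $\D f^{-s_1}/\D f^{-s_2}$ inherits a canonical semi-invariant, the image of the outer generator $f^{-s_1}$, of weight $(-2s_1)^6$, yielding six distinct weights $(0^6), (-2^6), (-10^6), (-3^6), (-5^6), (-7^6)$. For supports: $S=\D f^0$ and $\D f^{-3/2}$ are generated by functions nonvanishing on $O_4 = X\setminus V(f)$ and thus have full support $\ol{O_4}$, so they must be $S$ and $B_4$. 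Each of the remaining four quotients vanishes on $O_4$ (both generators define the same rank-one connection there), hence is supported in $V(f)=\ol{O_3}$. For the integer series, $E=\F(S)$ is supported at the origin by its explicit character (\ref{eq:charE}), forcing $\D f^{-5}/\D f^{-1}=E$ and therefore $\D f^{-1}/S=D_3$. For the half-integer series, the Fourier pairing $\F(B_4)=D_1$ together with the Fourier action on weights ($(-3^6)\mapsto (3^6)+(-10^6)=(-7^6)$) pins down $\D f^{-7/2}/\D f^{-5/2}=D_1$, and by elimination $\D f^{-5/2}/\D f^{-3/2}=D_2$.

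For the characteristic cycles, one has immediately $\charC(S)=[\ol{T^\ast_{O_4}V}]$ (the zero section, as $S=\O_X$) and $\charC(E)=[\ol{T^\ast_{O_0}V}]$ (simple supported only at the origin with length one). The Fourier transform acts on conormal cycles by an involution $\sigma$ which, by $\F(S)=E$ and the generic multiplicity-one constraint, swaps $\ol{T^\ast_{O_k}V}\leftrightarrow \ol{T^\ast_{O_{4-k}}V}$. Combining $\sigma$-invariance with support constraints and generic multiplicity one (equal to the rank of the corresponding local system) narrows the remaining cycles to the form
\[
\charC(D_2)=[\ol{T^\ast_{O_2}V}], \quad \charC(D_3)=[\ol{T^\ast_{O_3}V}] + b\,[\ol{T^\ast_{O_2}V}] + [\ol{T^\ast_{O_1}V}],
\]
\[
\charC(B_4)=[\ol{T^\ast_{O_4}V}] + a\,[\ol{T^\ast_{O_3}V}], \quad \charC(D_1)=a\,[\ol{T^\ast_{O_1}V}] + [\ol{T^\ast_{O_0}V}],
\]
for some $a,b\in\Z_{\geq 0}$. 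Indeed, for $D_2$ any lower-stratum contribution would $\sigma$-swap outside $\ol{O_2}$; for $D_3$ the $\ol{T^\ast_{O_3}V}$ and $\ol{T^\ast_{O_1}V}$ coefficients coincide by $\sigma$-invariance; and $\charC(B_4)$ and $\charC(D_1)$ are $\sigma$-paired as displayed.

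The main obstacle is pinning down $a=b=1$. The plan is to use additivity on each filtration to express $\charC(S_f)=\charC(S)+\charC(D_3)+\charC(E)$ and $\charC(S_f\cdot\sqrt{f})=\charC(B_4)+\charC(D_2)+\charC(D_1)$, and then to compute each total characteristic cycle independently. One route is the Dubson--Kashiwara index theorem, which expresses the multiplicity of $\ol{T^\ast_{O_k}V}$ in the characteristic cycle of the $\D$-module direct image $j_+\L$ (for $\L$ the relevant rank-one local system on $O_4$) as an Euler characteristic of the $\L$-twisted Milnor fiber of $f$ at a generic point of $O_k$; for the alternating senary 3-tensor representation, the transverse slice at such a point inherits a smaller prehomogeneous structure which makes these Milnor fibers tractable. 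A more direct alternative, given the framework of this paper, is to extract $a$ and $b$ from the leading-order data of the $G$-characters computed in Section \ref{sec:character}, or to read them off a posteriori from the path-space dimensions once the quiver structure of the Theorem on Quiver Structure is established.
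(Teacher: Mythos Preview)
Your identification of the simples has a circularity problem. You invoke the Fourier pairing $\F(B_4)=D_1$ to pin down which half-integer quotient is $D_1$, but that pairing is not available to you at this point: in the paper it is \emph{derived} inside the proof of this theorem, not assumed. The paper's route is to first show $\F(D_3)\cong D_3$ by a weight check (the weight $(-8^6)$ appears only in $D_3$), and then to argue by contradiction on the characteristic variety of $D_1$: if $[\ol{T^\ast_{O_0}V}]$ were absent from $\charC(D_1)$, then $\F(D_1)$ would be a simple supported on $\ol{O_3}$, forcing $\F(D_1)=D_3$ and contradicting $\F(D_3)=D_3$. Only after this does one conclude $\F(D_1)\cong B_4$, and hence $D_1\cong\D f^{-7/2}/\D f^{-5/2}$. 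Your argument reverses cause and effect here. (Relatedly, your claim that $\sigma$ swaps $\ol{T^\ast_{O_k}V}\leftrightarrow\ol{T^\ast_{O_{4-k}}V}$ does not follow from $\F(S)=E$ alone; the nontrivial input is that $\ol{O_3}$ and $\ol{O_1}$ are projectively dual, which the paper cites from \cite{knopmenzel}.)

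Your parameter $a$ is illusory: once you grant $\F(B_4)=D_1$ together with generic multiplicity one for $D_1$ along $[\ol{T^\ast_{O_1}V}]$ and for $B_4$ along $[\ol{T^\ast_{O_4}V}]$, the $\sigma$-pairing forces $a=1$ immediately. So the only genuine unknown is $b$, the multiplicity of $[\ol{T^\ast_{O_2}V}]$ in $\charC(D_3)$, and here your proposal stops short of an argument. The paper settles $b=1$ in two steps: (i) $b\geq 1$ because $\ol{T^\ast_{O_3}V}\cap\ol{T^\ast_{O_1}V}$ has codimension $\geq 2$ (no edge in the holonomy diagram \cite[Section 8]{kimu}), so by the connectedness criterion of \cite[Theorem 6.7]{macvil} the component $\ol{T^\ast_{O_2}V}$ must be present; (ii) $b\leq 1$ via the projective cover of $D_3$ in $\opmod_G(\D_X)$ and the bound $\operatorname{mult}_Z\mathcal{O}_{\mu^{-1}(0)}=1$ from \cite[Lemma 3.12]{catdmod}. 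Your suggested alternatives are either not carried out (Dubson--Kashiwara) or circular: both the character formulas of Section~\ref{sec:character} and the Theorem on Quiver Structure are proved \emph{after} and \emph{using} Theorem~\ref{thm:simples}.
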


\begin{proof}
The claim for $E$ follows from (\ref{eq:charE}). To show that $D_3\cong \D f^{-1}/S$, we note that $D_3$ is the unique simple $\D$-submodule of the local cohomology $H^1_f(S)\cong S_f/S$. Since $\D f^{-1}/S$ is such a simple, it must be isomorphic to $D_3$.

By (\ref{eq:fourier}), the Fourier transform $\F(D_3)$ has a weight $(-8)^6$. Since $D_3$ is the only simple having such a weight (corresponding to $f^{-4}$), we must have $\F(D_3)\cong D_3$.

Clearly, the simple $\D f^{-3/2}$ has full support $V$, hence we must have $B_4 \cong \D f^{-3/2}$.

We recall some facts on the interplay between projective duality and Fourier transform (for more details, see \cite[Section 4.3]{catdmod}). Since $G$ is reductive, acting on $X$ with finitely many orbits, projective duality induces an involution on the set of orbits (by identifying $X$ with $X^*$). Moreover, the irreducible components of the characteristic variety of an equivariant $\D$-module are closures of conormal bundles to orbits, and the Fourier transform interchanges these components by way of projective duality. 

It is known that the hypersurface $\ol{O_3}$ is the projective dual to $\ol{O_1}$ (see \cite{knopmenzel}), and so $\ol{O_2}$ is self-dual. Assume by contradiction that $\ol{T^\ast_{O_0} V}$ is not an irreducible component of the characteristic variety of $D_1$. Then $\F(D_1)$ is a simple equivariant $\D$-module supported on $\ol{O_3}$ (see \cite[Section 4.3]{catdmod}), hence it must be $D_3$, contradicting $\F(D_3)\cong D_3$. Hence, $\ol{T^\ast_{O_0} V}$ is a component of the characteristic variety of $D_1$. Then $\F(D_1)$ has full support $X$ (again by \cite[Section 4.3]{catdmod}), so $\F(D_1)\cong B_4$, and also $\F(D_2)\cong D_2$.

Since $D_1\cong\F(B_4)$ has a weight $(-7^6)$, we have  $D_1\cong \D f^{-7/2}/\D f^{-5/2}$, and then $D_2\cong \D f^{-5/2}/\D f^{-3/2}$.

All simples come from local systems of rank one. Hence, to finish the description of the characteristic cycles, we are left to show that $[\ol{T^\ast_{O_2}V}]$ appears in $\charC(D_3)$ with multiplicity one. For the remainder of the proof, put $Z=\ol{T^\ast_{O_2}V}$.

We know that $\ol{T^\ast_{O_3}V}$ and $\ol{T^\ast_{O_1} V}$ are components of the characteristic variety of $D_3$ (since $\F(D_3) \cong D_3$), and their intersection has codimension $\geq$ 2, since there is no edge between them in the holonomy diagram  \cite[Figure 8-1]{kimu}. It follows by \cite[Theorem 6.7]{macvil} that $Z$ must be a component of characteristic variety of $D_3$.

Now we show that the multiplicity of $Z$ in $\charC(D_3)$ is at most one. Consider the projective cover $P=P(\lambda)$ of $D_3$ in the category $\opmod_G(\D_X)$, with $\lambda=-2^6$, where we use the notation as in \cite[Lemma 2.1]{catdmod}. Let $\mu: T^* X \to \mathfrak{g}^*$ be the moment map and denote by $\operatorname{mult}_Z \mathcal{O}_{\mu^{-1}(0)}$ the multiplicity of $Z$ along the scheme-theoretic zero-fiber (see \cite[Section 3.1]{catdmod} for more details). It is enough to show that the multiplicity of $Z$ in $\charC(P)$ is at most one. By the proof of \cite[Proposition 3.14]{catdmod}, the latter multiplicity is bounded above by $\operatorname{mult}_Z \mathcal{O}_{\mu^{-1}(0)}$. By \cite[Section 8]{kimu}, the variety $Z$ has a dense $G$-orbit. Hence, by \cite[Lemma 3.12]{catdmod} we get $\operatorname{mult}_Z \mathcal{O}_{\mu^{-1}(0)}=1$, finishing the proof.
\end{proof}

\begin{remark} \label{rem:localb}
One can also obtain the supports of the simples by computing the \textit{local} $b$-functions of $f$ at each orbit. These can be computed for example as outlined in \cite[Example 2.12]{bub} and it also follows from the description of the holonomy diagram in \cite[Section 8]{kimu}. For example, if $v_{(2,1,1,1,1,0)}$ denotes (up to a nonzero constant) the polynomial of highest weight $(2,1,1,1,1,0)$, and $\partial_{(0,-1,-1,-1,-1,-2)}$ denotes the constant differential operator of highest weight $(0,-1,-1,-1,-1,-2)$, then we have (see \cite[Theorem 2.5]{bub})
\[\partial_{(0,-1,-1,-1,-1,-2)} \cdot f^{s+1} = (s+1)(s+5/2) \cdot v_{(2,1,1,1,1,0)} f^s.\]
\end{remark}

\vspace{0.1in}

We note that for each $G$-orbit $O$, the irreducible representations of the component group of $O$ are self-dual. This implies that the holonomic duality functor fixes all the simple equivariant $\D$-modules. Thus, the Classification Theorem stated in Section \ref{sec:intro} follows. We proceed with proving the next claim from Section \ref{sec:intro}:

\begin{proof}
[Proof of Theorem on Quiver Structure]

By \cite[Lemma 2.4]{bindmod} we have that $S_f$ is the injective hull of $S$ and $S_f\cdot \sqrt{f}$ is the injective hull of $B_4$ in the category  $\opmod_G(\D_X)$. From the filtrations (\ref{eq:filtrations}) of $S_f$ and $S_f\cdot \sqrt{f}$, we obtain extensions between the simples in $\opmod_G(\D_X)$. Moreover, these extensions are nonsplit, as it follows by \cite[Lemma 2.8]{catdmod} (applied to the respective semi-invariant weights). This yields all the nontrivial paths in $(Q,I)$ going into $s$ and $b_4$, obtaining the arrows $\b_0$, $\b_1$, $\d_0$, $\d_1$, and that the paths $\b_1 \b_0$ and $\d_1\d_0$ are nonzero (i.e., not in $I$). Applying the holonomic duality functor $\bb{D}$, we obtain the arrows $\a_0$, $\a_1$, $\g_0$, and $\g_1$. Applying the Fourier transform, we see that there are no other arrows adjacent to the vertices $s,e,b_4,d_1$. Moreover, by \cite[Corollary 3.9]{catdmod} together with Fourier transform, we see that all 2-cycles must be zero.

We are left to show that there are no arrows between $d_2$ and $d_3$. Since $H^1_f(S)=S_f/S$ is the injective hull of $D_3$ in the category  $\opmod_G^{\ol{O_3}}(\D_X)$ (see \cite[Lemma 3.11]{catdmod}) of equivariant $\D$-modules supported on $\ol{O_3}$, and it does not have $D_2$ as a composition factor, we see that there are no arrows from $d_2$ to $d_3$. By holonomic duality $\bb{D}$, there are no arrows from $d_3$ to $d_2$ either, finishing the proof.
\end{proof}

\section{Characters of equivariant $\D$-modules}
\label{sec:character}

Since $G$ acts on $X$ with finitely many orbits, for an equivariant coherent $\D$-module $M$ the $G$-representation $[M]$ is admissible by \cite[Proposition 3.14]{catdmod}. In this section, we describe explicitly the $G$-module structure of all the simple equivariant $\D$-modules. 

The characters of $S$ and $E$ are given by the rational expressions (\ref{eq:charS}) and (\ref{eq:charE}), respectively. The character of $S_f$ is given by
\begin{equation}\label{eq:charSf}
[S_f] = \lim_{n\to \infty} [f^{-n}\cdot S] = \frac{1}{(1-e^{(1,1,1,0,0,0)})(1-e^{(2,1,1,1,1,0)})(1-e^{(2,2,2,1,1,1)})(1-e^{(3,3,2,2,1,1)})} \cdot e^{(2,2,2,2,2,2) \Z},
\end{equation}
where $e^{(2,2,2,2,2,2) \Z}=\sum_{i\in \Z} e^{(2i^6)}$. Hence, we obtain the character of $D_3$ from $[D_3]=[S_f]-[S]-[E]$.

Similarly, from $\F(D_1) \cong B_4$ we have by (\ref{eq:fourier}) the relation $[D_1]=[B_4]^\ast\cdot e^{(-10^6)}$. Clearly, $[S_f \cdot \sqrt{f}]= [S_f] \cdot e^{(1^6)}$ and $[D_2]=[S_f  \cdot \sqrt{f}]-[B_4]-[D_1]$. Hence, to complete the description of the characters of the simple equivariant $\D$-modules, it suffices to find the character of $B_4$.

\begin{theorem}\label{thm:charB4}
The $\GL_6$-character of $B_4$ is given by
\[[B_4] =\frac{e^{(-3^6)}}{(1-e^{(0,0,0,-1,-1,-1)})(1-e^{(1,1,0,0,-1,-1)})(1-e^{(1,1,1,0,0,0)})(1-e^{(2,1,1,1,1,0)})(1-e^{(2^6)})}.\]
\end{theorem}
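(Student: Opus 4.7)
The plan is to pin down $[B_4]$ by describing the space of $U$-invariants $B_4^U$ explicitly. For any admissible $G$-module $M$, the identity $[M]=\sum_\ll \dim(M^U)_\ll\cdot e^\ll$ recovers the Schur character from the weight-character of $M^U$. By Brion's theorem, $S^U=\C[v_1,\ldots,v_5]$ is polynomial with $v_4=f$, and since $B_4\subseteq S_f\sqrt{f}$,
\[B_4^U\subseteq (S_f\sqrt{f})^U=\C[v_1,v_2,v_3,v_5,f,f^{-1}]\cdot\sqrt{f},\]
which has $\C$-basis $\{v_1^{a_1}v_2^{a_2}v_3^{a_3}v_5^{a_5}f^k : a_i\in\Z_{\ge 0},\,k\in\Z+\tfrac12\}$.

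The key claim is that such a monomial lies in $B_4^U$ if and only if $k+a_3+a_5\ge -\tfrac32$; equivalently,
\[B_4^U=\C[v_1,v_2,f,\,v_3f^{-1},\,v_5f^{-1}]\cdot f^{-3/2}\]
is a free rank-one module over the displayed polynomial ring. Noting that $v_3f^{-1}$ has weight $(0,0,0,-1,-1,-1)$ and $v_5f^{-1}$ has weight $(1,1,0,0,-1,-1)$, the weight-generating function of this basis, shifted by $(-3^6)$ coming from $f^{-3/2}$, matches the stated character formula.

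For the inclusion ``$\supseteq$'', monomials with $a_3=a_5=0$ are immediate since $v_1,v_2,f\in S\subset\D$. The subtle shifted generators $v_3f^{-5/2}$ and $v_5f^{-5/2}$ arise via Capelli-type identities generalizing Remark~\ref{rem:localb}: for each $i\in\{3,5\}$ one constructs a homogeneous constant-coefficient operator $\partial^{(i)}\in\D$ of weight $v_i-f$ with $\partial^{(i)}\cdot f^{s+1}=b_i(s)\cdot v_if^s$, where $b_i(s)\in\C[s]$ has linear factors dictated by the holonomy diagram in \cite[Section 8]{kimu}, and crucially $b_3(-5/2),b_5(-5/2)\neq 0$, placing these elements in $\D f^{-3/2}=B_4$ upon evaluating at $s=-5/2$. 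Iterating these identities produces every basis monomial in the claimed set. For the reverse inclusion ``$\subseteq$'', the filtration $B_4\subsetneq\D f^{-5/2}\subsetneq S_f\sqrt{f}$ with quotients $D_2,D_1$ shows that any $U$-invariant violating $k+a_3+a_5\ge -\tfrac32$ must contribute to $D_1$ or $D_2$; combined with the Fourier identities $[D_1]=[B_4]^\ast e^{(-10^6)}$, $[D_2]=[D_2]^\ast e^{(-10^6)}$ and $[S_f\sqrt{f}]=[B_4]+[D_1]+[D_2]$, this uniquely pins down $B_4^U$ as the claimed set.

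The main obstacle is establishing the Capelli-type identities for $v_3$ and $v_5$ beyond the one in Remark~\ref{rem:localb} for $v_2$, and bookkeeping precisely which monomial $U$-invariants land in $D_1$ versus $D_2$ under the filtration.
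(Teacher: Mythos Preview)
Your approach is essentially the same as the paper's: both aim to describe exactly which highest-weight monomials $v_1^{a_1}v_2^{a_2}v_3^{a_3}v_5^{a_5}f^k$ lie in $B_4$, and the answer $a_3+a_5+k\ge -3/2$ is the correct one. The paper packages this via the ideals $\tilde I_k=(B_4:f^{-3/2-k})/(f)\subset S/(f)$ and shows $\tilde I_k$ consists precisely of the weights with $m+n\ge k$ (here $m=a_3$, $n=a_5$). However, your execution has a genuine gap in each direction.

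\textbf{The inclusion ``$\supseteq$'' for $v_5$.} There is no constant-coefficient operator of weight $v_5-f=(1,1,0,0,-1,-1)$: every monomial in the $\partial_{ijk}$ has all weight entries $\le 0$. So the Capelli identity you posit for $i=5$ cannot exist in the stated form. The paper circumvents this entirely: it first uses the identity of Remark~\ref{rem:localb} at $s=-5/2$ to see that the ideal $(V_{(0,-1^4,-2)})\subset S^*$ annihilates $f^{-3/2}$, whence $B_4$ is spanned over $S$ by $\{\partial_1^k f^{-3/2}:k\ge 0\}$ with $\partial_1=\partial^{(3)}$. This already gives $v_3^kf^{-3/2-k}\in B_4$. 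To get $v_5f^{-5/2}$, the paper does \emph{not} use a Capelli identity but rather the multiplication surjection $\SS_{(1,1,1,0,0,0)}\otimes\SS_{(2,2,2,1,1,1)}\to\SS_{(3,3,2,2,1,1)}$ inside $S$, which places the entire $v_5$-isotypic piece in the ideal $I_1$ generated by the $v_3$-irreducible. Your identity for $i=3$ is fine (indeed $b_3(s)=c(s+1)$), but for $i=5$ you need this algebraic input, not a differential one.

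\textbf{The inclusion ``$\subseteq$''.} The Fourier constraints you list do not pin down $[B_4]$. Writing $[B_4]=[A]+[X]$ with $[A]$ your claimed lower bound and $[X]\ge 0$, the relation $[D_1]=[B_4]^*e^{(-10^6)}$ gives $[D_2]=C-[X]-[X]^*e^{(-10^6)}$ with $C=[S_f\sqrt f]-[A]-[A]^*e^{(-10^6)}$. The Fourier self-duality $[D_2]=[D_2]^*e^{(-10^6)}$ then reduces to $[S_f\sqrt f]=[S_f\sqrt f]^*e^{(-10^6)}$, which is automatic and imposes no condition on $[X]$. The paper's argument here is elementary and direct: $\tilde I_k$ is generated (as an ideal of the multiplicity-free ring $S/(f)$) by the single irreducible of highest weight $(2k,2k,2k,k,k,k)$, so every weight appearing in $\tilde I_k$ has last entry $\ge k$; but the last entry of the weight of $v_1^a v_2^b v_3^m v_5^n$ is $m+n$, forcing $m+n\ge k$. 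Translating back, this is exactly your inequality $k+a_3+a_5\ge -3/2$.
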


In order to prove the theorem above, we begin with some preliminary results. In the following lemma, $G$ can be any reductive group acting on a vector space $X$, and $h\in S=\C[X]$ is a semi-invariant of weight $\sigma$.

\begin{lemma}\label{lem:charDf}
Let $q \in \bb{Q}$ such that $\sigma^q$ is an algebraic character, and consider the $G$-equivariant $\D$-module $M=\D_X\cdot h^{q}$. 
For $k\geq 0$, denote by $I_k$ (resp. $\tilde{I}_k$) the following $G$-equivariant ideal of $S$ (resp. of $S/(h)$):
\[ I_k =(M: h^{q-k})= \left\{ s \in S \mid s \cdot h^{q-k} \in M\right\}\,\,\, \mbox{ and } \,\,\, \tilde{I}_k = (I_k+(h))/(h) .\]
Then the $G$-character of $M$ is given by
\[[M]\cdot \sigma^{-q} =  \lim_{k\to \infty} [I_k]\cdot\sigma^{-k} = [S]+ \sum_{k\geq 1} [\tilde{I}_k]\cdot \sigma^{-k}.\]
\end{lemma}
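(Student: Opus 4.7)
The plan is to realize $M$ as an ascending union of $G$-stable subspaces $F_k M := I_k \cdot h^{q-k}$ inside the ambient module $S_h \cdot h^q$, compute their characters, and then telescope. First I would verify that $F_0 M \sube F_1 M \sube \cdots$ exhausts $M$. The inclusion follows from the elementary observation that if $s \in I_k$ then $sh \cdot h^{q-k-1} = s \cdot h^{q-k} \in M$, so $hI_k \sube I_{k+1}$ and hence $F_k M \sube F_{k+1} M$. The union equals $M$ because every element of $\D_X \cdot h^q$ is obtained by differentiating $h^q$ finitely many times, and each application of a derivation introduces at most one extra factor of $h^{-1}$, so any element of $M$ admits a common denominator that is a power of $h$.

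Next, since multiplication by $h^{q-k}$ is an injective $G$-equivariant map $I_k \to S_h \cdot h^q$ that twists the character by $\sigma^{q-k}$, one has $[F_k M] = [I_k] \cdot \sigma^{q-k}$. Admissibility of $M$ ensures that for each $\ll \in \LL$ the multiplicity $\la [F_k M], e^\ll \ra$ stabilizes to $\la [M], e^\ll \ra$ as $k \to \infty$, so $[M] = \lim_k [I_k] \cdot \sigma^{q-k}$ pointwise in $\Gamma(\GL)$. Multiplying by $\sigma^{-q}$ gives the first equality.

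For the second equality, I would establish the key identity $I_k \cap hS = h \cdot I_{k-1}$. The containment $hI_{k-1} \sube I_k \cap hS$ is immediate from the computation above, and conversely if $s = ht \in I_k$ for some $t \in S$ then $t \cdot h^{q-k+1} = s \cdot h^{q-k} \in M$ forces $t \in I_{k-1}$. By the second isomorphism theorem, $\tilde I_k \cong I_k/(I_k \cap hS) = I_k/hI_{k-1}$, and since $h$ is a nonzerodivisor on $S$ this yields a short exact sequence $0 \to I_{k-1} \xrightarrow{\cdot h} I_k \to \tilde I_k \to 0$ of $G$-modules, whence
\[
[\tilde I_k]\cdot \sigma^{-k} = [I_k]\cdot \sigma^{-k} - [I_{k-1}] \cdot \sigma^{-(k-1)}.
\]
Recognizing this as a telescoping relation and using $I_0 = S$ (since $h^q \in M$ forces $1 \in I_0$), summing over $k \geq 1$ yields $\sum_{k \geq 1} [\tilde I_k] \cdot \sigma^{-k} = \lim_{k\to\infty} [I_k]\cdot \sigma^{-k} - [S]$, which combined with the first equality gives the second. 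The only subtle point is the identity $I_k \cap hS = hI_{k-1}$, which relies on $h$ being a nonzerodivisor on the polynomial ring $S$; the remainder of the argument is formal manipulation of the ascending chain.
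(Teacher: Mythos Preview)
Your proof is correct and follows essentially the same approach as the paper: both exhaust $M$ by the ascending filtration $F_kM = I_k\cdot h^{q-k}$, use the identity $I_k\cap (h)=hI_{k-1}$ to identify $\tilde I_k\cong I_k/hI_{k-1}$, and telescope. You supply a few details the paper leaves implicit (why the filtration is exhaustive, admissibility to justify the limit), but the argument is the same.
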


\begin{proof}
Clearly, $I_0=S$. Moreover, we have $I_{k} \subset I_{k-1}$ and $I_k \cap (h) = h \cdot I_{k-1}$, for all $k\geq 1$. We obtain an exhaustive filtration of $M$
\[S\cdot h^{q} \subset I_1\cdot h^{q-1} \subset \dots \subset I_{k-1}\cdot h^{q-(k-1)} \subset I_k\cdot h^{q-k} \subset \dots\]
We have $\tilde{I}_k \cong I_k/(I_{k}\cap (h)) = I_k/h I_{k-1}$. Hence, we obtain
\[ [M]\cdot \sigma^{-q} = \lim_{k\to \infty} [I_k \cdot h^{-k}] = [S]+ \sum_{k\geq 1}  [I_k/h I_{k-1}]\cdot \sigma^{-k}=[S]+ \sum_{k\geq 1} [\tilde{I}_k]\cdot \sigma^{-k}.\]
\end{proof}

We return to our situation $G=\GL_6$ and $X=\bw^3 V$. We keep the notation from Lemma \ref{lem:charDf}.

\begin{lemma}\label{lem:charIk}
Take $M=B_4=\D_X\cdot f^{-3/2}$ and the $S/(f)$-ideals $\tilde{I}_k$ as in Lemma \ref{lem:charDf}. For $k\geq 1$, we have
\[[\tilde{I}_k] = \frac{1}{(1-e^{(1,1,1,0,0,0)})(1-e^{(2,1,1,1,1,0)})} \cdot \sum_{\substack{m,n \geq 0 \\ m+n\geq k}} e^{m(2,2,2,1,1,1)}\cdot e^{n(3,3,2,2,1,1)}.\]
\end{lemma}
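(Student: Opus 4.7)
The plan is to apply Lemma \ref{lem:charDf} with $h = f$, $q = -3/2$, $\sigma = (2^6)$, which reduces the computation of $[B_4]$ to determining the $\GL_6$-character of each ideal $\tilde I_k \subset S/(f)$ for $k \geq 1$. Since $f$ (of weight $(2^6)$) is one of the five generators of the polynomial algebra $S^U$ appearing in (\ref{eq:charS}), the character of $S/(f) = \C[\ol{O_3}]$ factorizes as
\[
[S/(f)] \;=\; [R] \cdot \sum_{a,\,b \geq 0} e^{a(2,2,2,1,1,1) + b(3,3,2,2,1,1)}, \qquad [R] := \frac{1}{(1-e^{(1,1,1,0,0,0)})(1-e^{(2,1,1,1,1,0)})}.
\]
Choosing nonzero representatives $\bar u, \bar v \in S/(f)$ of highest weights $(2,2,2,1,1,1)$ and $(3,3,2,2,1,1)$, the target formula is equivalent to the $\GL_6$-equivariant identification of $\tilde I_k$ with the subspace of $S/(f)$ spanned by $R$-copies at weight shifts $e^{a(2,2,2,1,1,1) + b(3,3,2,2,1,1)}$ with $a + b \geq k$. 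I would attack this via a two-sided squeeze.

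For the inclusion $\tilde I_k \supseteq (\bar u^a \bar v^b : a + b = k)$, I would construct the required elements using the multivariable $b$-function framework of \cite[Example 2.12]{bub} (cf.\ Remark \ref{rem:localb}): for each $\lambda \in \{(2,2,2,1,1,1),(3,3,2,2,1,1)\}$ there should exist a constant-coefficient $G$-equivariant differential operator $\partial_\lambda \in \D$ and a polynomial $b_\lambda(s)$ with $\partial_\lambda \cdot f^{s+1} = b_\lambda(s) \cdot h_\lambda \cdot f^s$ for some semi-invariant $h_\lambda \in S$ of weight $\lambda$. The holonomy diagram of $f$ in \cite[Section 8]{kimu} should provide enough detail to verify $b_\lambda(-1/2 - k) \neq 0$ for all $k \geq 1$ and both choices of $\lambda$. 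Iteratively applying products $\partial_u^{\,a}\, \partial_v^{\,b}$ to $f^{-1/2} \in B_4$ would then produce nonzero multiples of $h_u^a h_v^b \cdot f^{-a-b-3/2} \in B_4$, establishing $\bar u^a \bar v^b \in \tilde I_{a+b}$.

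For the reverse inclusion $\tilde I_k \subseteq (\bar u^a \bar v^b : a+b = k)$, the plan is to use the filtration (\ref{eq:filtrations}) giving $[B_4] = [S_f \cdot \sqrt f\,] - [D_1] - [D_2]$ together with the Fourier transform identifications $\F(B_4) = D_1$ and $\F(D_2) = D_2$ from Theorem \ref{thm:simples}. Via (\ref{eq:fourier}) these yield $[D_1] = [B_4]^\ast \cdot e^{(-10^6)}$ and the self-duality $[D_2] = [D_2]^\ast \cdot e^{(-10^6)}$, which combined with $[S_f \cdot \sqrt f\,] = [S_f] \cdot e^{(1^6)}$ place upper-bound constraints on each weight multiplicity of $[B_4]$, hence on each $[\tilde I_k]$ through Lemma \ref{lem:charDf}. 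The hard part is this upper bound: the character equation does not a priori determine $[B_4]$ because $[D_2]$ is unknown, so the bound must be extracted weight-by-weight, coupling the Fourier self-duality of $D_2$ with the observation that any extra contribution to $\tilde I_k$ beyond the span of $\{\bar u^a \bar v^b : a+b = k\}$ would force weights in $[D_2]$ that are incompatible with its self-duality. Once the upper bound matches the lower bound at every weight, the lemma follows.
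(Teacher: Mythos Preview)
Your lower-bound strategy is reasonable, though more involved than necessary; the paper gets it from a single degree-one operator (see below). The real gap is in the upper bound.

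You propose to bound $[\tilde I_k]$ from above by combining the filtration identity $[B_4]+[D_1]+[D_2]=[S_f\sqrt{f}]$ with the Fourier relations $[D_1]=[B_4]^\ast e^{(-10^6)}$ and $[D_2]=[D_2]^\ast e^{(-10^6)}$. But the self-duality of $[D_2]$ is \emph{automatic} from this equation: both $[S_f\sqrt{f}]$ and $[B_4]+[B_4]^\ast e^{(-10^6)}$ are fixed by the involution $e^\mu\mapsto e^{\mu^\ast-(10^6)}$, so the equality forces $[D_2]$ to be as well. Thus ``incompatibility with self-duality'' never occurs and gives no constraint. The only genuine restriction coming from $D_2$ is nonnegativity of its multiplicities, and you give no argument that an overshoot in some $\tilde I_k$ would drive a coefficient of $[D_2]$ negative. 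Since increasing $[B_4]$ at a weight $\mu$ increases $[D_1]$ at the \emph{dual} weight $\mu^\ast-(10^6)$ rather than at $\mu$, this nonnegativity analysis would have to be done pair-by-pair across the involution, and you have not carried it out.

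The paper avoids this entirely by determining $I_k$ directly. From the local $b$-function identity of Remark~\ref{rem:localb}, the constant-coefficient ideal $(V_{(0,-1,-1,-1,-1,-2)})\subset S^\ast$ annihilates $f^{-3/2}$; this ideal cuts out the highest-weight orbit in $X^\ast$, whose coordinate ring is the polynomial ring $\C[\partial_1]$ on a single generator of weight $(0,0,0,-1,-1,-1)$. Hence $B_4$ is generated over $S$ by $\{\partial_1^k f^{-3/2}\}_{k\geq 0}$, so $\tilde I_k$ is the $S/(f)$-ideal generated by the irreducible of highest weight $k\cdot(2,2,2,1,1,1)$. The upper bound is then immediate from multiplicity-freeness of $S/(f)$: every weight in $\tilde I_k$ has last entry $\geq k$, which rules out the terms with $m+n<k$. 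The lower bound uses only one further observation, that $\SS_{(1,1,1,0,0,0)}\otimes\SS_{(2,2,2,1,1,1)}\to\SS_{(3,3,2,2,1,1)}$ is surjective in $S/(f)$, so $\bar v\in(\bar u)$ and hence $\bar u^m\bar v^n\in(\bar u^{m+n})\subset\tilde I_k$ whenever $m+n\geq k$. This is what your proposal is missing: a description of the $S^\ast$-annihilator of $f^{-3/2}$ that pins down the generators of $I_k$.
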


\begin{proof}
We start with describing $I_k$ (and $\tilde{I}_k$) in Lemma \ref{lem:charDf} in more detail. Let $S^*=\C[\partial_{i,j,k} \mid 1\leq i < j < k \leq 6]$ denote the polynomial ring in the dual variables. Let $V_{(0,-1,-1,-1,-1,-2)}$ denote the (unique) representation of $S^*$ of highest weight $(0,-1,-1,-1,-1,-2)$.  By Remark \ref{rem:localb}, the ideal $(V_{(0,-1,-1,-1,-1,-2)})$ generated by the elements in $V_{(0,-1,-1,-1,-1,-2)}$ annihilates $f^{-3/2}$. Moreover, this ideal defines the closure of the orbit of the highest weight vector in $X^*$ of weight $(0,0,0,-1,-1,-1)$. We have a $G$-decomposition of the coordinate ring (see \cite[Chapter 5, Exercise 8]{jerzy})
\[[S^*/(V_{(0,-1,-1,-1,-1,-2)})] = \frac{1}{1-e^{(0,0,0,-1,-1,-1)}}.\]
Let $\partial_1$ denote the constant coefficient differential operator of degree $1$ of highest weight $(0,0,0,-1,-1,-1)$. The above discussion shows that the ideal $I_k$ is generated by the elements in the irreducible representation with highest weight vector $(\partial_1^k \cdot f^{-3/2})\cdot f^{3/2+k}$ of weight $(2k,2k,2k,k,k,k)$. Clearly, this vector is not in $(f)$.

We note that by (\ref{eq:charS}) the character of $S/(f)$ is multiplicity-free, i.e., any representation of $G$ occurs in $[S/(f)]$ with multiplicity at most one. Indeed, this follows from the fact that the tuples $(1,1,1,0,0,0)$, $(2,1,1,1,1,0)$, $(2,2,2,1,1,1)$, and $(3,3,2,2,1,1)$ are linearly independent over $\mathbb{Z}$. 

Hence, $\tilde{I}_k$ is the ideal of $S/(f)$ generated by the elements in the (unique) representation of highest weight $(2k,2k,2k,k,k,k)$.

The multiplication map in $S$ gives a surjection $\SS_{(1,1,1,0,0,0)} \oo \SS_{(2,2,2,1,1,1)} \to \SS_{(3,3,2,2,1,1)}$ (one can see this for example noticing that $e^{(3,3,2,2,1,1)}$ does not appear in $\C[\ol{O_2}]$ by using the desingularization in \cite[Proposition 7.3.8]{jerzy}). We deduce that $\tilde{I}_k$ contains all the highest weight vectors of weight 
\[e^{a(1,1,1,0,0,0)} \cdot e^{b(2,1,1,1,1,0)} \cdot e^{m(2,2,2,1,1,1)} \cdot e^{n(3,3,2,2,1,1)}, \,\,\, \mbox{ where } a,b \geq 0,\, m+n \geq k.\] 
On the other hand, if $m+n < k$ in the weight above, then the last entry of the weight is smaller than $k$, hence the weight cannot appear in $\tilde{I}_k$. This yields the result. 
\end{proof}

\begin{proof}[Proof of Theorem \ref{thm:charB4}]
Using (\ref{eq:charS}) and Lemma \ref{lem:charIk}, we have according to Lemma \ref{lem:charDf}
\[[B_4]\cdot e^{(3^6)}  = [S]+ \sum_{k\geq 1} [\tilde{I}_k]\cdot e^{(-2k^6)} =  \frac{1}{(1-e^{(1,1,1,0,0,0)})(1-e^{(2,1,1,1,1,0)})} \, \times \] 
\[\times \Bigl(\sum_{a,b,c \geq 0} e^{a(0,0,0,-1,-1,-1)}\cdot e^{b(1,1,0,0,-1,-1)}\cdot e^{(a+b+c)(2^6)}+\!\!\sum_{\substack{m,n,k \geq 0 \\ m+n\geq k}} e^{m(0,0,0,-1,-1,-1)}\cdot e^{n(1,1,0,0,-1,-1)}\cdot e^{(m+n-k)(2^6)}\Bigr)= \smallskip\]
\[=\frac{1}{(1-e^{(0,0,0,-1,-1,-1)})(1-e^{(1,1,0,0,-1,-1)})(1-e^{(1,1,1,0,0,0)})(1-e^{(2,1,1,1,1,0)})(1-e^{(2^6)})}.\]
\end{proof}

\section{Local cohomology}
\label{sec:loccoh}

In this section, we determine the all the (iterated) local cohomology modules of the simple equivariant $\D$-modules supported in the orbit closures.

\subsection{Local cohomology of $S$}\label{subsec:ring}

The goal in this section is to prove the Theorem on Local Cohomology as stated in the Introduction \ref{sec:intro}. Part (3) follows by Theorem \ref{thm:simples} since $H^1_{\ol{O_3}}(S)=S_f/S$. The nontrivial parts are (1) and (2). 

Since $H^{5}_{\ol{O_2}}(S)$ (resp. $H^{10}_{\ol{O_1}}(S)$) is the injective envelope of $D_2$ in $\opmod_G^{\ol{O_2}}(\D_X)$ (resp. of $D_1$ in $\opmod_G^{\ol{O_1}}(\D_X)$) by \cite[Lemma 3.11]{catdmod}, the claim about their structures follows by description of the quiver of $\opmod_G(\D_X)$.  In fact, we have an isomorphism 
\begin{equation}\label{eq:HB4}
H^{5}_{\ol{O_2}}(S)\cong S_f \sqrt{f}/\D f^{-3/2} \cong H^1_{\ol{O_3}}(B_4),
\end{equation}
where the last isomorphism follows immediately from the \v{C}ech cohomology description of local cohomology. 

We now proceed with part (1). Since  $\ol{O_1}$ is the cone over a smooth projective variety, there are several results in this direction relating local cohomology to singular cohomology \cite{Ogus}, \cite{garsab}, \cite{switala2015lyubeznik},  \cite{LSW}.

\begin{prop}\label{prop:locO1}
The following are all the nonzero local cohomology modules of $S$ with support in $\ol{O_1}$:
\[H^{10}_{\ol{O_1}}(S) = D_1, \;\; H^{13}_{\ol{O_1}}(S) = E, \;\; H^{15}_{\ol{O_1}}(S) = E.\]
\end{prop}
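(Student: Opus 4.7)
The first assertion $H^{10}_{\overline{O_1}}(S) = D_1$ has essentially been established in the paragraph preceding the proposition: $H^{10}_{\overline{O_1}}(S)$ is the injective envelope of $D_1$ in $\opmod_G^{\overline{O_1}}(\D_X)$ by \cite[Lemma 3.11]{catdmod}, and since the vertices $d_1$ and $e$ lie in different connected components of the quiver $Q$, they admit no nontrivial extensions, so the envelope reduces to $D_1$. For $i > 10$, Kashiwara's equivalence gives $\mathcal{H}^j_{O_1}(\mathcal{O}_{X\setminus\{0\}}) = 0$ for $j \neq 10$ (since $O_1 \subset X\setminus\{0\}$ is smooth of codimension $10$), so $H^i_{\overline{O_1}}(S)$ is supported at the origin and must be isomorphic to $E^{b_i}$ for some $b_i \geq 0$.

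To pin down the multiplicities $b_i$, the plan is to combine the localization triangle $R\Gamma_{\{0\}}(S) \to R\Gamma_{\overline{O_1}}(S) \to R\Gamma_{O_1}(X\setminus\{0\}, S)$ with the Kashiwara identification $R\Gamma_{O_1}(X\setminus\{0\}, S) \simeq R\Gamma(X\setminus\{0\}, D_1)[-10]$. Since $H^i_{\{0\}}(S) = 0$ except at $i = 20$ (where it equals $E$), the long exact sequence yields isomorphisms $H^i_{\overline{O_1}}(S) \cong H^{i-10}(X\setminus\{0\}, D_1) \cong H^{i-9}_{\{0\}}(D_1)$ for $11 \leq i \leq 18$, reducing everything to the computation of the local cohomology $H^*_{\{0\}}(D_1)$ of $D_1$ at the origin; the boundary cases $i \in \{19, 20\}$ are linked to $H^{20}_{\{0\}}(S) = E$ via the three-term segment of the same long exact sequence.

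For the computation of $H^*_{\{0\}}(D_1)$, I would exploit that $O_1$ is a $\mathbb{G}_m$-bundle over $\Gr = \Gr(3,W)$ associated to the Pl\"{u}cker line bundle $\mathcal{L} = \det\mathcal{Q}$, so that the Leray spectral sequence expresses the relevant cohomology as a direct sum $\bigoplus_{k \in \mathbb{Z}} H^*(\Gr, \mathcal{L}^{\otimes k})$ twisted by the contribution of the normal bundle of $O_1$ in $X$, with each term accessible via Borel--Weil--Bott (Theorem~\ref{thm:bott}): $H^*(\Gr, \mathcal{L}^{\otimes k})$ vanishes for $k \in \{-5, \ldots, -1\}$ and is concentrated in degree $0$ for $k \geq 0$, respectively degree $9$ for $k \leq -6$. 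Alternatively---and more directly---one can invoke the general results cited just before the proposition (\cite{Ogus}, \cite{garsab}, \cite{switala2015lyubeznik}, \cite{LSW}), which express the multiplicities $b_i$ in terms of primitive singular cohomology of $\tilde{Y} = \Gr(3,6)$. From the Poincar\'{e} polynomial $\binom{6}{3}_q$ one reads off Betti numbers $1, 1, 2, 3, 3, 3, 3, 2, 1, 1$ in even degrees $0, 2, \ldots, 18$, so subtracting consecutive Lefschetz contributions $h^{2k}_{\mathrm{prim}} = b_{2k} - b_{2k-2}$ leaves nonzero primitive Betti numbers only in degrees $0, 4, 6$, each equal to $1$. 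Combined with the codimension-$10$ shift, this singles out $i = 13$ and $i = 15$ as the only values in the middle range where $E$ appears, each with multiplicity one.

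The main technical obstacle is the boundary case $i \in \{19, 20\}$: the degree-$0$ primitive class of $\Gr(3,6)$ contributes a copy of $E$ to $H^{10}_{\{0\}}(D_1)$, and this copy must cancel against $H^{20}_{\{0\}}(S) = E$ under the connecting map in the three-term exact sequence in order to yield the asserted vanishing $H^{19}_{\overline{O_1}}(S) = H^{20}_{\overline{O_1}}(S) = 0$. Verifying this cancellation reduces to a character match against the explicit formula (\ref{eq:charE}) for $[E]$, which is unambiguous thanks to the multiplicity-freeness of the weights involved (as in Lemma~\ref{lem:charIk}).
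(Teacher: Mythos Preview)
Your ``alternative'' route---directly invoking the cited results of Ogus, Garc\'{i}a--Sabbah, \'Switala, and Lyubeznik--Singh--Walther---is exactly what the paper does. The paper applies \cite[Main Theorem~1.2]{switala2015lyubeznik} to obtain closed formulas $H^j_{\ol{O_1}}(S)=E^{\oplus(\beta_{19-j}-\beta_{17-j})}$ for $11\leq j\leq 17$, together with $H^{18}=E^{\oplus\beta_1}$, $H^{19}=E^{\oplus(\beta_0-1)}$, and $H^{20}=0$; plugging in the Betti numbers of $\Gr(3,6)$ finishes immediately. Crucially, the ``$-1$'' correction in degree $19$ and the vanishing in degree $20$ are already part of \'Switala's statement, so no separate boundary analysis is needed.

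Your primary approach via the localization triangle and $H^*_{\{0\}}(D_1)$ is correct in spirit for the middle range $11\leq i\leq 18$, but the boundary argument has a genuine gap. You end up with an exact sequence
\[
0 \to H^{19}_{\ol{O_1}}(S) \to E \to E \to H^{20}_{\ol{O_1}}(S) \to 0,
\]
and assert that ``verifying this cancellation reduces to a character match.'' It does not: a character match only tells you that the source and target of the connecting map are both isomorphic to $E$; it says nothing about whether the map itself is nonzero. If it were zero you would get $H^{19}=H^{20}=E$, and no multiplicity-freeness argument distinguishes this from the desired outcome. To close the gap you need an independent input---for instance Hartshorne--Lichtenbaum vanishing gives $H^{20}_{\ol{O_1}}(S)=0$ since $\dim\ol{O_1}>0$, which then forces the connecting map $E\to E$ to be surjective, hence an isomorphism. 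Alternatively, just cite \'Switala's theorem directly as the paper does; this is both shorter and avoids the circularity risk (note that the paper later computes $H^*_{O_0}(D_1)$ \emph{from} Proposition~\ref{prop:locO1}, not the other way around).
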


\begin{proof}
By the discussion above, we just need to determine how many copies of $E$ appear in $H^j_{\ol{O_1}}(S)$ for $j>10$. Write $\beta_i$ for the Betti numbers of the algebraic de Rham cohomology of the Grassmannian. By \cite[Main Theorem 1.2]{switala2015lyubeznik} (see also \cite[Theorem]{garsab} and \cite[Theorem 3.1]{LSW}), we have $H^j_{\ol{O_1}}(S)=E^{\oplus (\beta_{20-j-1}-\beta_{20-j-3})}$ for $11\leq j\leq 17$, $H^{18}_{\ol{O_1}}(S)=E^{\oplus \beta_1}$, $H^{19}_{\ol{O_1}}(S)=E^{\oplus \beta_0-1}$, and $H^{20}_{\ol{O_1}}(S)=0$. These Betti numbers are determined by the following Gaussian binomial coefficient \cite[Corollary 3.2.5]{manivel}:
$$
\sum_{i=0}^{18}\beta_i\cdot q^i=\binom{6}{3}_{q^2}=1+q^2+2q^4+3q^6+3q^8+3q^{10}+3q^{12}+2q^{14}+q^{16}+q^{18}.
$$
We conclude immediately that $H^{18}_{\ol{O_1}}(S)=H^{19}_{\ol{O_1}}(S)=0$. Computing the differences $\beta_{20-j-1}-\beta_{20-j-3}$ for $11\leq j\leq 17$ yields the desired result.
\end{proof}

We are left with proving part (2) of the Theorem on Local Cohomology, which we devote the rest of the section to.\medskip

We outline the strategy that we use to compute all the local cohomology modules of $S$ supported on $\ol{O_2}$. First, it is enough to find the multiplicities of the relevant semi-invariant ``witness'' weights in each local cohomology module. For this, we first consider a desingularization of the hypersurface $\ol{O_3}$ which we then use to locate the witness weights in the local cohomology modules of $S/(f)$ supported on $\ol{O_2}$. From the knowledge obtained for $H^i_{\ol{O_2}}(S/(f))$ we then proceed to recover $H^i_{\ol{O_2}}(S)$ using the associated long exact sequence. \medskip

Throughout we denote by $\Gr$ the Grassmannian $\Gr(3,6)$ and use the notation in Section \ref{sec:bott}.

As mentioned in the proof of Theorem \ref{thm:simples}, $\ol{O_3}$ is the projective dual of the highest weight orbit, hence a discriminant in the sense of \cite{jerzydisc}. Thus, it has a desingularization as the total space $Z=\Tot(\eta^*)$ of a bundle $\eta$ of 1-jets on $\Gr$, as described in \cite[Section 1]{jerzydisc}. The space $Z$ is a subbundle of the trivial bundle $\Gr \times \bw^3 V$, and we denote the first and the second projection (which yields the desingularization of $\ol{O_3}$) by 
\[p:Z \lra \Gr, \;\;\;\;\; q : Z \lra \ol{O_3}.\]
We denote by $\xi$ the locally free sheaf on $\Gr(3,6)$ corresponding to the quotient bundle obtained from the inclusion $Z\subset \Gr \times \bw^3 V$. Hence, we have the following exact sequence of locally free sheaves on $\Gr$:
\[ 0\to \xi \to \bw^3 W\oo \O_{\Gr} \to \eta \to 0.\] 
Moreover, the sheaf $\xi$ fits into the following exact sequence (see \cite[Section 1]{jerzydisc}, or \cite[Section 9.3]{jerzy}):
\[
0 \to \bw^3 \R \to \xi \to \Q\oo \bw^2 \R \to 0.
\]
It follows that $\eta$ fits into the exact sequence
\begin{equation}\label{eq:eta}
0 \to \bw^2 \Q\oo \R \to \eta \to \bw^3 \Q \to 0.
\end{equation}

The difficulty in the following calculations stems from the fact that the sequence (\ref{eq:eta}) does not split. Nevertheless, for each $k\geq 1$ we have the exact sequence:
\begin{equation}\label{eq:sym}
0 \to \Sym_k (\bw^2 \Q \oo \R) \to\Sym_k \eta \to \Sym_{k-1} \eta \otimes \bw^3 \Q \to 0.
\end{equation}

The left-hand side can be decomposed according to Cauchy's formula (\cite[Corollary 2.3.3]{jerzy}) (keeping in mind $\bw^2 \Q \cong \Q^\ast \oo \bw^3 \Q$):
\begin{equation}\label{eq:cauchy}
\Sym_k (\bw^2 \Q \oo \R) = \bo_{|\ll|=k} \SS_\ll \Q^\ast \oo \SS_\ll \R \oo (\bw^3 \Q)^{\oo k} =\bo_{|\ll|=k} \SS_{(\ll_1+\ll_2, \ll_1+\ll_3, \ll_2+\ll_3)} \Q \oo \SS_{(\ll_1,\ll_2,\ll_3)} \R, 
\end{equation}
where the sum is over all partitions $\ll = (\ll_1, \ll_2 ,\ll_3)$ of $k$.

\begin{lemma}\label{lem:nonsplit}
We have $H^0(\Gr,\Sym \eta)=S/(f)$, and $H^i(\Gr, \Sym \eta) = 0$ for $i\geq 0$. Moreover, $\eta$ is characterized as the unique (up to scalar) nonsplit extension in the sequence (\ref{eq:eta}).
\end{lemma}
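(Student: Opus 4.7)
The plan is to exploit that $Z = \Tot(\eta^*)$ provides a desingularization of the affine hypersurface $\ol{O_3} = V(f)$, and to combine the Leray spectral sequence for $p$ with the rational singularities statement of Lemma \ref{lem:ratsing}.

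First, since $p: Z \to \Gr$ is affine (as the total space of $\eta^*$), we have $p_* \O_Z = \Sym \eta$ and $R^i p_* \O_Z = 0$ for $i \geq 1$, giving $H^i(Z, \O_Z) = H^i(\Gr, \Sym \eta)$ via Leray. On the other hand, $q : Z \to \ol{O_3}$ is a proper birational morphism from a smooth variety, and Lemma \ref{lem:ratsing} yields $Rq_* \O_Z = \O_{\ol{O_3}}$. Since $\ol{O_3} = V(f)$ is an affine normal hypersurface, $H^0(\ol{O_3}, \O_{\ol{O_3}}) = S/(f)$ while all higher cohomology vanishes. Together these give $H^0(\Gr, \Sym \eta) = S/(f)$ and $H^i(\Gr, \Sym \eta) = 0$ for $i \geq 1$ (the stated $i \geq 0$ in the vanishing being an evident typo).

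For the classification of extensions in (\ref{eq:eta}), I would compute
$$\Ext^1_{\O_{\Gr}}(\bw^3 \Q,\; \bw^2 \Q \oo \R) \;=\; H^1(\Gr,\; \Q^* \oo \R),$$
using $(\bw^3 \Q)^* \oo \bw^2 \Q \cong \Q^*$ for a rank-$3$ bundle. Applying Theorem \ref{thm:bott} with $\ll = (0,0,-1,1,0,0)$: the shifted weight $\ll + \rho = (5,4,2,3,1,0)$ sorts to $(5,4,3,2,1,0)$ by a single transposition, producing $H^1 = \SS_{(0^6)} W = \C$. So the space of nonsplit extensions is one-dimensional up to scalar.

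The main obstacle is to verify that $\eta$ itself is nonsplit, which I would establish by contradiction with the cohomology vanishing from the first part. If $\eta$ split, then $\Sym^2 \eta$ would contain $\Sym^2(\bw^2 \Q \oo \R)$ as a direct summand, and by Cauchy's formula (\ref{eq:cauchy}) the latter has $\SS_{(2,2,0)}\Q \oo \SS_{(2,0,0)}\R$ as a piece (from $\ll = (2,0,0)$). Borel--Weil--Bott applied to this piece gives $\ll + \rho = (7,6,3,4,1,0)$, which again sorts with one transposition, producing a nonzero class in $H^1$ equal to $\SS_{(2,2,1,1,0,0)} W$. This would contradict $H^1(\Gr, \Sym \eta) = 0$, forcing $\eta$ to be nonsplit; combined with the $\Ext^1$ computation, $\eta$ is the unique nonsplit extension up to scalar.
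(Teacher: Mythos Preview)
Your argument is correct and follows essentially the same approach as the paper. Both proofs identify $H^i(\Gr,\Sym\eta)$ with $H^i(Z,\O_Z)$ via the affine map $p$, and then invoke rational singularities of $\ol{O_3}$ (Lemma~\ref{lem:ratsing}) for the cohomology computation; both compute $\Ext^1(\bw^3\Q,\bw^2\Q\oo\R)=H^1(\Gr,\Q^*\oo\R)=\C$ via Borel--Weil--Bott; and both deduce nonsplitness from the vanishing $H^1(\Gr,\Sym\eta)=0$ by locating a summand of $\Sym_k(\bw^2\Q\oo\R)$ with nonzero $H^1$ (you take $k=2$ and $\ll=(2,0,0)$ directly, while the paper phrases this as the connecting homomorphism in the long exact sequence of~(\ref{eq:sym}) being nonzero, which is equivalent).
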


\begin{proof}
For all $i\geq 0$ we have (since the variety $\ol{O_3}$ and the map $p$ are affine)
\[\bb{R}^i q_\ast \O_Z \cong H^i(Z,\O_Z) \cong H^i(\Gr, p_* \O_Z) = H^i(\Gr, \Sym \eta).\]
By Lemma \ref{lem:ratsing} $\ol{O_3}$ has rational singularities, hence the first claim.
Using Theorem \ref{thm:bott} we see that whenever $\ll_1 \geq 2+\ll_2+\ll_3$, the cohomology of the summand in (\ref{eq:cauchy}) corresponding to the partition $(\ll_1,\ll_2,\ll_3)$ contributes to $H^1(\Gr,\Sym_k (\bw^2 \Q \oo \R) )$. Since the connecting homomorphism $H^0(\Gr,  \Sym_{k-1} \eta \oo \bw^3 \Q)\to H^1(\Sym_k (\bw^2 \Q \oo \R))$ from (\ref{eq:sym}) is surjective, this shows that it is nonzero when $k\geq 2$. In particular, the sequences (\ref{eq:sym}) do not split for $k\geq 2$. Hence (\ref{eq:eta}) does not split, and this characterizes $\eta$ since by Theorem \ref{thm:bott}
\[\Ext(\bw^3 \Q, \bw^2 \Q\oo \R) \cong \Ext(\O_{Gr}, \Q^* \oo \R) \cong H^1(\Gr, \Q^* \oo \R) = \C.\]
\end{proof}

Let $U=q^{-1}(O_3)$, which is an open subset in $Z$. Since $q$ is a $G$-equivariant birational isomorphism, we have $U\cong O_3$ as $G$-varieties.

\begin{prop}\label{prop:divisor}
We have $Z\setminus U = D$, where $D$ is a $G$-stable divisor on $Z$. Moreover, the ideal sheaf of $D$ is $p^* (\L)$, where $\L$ is the line bundle on $\Gr$:
\[ \L = (\bw^3 \Q)^{\oo 2} \oo \bw^3 \R.\]
\end{prop}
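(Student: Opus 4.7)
The plan is to produce a global section $\sigma\in H^0(Z,p^*\L^{-1})$ whose scheme-theoretic zero locus is precisely $Z\setminus U$. This will simultaneously show that $D:=Z\setminus U$ is an effective Cartier divisor and identify its ideal sheaf with $p^*\L$. The $G$-stability of $D$ is automatic: $q$ is $G$-equivariant and $O_3$ is a $G$-orbit, so $U=q^{-1}(O_3)$ and its complement $Z\setminus U=q^{-1}(\ol{O_2})$ are both $G$-invariant.

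\emph{Construction of $\sigma$.} Since $Z=\Tot(\eta^*)$, the sheaf $p^*\eta^*$ on $Z$ carries a tautological section $\tau$ whose value at $(\R,v)\in Z$ is $v\in\eta^*|_\R\sub\bw^3 V$. Dualizing (\ref{eq:eta}) yields
\[
0\to (\bw^3\Q)^*\to\eta^*\to(\bw^2\Q\oo\R)^*\to 0,
\]
and composing $\tau$ with the quotient produces $\bar\tau\in H^0(Z,p^*(\bw^2\Q\oo\R)^*)$. Using the canonical identification $(\bw^2\Q\oo\R)^*\cong\ShHom(\bw^2\Q,\R^*)$, view $\bar\tau$ as a morphism of rank-three bundles $p^*\bw^2\Q\to p^*\R^*$. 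Since $\det(\bw^2\Q)=(\bw^3\Q)^{\oo 2}$ for a rank-three bundle $\Q$, taking the determinant of this morphism delivers
\[
\sigma:=\det(\bar\tau)\in H^0\bigl(Z,\,p^*\bigl((\bw^3\Q)^{-2}\oo(\bw^3\R)^{-1}\bigr)\bigr)=H^0(Z,p^*\L^{-1}).
\]

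\emph{Identification of the zero locus.} I would verify $V(\sigma)=q^{-1}(\ol{O_2})$ via a local computation on an affine chart of $\Gr$ centered at $\R_0=P^\perp$, where $P\sub V$ is a three-dimensional subspace, after fixing a linear complement $Q\sub V$ to $P$. In this chart $\eta^*|_\R=\bw^2 P\wedge V\cong\bw^3 P\oplus(\bw^2 P\oo Q)$, so a point $(\R,v)\in Z$ is encoded by a scalar $c\in\C$ and a $3\times 3$ matrix $B$ via
\[
v=c\cdot e_1\wedge e_2\wedge e_3+\sum_{i,j=1}^3 B_{ij}\,\omega_i\wedge e_{3+j},
\]
where $\omega_1,\omega_2,\omega_3$ is a basis of $\bw^2 P$ and $e_4,e_5,e_6$ a basis of $Q$; the section $\sigma$ becomes $\det(B)$ up to a nowhere-vanishing trivialization factor. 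I would combine this with the criterion that $v\in\ol{O_2}$ iff there exists nonzero $\alpha\in W$ with $\iota_\alpha v=0$. Writing $\alpha=\beta+\gamma$ with $\beta\in P^*$ and $\gamma\in\R=Q^*$, and decomposing $\iota_\alpha v\in\bw^2 V=\bw^2 P\oplus(P\wedge Q)\oplus\bw^2 Q$ into its three components (using that $\iota_\alpha$ is an odd derivation), the $\bw^2 Q$-part vanishes automatically, the $(P\wedge Q)$-part forces each column of $B$ to be a scalar multiple of $\beta$, and the $\bw^2 P$-part yields $c\beta+B\gamma=0$. Taking $\beta=0$ shows that a nonzero solution exists whenever $\ker B\neq 0$; conversely, any nonzero solution forces $\det B=0$ (directly if $\beta=0$, via rank $B\leq 1$ if $\beta\neq 0$). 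Hence $V(\sigma)=q^{-1}(\ol{O_2})$ on the chart, and since $\det B$ is a cubic polynomial whose gradient is nonzero at any matrix of rank $2$, the section $\sigma$ vanishes with multiplicity one along the generic point of $D$.

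\emph{Conclusion and main obstacle.} Since $\sigma$ is globally defined and the local identification above shows $V(\sigma)=q^{-1}(\ol{O_2})=Z\setminus U$ as reduced closed subschemes, $D$ is an effective Cartier divisor with ideal sheaf $\O_Z(-D)\cong p^*\L$. The hardest step is the local unwinding in the second paragraph: one must carefully identify $\eta^*|_\R$ as $(\bw^2\R\wedge W)^\perp=\bw^2 P\wedge V\sub\bw^3 V$, track the sign conventions when computing $\iota_\alpha v$, and observe that the three-way decomposition of the vanishing equation collapses, upon elimination of $\beta$ and $\gamma$, to the single scalar condition $\det\bar\tau=0$.
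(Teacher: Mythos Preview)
Your proof is correct and takes a genuinely different route from the paper's.

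Both arguments construct the same divisor: the paper obtains the inclusion $p^*\L\hookrightarrow\O_Z$ from the summand $\ll=(1,1,1)$ in the Cauchy decomposition of $\Sym_3(\bw^2\Q\oo\R)\subset\Sym_3\eta$, while you build the corresponding section of $p^*\L^{-1}$ explicitly as $\det\bar\tau$. These are the same map, just packaged dually.

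The real difference is in showing $D=Z\setminus U$. The paper argues structurally: it writes $Z\cong G\times_P N$, classifies the eight $L$-orbits on $N$, observes that every $L$-orbit closure except $N$ itself and $\ol{O'_{0,3}}=\{0\}\times\Mat_3$ lies in the determinantal locus $D'$, and then invokes Lemma~\ref{lem:nonsplit} (the nonsplitting of $\eta$) to rule out $\ol{O'_{0,3}}$ being $P$-stable. You instead verify the set-theoretic equality $V(\sigma)=q^{-1}(\ol{O_2})$ directly on a fiber, using the criterion that $v\in\ol{O_2}$ iff some nonzero $\alpha\in W$ satisfies $\iota_\alpha v=0$, and then reduce this via the $\bw^2P\oplus(P\wedge Q)\oplus\bw^2Q$ decomposition to $\det B=0$. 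Your gradient remark handles reducedness.

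What each approach buys: yours is self-contained and avoids any appeal to the nonsplitting of $\eta$, at the cost of an explicit coordinate computation. The paper's approach is more conceptual and yields, as a byproduct, the full $G$-orbit stratification of $Z$, but depends on Lemma~\ref{lem:nonsplit}. Two small points to tighten in your write-up: your computation is really fiberwise over a single $\R_0\in\Gr$, not over an affine chart, so you should invoke $G$-equivariance (or homogeneity of the fibers) to pass from one fiber to all of $Z$; and the identification $\eta^*|_\R=(\bw^2\R\wedge W)^\perp$ deserves a sentence, since it follows from the description of $\xi$ in \cite[Section~9.3]{jerzy} rather than directly from~(\ref{eq:eta}).
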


\begin{proof}
The sequence (\ref{eq:eta}) gives an inclusion $\Sym_{3} (\bw^2 \Q \oo \R) \hookrightarrow \Sym_{3} \eta$. In the decomposition (\ref{eq:cauchy}) (for $k=3$) the summand corresponding to the partition $\ll=(1,1,1)$ is $\L$, thus proving that we have an equivariant inclusion $\L \hookrightarrow \Sym \eta$. By adjunction of $p_*$ and $p^*$, this in turn yields an inclusion
\[ p^*(\L) \hookrightarrow \O_Z,\]
yielding the $G$-stable divisor $D$. To finish the proof, we need to show that all closed $G$-stable proper subsets in $Z$ are contained in $D$. 

Let $P$ be the parabolic subgroup of $\GL_6(\C)$ such that $\GL_6(\C)/P \cong \Gr(3,6)$, which has the Levi decomposition $P\cong L \ltimes U$ with $L\cong \GL_3(\C) \times \GL_3(\C)$ and $U\cong \Hom(\C^3, \C^3)$. From (\ref{eq:eta}) we can write $Z\cong G\times_P N$, where $N$ fits into the exact sequence of $P$-modules
\begin{equation}\label{eq:pee}
0 \to \bw^3 \C^3 \to N \to \bw^3 \C^3 \oo \Hom(\C^3,\C^3) \to 0. 
\end{equation}
There is a one-to-one correspondence between the $G$-stable closed subsets in $Z$ and the $P$-stable closed subsets in $N$. Clearly, $D$ corresponds to the $P$-stable divisor $D'$ on $N$ obtained by taking determinant $N \to \bw^3 \C^3 \oo \Hom(\C^3,\C^3) \xrightarrow{\det} \C$. We are left to show that all $P$-stable closed proper subsets in $N$ are contained in $D'$.

We first identify the $L$-stable closed subsets in $N$. From (\ref{eq:pee}) we see that as an $L$-module, 
\[N\cong  \bw^3 \C^3 \,\bo\, \bw^3 \C^3 \oo \Hom(\C^3,\C^3)\] By inspection, $L$ acts on $N$ with $8$ orbits, and the orbits are of the form $O'_{1,j}=(\C^\ast,M_j)$, $O'_{0,j}=(0,M_j)$, for $j=0,1,2,3$, where $M_j$ denotes the set of matrices in $\Hom(\C^3,\C^3)$ of rank $j$. Since $L$ acts on $N$ with finitely many orbits, each irreducible $P$-stable closed subset of $N$ must be of the form $\ol{O'_{i,j}}$, for some $i\in \{0,1\}$ and $j\in \{0,1,2,3\}$. Note that each $\ol{O'_{i,j}}$ is contained in $\D'=\ol{O'_{1,2}}$ except when $j=3$. But $\ol{O'_{1,3}} = N$, and $\ol{O'_{0,3}}$ cannot be $P$-stable, otherwise the sequence (\ref{eq:pee}) would split, contradicting Lemma \ref{lem:nonsplit}.

\end{proof}

The following allows us to compute local cohomology in terms of cohomology on the Grassmannian. We keep the notation as in Proposition \ref{prop:divisor}.

\begin{prop}\label{prop:localopen}
For each $i\geq 2$, we have an isomorphism of $G$-modules
\[H_{\ol{O_2}}^i(S/(f)) \cong \varinjlim_k H^{i-1} (\Gr, \L^{-k}\oo\Sym \eta).\]
\end{prop}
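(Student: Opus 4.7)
The plan is to pass through the resolution $q\colon Z\to \ol{O_3}$ and relate local cohomology along $\ol{O_2}$ to cohomology on the open complement of the Cartier divisor $D=q^{-1}(\ol{O_2})$ coming from Proposition \ref{prop:divisor} (whose ideal sheaf is $p^*\L$).

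Since $Z$ is smooth and $\ol{O_3}$ has rational singularities by Lemma \ref{lem:ratsing}, we have $Rq_*\O_Z = \O_{\ol{O_3}} = S/(f)$. Combined with the identification $q^{-1}(\ol{O_2}) = D$ and the standard compatibility of proper pushforward with local cohomology along a closed subset ($R\Gamma_{\ol{O_2}}\circ Rq_*\cong Rq_*\circ R\Gamma_D$), this yields a $G$-equivariant isomorphism
\[H^i_{\ol{O_2}}(S/(f)) \cong H^i_D(Z,\O_Z).\]
Writing $U = Z\setminus D$ for the open complement, the long exact sequence of local cohomology along $D$ combined with the vanishing $H^{>0}(Z,\O_Z) = H^{>0}(\Gr,\Sym\eta) = 0$ from Lemma \ref{lem:nonsplit} yields an isomorphism $H^{i-1}(U,\O_U) \cong H^i_D(Z,\O_Z)$ for every $i\geq 2$.

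Finally, I would identify $H^{i-1}(U,\O_U)$ with the colimit appearing in the statement. Because $D$ is a Cartier divisor with ideal sheaf $p^*\L$, the open immersion $j\colon U\hookrightarrow Z$ is affine and $j_*\O_U = \varinjlim_k \O_Z(kD) = \varinjlim_k p^*\L^{-k}$ with $R^{>0}j_*\O_U=0$. Commuting the filtered colimit with cohomology on the Noetherian scheme $Z$, and using affineness of $p$ together with the projection formula $p_*(p^*\L^{-k}) = \L^{-k}\oo \Sym\eta$, gives $H^{i-1}(Z,p^*\L^{-k})\cong H^{i-1}(\Gr,\L^{-k}\oo\Sym\eta)$ and hence the claimed identification. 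The only subtle point is the proper pushforward identity for local cohomology; beyond that the argument is essentially formal, combining Lemmas \ref{lem:ratsing}, \ref{lem:nonsplit} and Proposition \ref{prop:divisor}.
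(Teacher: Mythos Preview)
Your argument is correct and follows essentially the same route as the paper: both pass from $H^i_{\ol{O_2}}(S/(f))$ to $H^{i-1}(U,\O_U)$, then use $j_*\O_U=\varinjlim_k p^*\L^{-k}$ together with affineness of $p$ and the projection formula. The only difference is in the first step: the paper uses the direct isomorphism $q|_U\colon U\xrightarrow{\sim}O_3$ (stated just before Proposition~\ref{prop:divisor}) and affineness of $\ol{O_3}$ to write $H^i_{\ol{O_2}}(S/(f))\cong H^{i-1}(O_3,\O_{O_3})\cong H^{i-1}(U,\O_U)$ in one line, whereas you go through the derived identity $R\Gamma_{\ol{O_2}}\circ Rq_*\cong Rq_*\circ R\Gamma_D$ combined with $Rq_*\O_Z=\O_{\ol{O_3}}$ and the vanishing from Lemma~\ref{lem:nonsplit}. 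Your path is slightly heavier but perfectly valid, and has the virtue of generalizing to resolutions that are not isomorphisms over the open stratum.
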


\begin{proof}
Denote by $j:U \to Z$ the open embedding, which is affine according to Proposition \ref{prop:divisor}. We have
\[H_{\ol{O_2}}^i(S/(f)) \cong H^{i-1}(O_3,\O_{O_3})\cong  H^{i-1}(U,\O_{U})\cong H^{i-1}(Z, j_* \O_{U}).\]
In view of Proposition \ref{prop:divisor} we have
\[j_* \O_{U}=\varinjlim_k\; (p^*\L)^{-k}.\] 
Note that cohomology commutes with direct limits \cite[Proposition III.2.9]{hartshorne}. The result follows (since $p$ is affine) by
\[H^{i-1}(Z,  p^*\L^{-k}) \cong H^{i-1}(\Gr, p_* p^*\L^{-k}) \cong H^{i-1}(\Gr, \L^{-k} \oo \Sym \eta).\]
\end{proof}

We have the following exact sequence of $G$-equivariant $S$-modules:
\begin{equation}\label{eq:exact}
0\to S \oo (2^6) \xrightarrow{\;\;f\;\;} S \lra S/(f) \to 0.
\end{equation}
The associated long exact sequence in local cohomology gives

\begin{equation}\label{eq:long}
\dots \to H^{i-1}_{\ol{O_2}}(S/(f)) \to H^i_{\ol{O_2}}(S)\oo (2^6) \xrightarrow{\;f\;} H^i_{\ol{O_2}}(S) \to H^i_{\ol{O_2}}(S/(f)) \to H^{i+1}_{\ol{O_2}}(S)\oo (2^6) \to \dots
\end{equation}

We summarize some of the preliminary knowledge about the modules  $H^i_{\ol{O_2}}(S)$. Since $\ol{O_2}$ has codimension $5$ in $X$, we have $H^i_{\ol{O_2}}(S)=0$ for $i\leq 4$, and so also $H^j_{\ol{O_2}}(S/(f))=0$ for $j\leq 3$.

As mentioned at the beginning of the section, we have  $H^{5}_{\ol{O_2}}(S) \cong S_f\sqrt{f}/\D f^{-3/2}$, which has the semi-invariant weight $(-5^6)$ that is annihilated under multiplication by $f$. In particular, we see from (\ref{eq:long}) that the only semi-invariant weight in $H^{4}_{\ol{O_2}}(S/(f))$ is $(-3^6)$ (with multiplicity 1).

%\andras{Note (\ref{eq:long}) splits up into short exact sequences since maps induced by $f$ are surjective, as they are for $S_f$, $S_f\cdot{\sqrt{f}}$}

Since in the quiver $(Q,I)$ there are no arrows between $d_1$ and $e$, for all $i\geq 6$ the $\D$-module $H^{i}_{\ol{O_2}}(S)$ must be a direct sum  
\begin{equation}\label{eq:de}
H^{i}_{\ol{O_2}}(S)\cong D_1 ^{\oplus a_i} \oplus E^{\oplus b_i}, \;\; \mbox{ for some } a_i, b_i \geq 0.
\end{equation}

Since $D_1 \cong \D f^{-7/2}/\D f^{-5/2}$ (resp. $E\cong \D f^{-5}/ \D f^{-4}$), the only semi-invariant weight of $D_1$ (resp. $E$) that is annihilated under multiplication by $f$ is $(-7^6)$ (resp. $(-10^6)$). Hence, using the long exact sequence (\ref{eq:long}) we see that in order to determine $a_i,b_i$ in (\ref{eq:de}), it is enough to track the semi-invariant weights $\chi_1 = (-5^6)$ (resp. $\chi_2=(-8^6)$) in $H^{i-1}_{\ol{O_2}}(S/(f))$. Summarizing, we obtained (using the notation in (\ref{eq:de})):

\begin{prop}\label{prop:prelim}
For $i\geq 5$, we have 
\[\la H^{i}_{\ol{O_2}}(S/(f)), (-5^6) \ra = a_{i+1}, \;\;\; \la  H^{i}_{\ol{O_2}}(S/(f)), (-8^6) \ra = b_{i+1}.\]
Also, for $i\leq 4$ we have 
\[\la H^{i}_{\ol{O_2}}(S/(f)), (-5^6) \ra = 0, \;\;\; \la  H^{i}_{\ol{O_2}}(S/(f)), (-8^6) \ra =0.\]
\end{prop}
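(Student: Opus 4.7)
The plan is to extract multiplicities of the witness weights $(-5^6)$ and $(-8^6)$ in $H^i_{\ol{O_2}}(S/(f))$ from the long exact sequence (\ref{eq:long}), tracked isotypic component by isotypic component. The preliminary step is to tabulate which semi-invariant weights $(\mu^6)$ appear in each relevant simple $\D$-module. From $D_1 \cong \D f^{-7/2}/\D f^{-5/2}$ and $E \cong \D f^{-5}/\D f^{-4}$, the semi-invariant weights of $D_1$ are exactly $(-7-2k)^6$ for $k \geq 0$ (coming from $f^{-7/2-k}$), and those of $E$ are $(-10-2k)^6$ for $k \geq 0$, each with multiplicity one; in particular, neither $(-5^6)$ nor $(-8^6)$ occurs in $D_1$ or $E$. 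Using $[H^5_{\ol{O_2}}(S)] = [\D f^{-7/2}]-[\D f^{-3/2}] = [D_1]+[D_2]$ and subtracting the known semi-invariant contributions of $S_f\sqrt{f}$ and $B_4$, one sees that the only semi-invariant weight in $D_2$ is $(-5^6)$, with multiplicity one.

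Since $f$ has weight $(2^6)$, the long exact sequence (\ref{eq:long}) at a semi-invariant weight $(\mu^6)$ becomes
\[\cdots \to \la H^{i}_{\ol{O_2}}(S), (\mu-2)^6\ra \xrightarrow{\;f\;} \la H^{i}_{\ol{O_2}}(S), (\mu^6)\ra \to \la H^{i}_{\ol{O_2}}(S/(f)), (\mu^6)\ra \to \la H^{i+1}_{\ol{O_2}}(S), (\mu-2)^6\ra \xrightarrow{\;f\;} \la H^{i+1}_{\ol{O_2}}(S), (\mu^6)\ra \to \cdots.\]
For $i \geq 6$ and $\mu \in \{-5,-8\}$, the tally above gives $\la H^i_{\ol{O_2}}(S), (\mu^6)\ra = 0$, $\la H^i_{\ol{O_2}}(S), (-7^6)\ra = a_i$, and $\la H^i_{\ol{O_2}}(S), (-10^6)\ra = b_i$, so exactness collapses the sequence to isomorphisms $\la H^i_{\ol{O_2}}(S/(f)), (-5^6)\ra \cong a_{i+1}$ and $\la H^i_{\ol{O_2}}(S/(f)), (-8^6)\ra \cong b_{i+1}$.

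The step requiring the most care is the boundary case $i=5$, since $H^5_{\ol{O_2}}(S) \cong \D f^{-7/2}/\D f^{-3/2}$ has nonzero $(-5^6)$- and $(-7^6)$-weight components. Here $\la H^5, (-7^6)\ra = \la H^5, (-5^6)\ra = 1$ and $\la H^5, (-10^6)\ra = \la H^5, (-8^6)\ra = 0$; the key observation is that multiplication by $f$ sends the class of $f^{-7/2}$ to the class of $f^{-5/2}$, and this is nonzero in the quotient because $b_f(-5/2) = 0$ prevents $\D f^{-5/2}$ from being contained in $\D f^{-3/2}$. Thus $\cdot f$ is an isomorphism on the one-dimensional weight spaces $\la H^5, (-7^6)\ra \to \la H^5, (-5^6)\ra$, and the LES at $\mu = -5$ and $i=5$ collapses to $\la H^5_{\ol{O_2}}(S/(f)), (-5^6)\ra \cong a_6$; at $\mu = -8$ both flanking $H^5$-terms vanish a priori, giving $\la H^5_{\ol{O_2}}(S/(f)), (-8^6)\ra \cong b_6$. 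Finally, for $i = 4$ the codimension bound $\codim \ol{O_2} = 5$ yields $H^4_{\ol{O_2}}(S) = 0$, so $H^4_{\ol{O_2}}(S/(f))$ injects into $\ker(f : H^5_{\ol{O_2}}(S)\oo(2^6) \to H^5_{\ol{O_2}}(S))$; at $\mu = -5$ this kernel is zero by the isomorphism just established, and at $\mu = -8$ both terms vanish, while for $i \leq 3$ the module $H^i_{\ol{O_2}}(S/(f))$ is already zero by the codimension remark preceding the proposition.
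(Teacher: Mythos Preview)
Your argument is correct and follows essentially the same route as the paper: both use the long exact sequence~(\ref{eq:long}) at the isotypic components $(-5^6)$ and $(-8^6)$, together with the identification of which semi-invariant weights appear in $D_1$, $D_2$, $E$, and $H^5_{\ol{O_2}}(S)\cong S_f\sqrt{f}/\D f^{-3/2}$. The paper phrases the key input as ``the only semi-invariant weight of $D_1$ (resp.\ $E$) annihilated by $f$ is $(-7^6)$ (resp.\ $(-10^6)$)'', while you instead list the full set of semi-invariant weights of each simple; these are equivalent observations. Your treatment of the boundary case $i=5$, checking explicitly that $f$ carries the class of $f^{-7/2}$ isomorphically onto that of $f^{-5/2}$ in $H^5_{\ol{O_2}}(S)$, is more detailed than the paper's one-line remark that $(-5^6)$ is the unique weight of $H^5$ killed by $f$, but the content is the same.
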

 
\bigskip

\textbf{Notation.} From now on, $\chi$ will denote either of the characters $\chi_1 = (-5^6)$ or $\chi_2 = (-8^6)$. For a representation $M$ of $G$, we denote by $M^\chi$ its isotypical component corresponding to $\chi$.

\bigskip

Hence, by Propositions \ref{prop:localopen} and \ref{prop:prelim}, we are left to compute $\varinjlim_k H^{i} (\Gr, \L^{-k}\oo\Sym \eta)^\chi$. We first estimate cohomology by working with the associated graded of $\eta$:
\begin{equation}
\gr \eta= \bw^2 \Q \oo \R \, \bo \, \bw^3 \Q. %\gr \xi = \bw^3 \R \, \bo \, \Q \oo \bw^2 \R,
\end{equation}

\begin{prop}\label{prop:gr}
For $d,i,k\geq 0$, the space 
\[H^{i} (\Gr, \L^{-k}\oo\Sym_d (\gr \eta))^\chi\] 
is nonzero only in the following situations (when it is 1-dimensional):
\begin{itemize}
\item[(1)] For $\chi=\chi_1$ with $d=3k-10$: $i=0,1,3,4$ for $k\geq 5$; and $i=5$ for $k\geq 4$.
\item[(2)] For $\chi= \chi_2$ with $d=3k-16$: $i=0,1,3,4$ for $k\geq 8$; $i=5,6$ for $k\geq 7$; and $i=8$ for $k\geq 6$.
\end{itemize}
\end{prop}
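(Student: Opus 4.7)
My plan is to decompose $\Sym_d(\gr\eta)$ completely into irreducible $G$-equivariant summands, apply Borel--Weil--Bott (Theorem~\ref{thm:bott}) to each, and reduce the question to a finite combinatorial check on the $\binom{6}{3}=20$ ways of splitting six consecutive integers into two triples. Using the decomposition $\Sym_d(\gr\eta)=\bigoplus_{a+b=d}\Sym_a(\bw^2\Q\oo\R)\oo\Sym_b(\bw^3\Q)$, Cauchy's formula (\ref{eq:cauchy}), the identifications $\Sym_b(\bw^3\Q)=\SS_{(b,b,b)}\Q$ and $\L^{-k}=\SS_{(-2k,-2k,-2k)}\Q\oo\SS_{(-k,-k,-k)}\R$, I will express
\[
\L^{-k}\oo\Sym_d(\gr\eta)=\bigoplus_{b\geq 0,\,\ll\vdash(d-b)}\SS_\sigma\Q\oo\SS_\tau\R,
\]
where $\sigma=(\ll_1+\ll_2+b-2k,\,\ll_1+\ll_3+b-2k,\,\ll_2+\ll_3+b-2k)$ and $\tau=(\ll_1-k,\,\ll_2-k,\,\ll_3-k)$.

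By Theorem~\ref{thm:bott}, a summand $\SS_\sigma\Q\oo\SS_\tau\R$ contributes the one-dimensional representation $\SS_{(c^6)}W$ in some cohomological degree $i$ if and only if the six entries of $(\sigma,\tau)+\rho$ form the set $\{c,c+1,\ldots,c+5\}$, in which case $i$ equals the number of inversions of this tuple. Summing the six entries forces $d=3k+2c$, immediately yielding $d=3k-10$ for $\chi_1$ and $d=3k-16$ for $\chi_2$. Setting $A=\{\sigma_1+5,\sigma_2+4,\sigma_3+3\}$ and $B=\{\tau_1+2,\tau_2+1,\tau_3\}$, ordered decreasingly as $(A_1,A_2,A_3)$ and $(B_1,B_2,B_3)$, these sets must partition $\{c,\ldots,c+5\}$. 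The identities $\sigma_1-\sigma_2=\tau_2-\tau_3=\ll_2-\ll_3$ and $\sigma_2-\sigma_3=\tau_1-\tau_2=\ll_1-\ll_2$ translate into the gap conditions
\[
A_1-A_2=B_2-B_3,\qquad A_2-A_3=B_1-B_2,
\]
and a direct check over all $20$ splittings leaves exactly $8$ that satisfy both.

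For each of these $8$ splits one recovers $\ll_i=k+\tau_i$ and $b=A_3-B_2-B_3-2$ uniquely; feasibility requires $\ll_3\geq 0$, which gives the lower bound on $k$, and $b\geq 0$, which eliminates splits where $A_3$ sits too low inside $\{c,\ldots,c+5\}$. For $c=-5$ exactly three of the eight admissible splits fail $b\geq 0$, leaving five cases whose inversion counts are precisely $i=0,1,3,4,5$ with the respective bounds $k\geq 5,5,5,5,4$; for $c=-8$ only one split fails, leaving seven cases with $i=0,1,3,4,5,6,8$ and bounds $k\geq 8,8,8,8,7,7,6$. Because each admissible split yields a distinct $i$, each listed situation contributes exactly one copy of the one-dimensional $\SS_{(c^6)}W$, giving the one-dimensionality claim. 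The main obstacle is the bookkeeping: correctly deriving the gap conditions, enumerating and tabulating the eight admissible splits together with their inversion counts and feasibility bounds, and verifying that the two resulting tables reproduce the asserted ranges on both $i$ and $k$.
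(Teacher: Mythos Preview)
Your proposal is correct and follows essentially the same approach as the paper: fully decompose $\L^{-k}\otimes\Sym_d(\gr\eta)$ via Cauchy's formula, apply Borel--Weil--Bott summand by summand, and reduce to an eight-case combinatorial check. The only difference is in how the finite check is parametrized: the paper substitutes $a=\ll_1-k-x$, $b=\ll_2-k-x$, $c=\ll_3-k-x$ to reduce to the symmetric sequence $(-c,-b,-a,a,b,c)$ sorting to $(0^6)$, whereas you phrase it as splittings of $\{c,\ldots,c+5\}$ satisfying the gap conditions $A_1-A_2=B_2-B_3$ and $A_2-A_3=B_1-B_2$; these are equivalent encodings of the same eight solutions, and your feasibility conditions $\ll_3\geq 0$ and $b\geq 0$ correspond exactly to the paper's $k\geq -x-c$ and $a+b+c\leq -x$.
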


\begin{proof}
Let us denote $\chi=(x^6)$ (so $x$ is either $-5$ or $-8$). We have by (\ref{eq:cauchy}) the decomposition
\begin{equation}\label{eq:part}
 \L^{-k}\oo\Sym_d (\gr \eta)= \bo_{|\ll|\leq d} \SS_{d-2k-\ll_3, d-2k-\ll_2, d-2k-\ll_1}\Q \,\oo\, \SS_{\ll_1-k,\ll_2-k,\ll_3-k}\R.
 \end{equation}
Now we analyze using Theorem \ref{thm:bott} which partitions $\ll$ above yield $e^\chi$ in cohomology. Clearly, for the degrees to match we must have $3d-9k = 6x$, which gives $d=3k+2x$.

For a partition $\ll$ from (\ref{eq:part}), put $a=\ll_1 - k - x, b=\ll_2-k - x, c=\ll_3-k - x$. Then the numbers satisfy $a\geq b\geq c\geq -x-k,\; a+b+c\leq -x$, and the corresponding sequence from \ref{eq:part} is $(-c+x,-b+x,-a+x, a+x, b+x, c+x)$. Clearly, this sequence yields $(x^6)$ by the exchange rule if and only if the following sequence yields $(0^6)$:
\[ (-c,\,-b,\,-a,\,a,\,b,\,c).\]
By inspection, the following triplets $(a,b,c)$ with $a\geq b\geq c$ give $(0^6)$ above via $a+b+c$ exchanges:
\[(0,0,0);\;\; (1,0,0);\;\;(2,1,0);\;\;(2,2,0);\;\;(3,1,1);\;\;(3,2,1);\;\;(3,3,2);\;\;(3,3,3).\]
Analyzing the conditions $ k\geq -x-c,\; a+b+c\leq -x$ for $x=-5$ and $x=-8$ finishes the proof.
\end{proof}

The sequences that give cohomology above come from the following bundles in the decomposition (\ref{eq:part}):
\begin{equation}\label{eq:bundles}
\begin{aligned}
\SS_{x,x,x}\Q \oo \SS_{x,x,x} \R; \;\; \SS_{x,x,x-1}\Q \oo \SS_{x+1,x,x} \R; \;\; \SS_{x,x-1,x-2}\Q \oo \SS_{x+2,x+1,x} \R; \;\;\SS_{x,x-2,x-2}\Q \oo \SS_{x+2,x+2,x} \R ; \\
\SS_{x-1,x-1,x-3}\Q \oo \SS_{x+3,x+1,x+1} \R; \;\;\SS_{x-1,x-2,x-3}\Q \oo \SS_{x+3,x+2,x+1} \R; \;\;\SS_{x-2,x-2,x-3}\Q \oo \SS_{x+3,x+3,x+2} \R; \;\;,
\end{aligned}
\end{equation}
where $x=-5$ (bundles in (\ref{eq:bundles}) from first row) or $x=-8$ (all bundles from (\ref{eq:bundles})).

The space $H^{i} (\Gr, \L^{-k}\oo\Sym \eta)^\chi$ is generally smaller than $H^{i} (\Gr, \L^{-k}\oo\Sym_d (\gr \eta))^\chi$, as cancellations might occur in the latter between two terms for consecutive $i$. In order to describe the connecting homomorphisms leading to these potential cancellations, we use the sequence (\ref{eq:sym}) recursively. We note that by Propositions \ref{prop:localopen} and \ref{prop:prelim} we are only interested in cohomology for $i\geq 4$, so we ignore the cases $i=0,1$ in Proposition \ref{prop:gr}. We still keep track of $i=3$ though, since it cancels the term in $i=4$:

\begin{prop}\label{prop:cancel}
We have $H^i(\Gr, \L^{-k} \oo \Sym \eta)^\chi =0$ for $i=3,4$ and all $k\geq 0$.
\end{prop}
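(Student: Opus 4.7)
The plan is to fix $\chi = (x^6) \in \{(-5^6), (-8^6)\}$ and $k \geq 0$, and prove that $H^3(\Gr, \L^{-k}\oo\Sym_d\eta)^\chi = 0 = H^4(\Gr, \L^{-k}\oo\Sym_d\eta)^\chi$ for the unique degree $d = 3k + 2x$ contributing to the $\chi$-isotypical part (this uniqueness follows from character matching via (\ref{eq:cauchy})). We will use the filtration on $\L^{-k}\oo\Sym_d\eta$ induced by (\ref{eq:eta}), whose successive quotients are
\[G_j := \L^{-k}\oo(\bw^3\Q)^{\oo j}\oo\Sym_{d-j}(\bw^2\Q\oo\R).\]
By Proposition \ref{prop:gr} and the parameterization in its proof, the $\chi$-isotypical part of $H^i(\Gr, G_j)$ is at most one-dimensional and nonzero only at specific $(j,i)$-positions: for $\chi_1 = (-5^6)$ these positions are $\{(5,0),(4,1),(2,3),(1,4),(0,5)\}$ (requiring $k \geq 5$), while for $\chi_2 = (-8^6)$ they are $\{(8,0),(7,1),(5,3),(4,4),(3,5),(2,6),(0,8)\}$ (requiring $k \geq 8$); smaller $k$ produce sparser lists.

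If $k$ is small enough that no positions with $i \in \{3,4\}$ appear (i.e., $k \leq 4$ for $\chi_1$ or $k \leq 7$ for $\chi_2$), the claim holds trivially. Otherwise, I will iteratively apply the long exact sequences from $0 \to F_{j-1} \to F_j \to G_j \to 0$ (equivalent to (\ref{eq:sym}) tensored with appropriate powers of $\L^{-k}\oo\bw^3\Q$), starting from the top of the filtration where $G_d = \L^{-k}\oo(\bw^3\Q)^{\oo d}$ has cohomology determined by Theorem \ref{thm:bott}. Because of the sparsity listed above, at most levels the $\chi$-components of $H^i(F_j)$ vanish, and for $\chi_1$ the analysis collapses to a single connecting homomorphism
\[\delta : H^3(\Gr, G_2)^{\chi_1} \longrightarrow H^4(\Gr, G_1)^{\chi_1},\]
with source and target both one-dimensional and isomorphic to $\SS_{(-5^6)}W$, such that $H^3(\L^{-k}\oo\Sym_d\eta)^{\chi_1} = \ker \delta$ and $H^4(\L^{-k}\oo\Sym_d\eta)^{\chi_1} = \coker \delta$. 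The analogous reduction for $\chi_2$ yields a map $H^3(\Gr, G_5)^{\chi_2} \to H^4(\Gr, G_4)^{\chi_2}$ between two copies of $\SS_{(-8^6)}W$.

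The main obstacle will be to prove $\delta \neq 0$; this forces $\delta$ to be an isomorphism of one-dimensional spaces and hence $\ker \delta = \coker \delta = 0$. The map $\delta$ is the connecting homomorphism of an extension arising from (\ref{eq:sym}) tensored with $\L^{-k}\oo\bw^3\Q$, whose extension class is ultimately derived from the unique nontrivial element $\xi \in \Ext^1(\bw^3\Q, \bw^2\Q\oo\R) \cong \C$ identified in Lemma \ref{lem:nonsplit}. I plan to verify $\delta \neq 0$ by interpreting it as cup product with $\xi$ on the one-dimensional $\chi$-isotypical cohomology class: by Schur's lemma $\delta$ is determined up to a scalar, which is checked to be nonzero using a \v{C}ech cocycle representative of $\xi \in H^1(\Gr, \Q^\ast \oo \R)$ paired with the explicit bundle summand of $G_j$ contributing to the $\chi$-isotypical component. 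This delicate representation-theoretic computation is the technical heart of the argument.
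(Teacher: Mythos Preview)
Your reduction to a single connecting homomorphism $\delta$ between one-dimensional $\chi$-isotypical pieces is correct and matches the paper's setup (the paper calls this map $d^{3,4}_k$ in (\ref{eq:connect})). The difference lies entirely in how $\delta\ne 0$ is established, and your proposed route is the hard one.

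You plan to verify $\delta\ne 0$ by a direct \v{C}ech-cocycle/cup-product computation against the extension class $\xi$ of $\eta$. This is plausible in principle but delicate: the extension governing $\delta$ is not literally $\xi\in H^1(\Gr,\Q^*\oo\R)$ but the class of the derived sequence (\ref{eq:sym}) after tensoring, and tracking how $\xi$ induces that class through the symmetric-power functor and then pairing it with the specific Borel--Weil--Bott summand from (\ref{eq:bundles}) is a genuine computation you have not carried out. You yourself flag it as ``the technical heart of the argument''; as written it is a gap.

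The paper sidesteps this computation entirely with an indirect argument. First, the inclusion $\L^{-k}\oo\Sym\eta\hookrightarrow\L^{-k-1}\oo\Sym\eta$ induces the identity on the relevant bundles from (\ref{eq:bundles}), so $d^{3,4}_k$ is an isomorphism for one admissible $k$ iff for all, and the direct-limit maps in Proposition~\ref{prop:localopen} are isomorphisms on the $\chi$-part. Now argue by contradiction: if $d^{3,4}_k$ were \emph{not} an isomorphism, then $\dim H^3(\Gr,\L^{-k}\oo\Sym\eta)^\chi=1$ for all large $k$, and Proposition~\ref{prop:localopen} would give $\langle H^4_{\ol{O_2}}(S/(f)),\chi\rangle=1$. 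But Proposition~\ref{prop:prelim} already says this multiplicity is $0$ (ultimately because $\codim\ol{O_2}=5$ forces $H^i_{\ol{O_2}}(S)=0$ for $i\le 4$, and the structure of $H^5_{\ol{O_2}}(S)$ is known). Contradiction; hence $\delta$ is an isomorphism.

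So your approach would require a computation you have not done, whereas the paper bootstraps from $\D$-module-theoretic vanishing that is already in hand, converting the problem into a one-line contradiction. The paper's method is shorter and more robust, and the same trick is reused in Proposition~\ref{prop:cancel2} (where an auxiliary weight $\sigma=(-6^6)$, absent from both $D_1$ and $E$, forces the analogous $(5,6)$-connecting map to be nonzero, whence the $\chi_2$-map is nonzero too since the underlying $\O_{\Gr}$-extensions differ only by a twist).
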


\begin{proof}
Using the sequences (\ref{eq:sym}) tensored by $\L^{-k}$  (and its variants by tensoring with a suitable power of $\bw^3 \Q$) together with Proposition \ref{prop:gr}, we arrive at the connecting morphism $d^{3,4}_k$
\begin{equation}\label{eq:connect}
\begin{aligned}
0 \to H^3(\Gr, \L^{-k} \oo \Sym_{a+1}\eta \oo (\bw^3 \Q)^{b-1})^\chi \to H^3(\Gr, \L^{-k}\oo \Sym_a (\bw^2 \Q \oo \R)\oo(\bw^3 \Q)^{b})^\chi \xrightarrow{\;d^{3,4}_k\;} \\
 \to H^4(\Gr, \L^{-k}\oo\Sym_{a+1}(\bw^2 \Q \oo \R)\oo(\bw^3 \Q)^{b-1})^\chi \to H^4(\Gr, \L^{-k} \oo \Sym_{a+1}\eta\oo(\bw^3 \Q)^{b-1})^\chi \to 0,
\end{aligned}
\end{equation}
where $b=-x-3$ (for $x=-5$ or $x=-8$) and $a=d-b$, with $d,k$ as in Proposition \ref{prop:gr}. With such $k$, we show that $d_k^{3,4}$ is an isomorphism if and only if $d_{k+1}^{3,4}$ is an isomorphism, and the limit morphisms in $\varinjlim_k H^{i} (\Gr, \L^{-k}\oo\Sym \eta)^\chi$ are isomorphisms. Note that $d_k^{3,4}$ is a map between 1-dimensional spaces that come from the third and fourth bundle from (\ref{eq:bundles}). Hence, the inclusion $\L^{-k} \oo \Sym \eta \hookrightarrow \L^{-k-1} \oo \Sym \eta$ induces the vertical maps in the commutative diagram

\[\xymatrix{
H^3(\Gr, \SS_{x,x-1,x-2}\Q \oo \SS_{x+2,x+1,x} \R) \ar[r]^{d_k^{3,4}} \ar[d] & H^4(\Gr, \SS_{x,x-2,x-2}\Q \oo \SS_{x+2,x+2,x} \R) \ar[d] \\
 H^3(\Gr, \SS_{x,x-1,x-2}\Q \oo \SS_{x+2,x+1,x} \R) \ar[r]^{d_{k+1}^{3,4}} &  H^4(\Gr, \SS_{x,x-2,x-2}\Q \oo \SS_{x+2,x+2,x} \R)
}\]

Clearly, the vertical maps are isomorphisms since  $\L^{-k} \oo \Sym \eta \hookrightarrow \L^{-k-1} \oo \Sym \eta$ induces the identity on the level of the respective bundles. Hence, $d_k^{3,4}$ is an isomorphism if and only if $d_{k+1}^{3,4}$ is an isomorphism (of 1-dimensional spaces).

Assume by contradiction that $d_k^{3,4}$ is not an isomorphism for some (hence all) $k$ as in Proposition \ref{prop:gr}. Then $\dim H^3(\Gr, \L^{-k} \oo \Sym \eta)^\chi =1$ by Proposition \ref{prop:gr}, and as seen above, the limit maps in $\varinjlim_k H^{i} (\Gr, \L^{-k}\oo\Sym \eta)^\chi$ are isomorphisms. By Proposition \ref{prop:localopen} we get $\la H^4_{\ol{O_2}}(S/(f)) , \chi \ra=1$, contradicting Proposition \ref{prop:prelim}. Hence, the maps $d_k^{3,4}$ are isomorphisms for all $k$, which by (\ref{eq:connect}) implies that $H^i(\Gr, \L^{-k} \oo \Sym \eta)^\chi =0$ for $i=3,4$.
\end{proof}

We keep the notation as in (\ref{eq:de}) and Proposition \ref{prop:prelim}.

\begin{corollary}\label{cor:ab}
For $k\geq 4$ we have $\dim H^5(\Gr, \L^{-k} \oo \Sym \eta)^{\chi_1} =1$, and for $k\geq 6$ we have $\dim H^8(\Gr, \L^{-k} \oo \Sym \eta)^{\chi_2} =1$. Furthermore, $a_7=1$ and $a_i=0$ for $i\neq 7$, and $b_{10}=1$ and $b_j=0$ for $j\neq 7,8, 10$.
\end{corollary}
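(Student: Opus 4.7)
The plan is to combine the graded-cohomology computations of Proposition \ref{prop:gr} with a limit-stability argument in the spirit of Proposition \ref{prop:cancel}, and then extract the multiplicities $a_i$ and $b_j$ via Proposition \ref{prop:prelim}. The two dimension assertions are the main content; the vanishing of the remaining $a_i, b_j$ then follows by bookkeeping.

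For the $\chi_1$ claim, first observe that at the minimum value $k=4$, the admissibility conditions $c \geq -x - k$ and $a+b+c \leq -x$ from the proof of Proposition \ref{prop:gr} force the only allowed triplet for $x=-5$ to be $(3,1,1)$, which contributes a single $1$-dimensional graded piece in cohomological degree $5$. Thus the $\chi_1$-isotypic component of the graded of $\L^{-4} \oo \Sym \eta$ is concentrated in degree $5$, the spectral sequence of the filtration (\ref{eq:sym}) degenerates on this piece, and $\dim H^5(\Gr, \L^{-4} \oo \Sym \eta)^{\chi_1} = 1$. For $k \geq 5$, the same spectral sequence gives $\dim H^5 \leq 1$. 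The reverse bound follows from the commutative-diagram argument of Proposition \ref{prop:cancel} applied to the inclusion $\L^{-k} \hookrightarrow \L^{-(k+1)}$: this map acts as the identity on the $(3,1,1)$-summand in each graded piece (its Schur-functor data shift uniformly in $k$), so the transition maps in the direct system are isomorphisms on the surviving $H^5$-contribution, and the $1$-dimensional summand at $k=4$ propagates to all $k\geq 4$.

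The $\chi_2$ claim for $H^8$ uses the same mechanism. At $k=6$ only the triplet $(3,3,2)$ is admissible, yielding a $1$-dimensional graded piece in degree $8$; hence $\dim H^8(\Gr, \L^{-6} \oo \Sym \eta)^{\chi_2} = 1$. For $k \geq 7$, additional triplets $(3,1,1)$ and $(3,2,1)$ become admissible, contributing in degrees $5$ and $6$, but no $d_r$-differential in the spectral sequence can reach the degree-$8$ piece from these (the total-degree gap is too large, while $d_r$ shifts total degree by exactly $1$). The same transition-map argument preserves the $1$-dimensional summand in degree $8$ for all $k \geq 6$.

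Combining these dimension counts with Propositions \ref{prop:localopen} and \ref{prop:prelim} reads off the multiplicities. We obtain $a_7 = \lim_k \dim H^5(\Gr, \L^{-k} \oo \Sym \eta)^{\chi_1} = 1$, while $a_6 = 0$ by Proposition \ref{prop:cancel} and $a_i = 0$ for $i \geq 8$ because Proposition \ref{prop:gr} shows $H^j(\gr)^{\chi_1} = 0$ for $j \geq 6$. Analogously $b_{10} = 1$; moreover $b_6 = 0$ by Proposition \ref{prop:cancel}, $b_9 = 0$ (no $\chi_2$-graded contribution in degree $7$), and $b_j = 0$ for $j \geq 11$ (no graded contribution in degree $\geq 9$), which yields $b_j = 0$ for $j \notin \{7, 8, 10\}$. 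The main obstacle is ensuring that the top-degree bundles are isolated from all higher-page differentials; this reduces to a direct shift check in $(p,q)$-coordinates, exploiting that any nonzero differential on a $1$-dimensional source would be an isomorphism forcing a contradiction with the vanishing of local cohomology in lower degrees.
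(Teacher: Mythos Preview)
Your bookkeeping for the $a_i$ and $b_j$ is correct, and the $H^8$ argument for $\chi_2$ is fine: since Proposition~\ref{prop:gr} produces no $\chi_2$-piece in degree $7$ or $9$, the degree-$8$ term can neither receive nor emit a differential and survives for all $k\geq 6$. The gap is in your $H^5$ argument for $\chi_1$ when $k\geq 5$.

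You argue that the transition map $\L^{-k}\hookrightarrow \L^{-(k+1)}$ is the identity on the $(3,1,1)$-summand of the associated graded, and conclude that the $1$-dimensional $H^5$ at $k=4$ propagates. But an isomorphism on an $E_1$-term does not force an isomorphism on the abutment. Concretely, for $k\geq 5$ the $\chi_1$-pieces in degrees $3$, $4$, $5$ sit at filtration levels $p=d-2,\,d-1,\,d$ with the same $q=5-d$, so they form a three-term $E_1$-complex
\[
E_1^{d-2,5-d}\xrightarrow{\,d_1\,}E_1^{d-1,5-d}\xrightarrow{\,d_1\,}E_1^{d,5-d}.
\]
Nothing you have said rules out the second $d_1$ being nonzero, which would kill the degree-$5$ term. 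Your ``commutative-diagram argument of Proposition~\ref{prop:cancel}'' cannot bootstrap from $k=4$ here, because at $k=4$ the degree-$4$ source is zero, so the relevant vertical map in the square is not an isomorphism and the comparison of $d^{4,5}_k$ with $d^{4,5}_{k+1}$ breaks down precisely at the base case.

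The paper's argument closes this gap in one line using the \emph{result} (not just the method) of Proposition~\ref{prop:cancel}: since $H^3=H^4=0$, the first $d_1$ above must be an isomorphism between the $1$-dimensional degree-$3$ and degree-$4$ terms; then $d_1^2=0$ forces the second $d_1$ to vanish, so the degree-$5$ term survives and $\dim H^5(\Gr,\L^{-k}\otimes\Sym\eta)^{\chi_1}=1$ for all $k\geq 5$. Once this is in place, the limit-map argument (which the paper also records) gives $a_7=1$ as you state.
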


\begin{proof}
Clearly, by Proposition \ref{prop:gr} it follows that $\dim H^8(\Gr, \L^{-k} \oo \Sym \eta)^{\chi_2} =1$ for $k\geq 6$. The 3rd and 4th cohomology in Proposition \ref{prop:gr} (1) cancel out in the spectral sequence by Proposition \ref{prop:cancel}. Hence, we have $\dim H^5(\Gr, \L^{-k} \oo \Sym \eta)^{\chi_1} =1$. Analogous to the proof in Proposition \ref{prop:cancel}, the limit maps will be isomorphisms (for $k$ as in Proposition \ref{prop:gr}). By Propositions \ref{prop:localopen} and \ref{prop:prelim}, we obtain $b_{10}=1$ and $a_7=1$, and the rest follows by combining Propositions \ref{prop:localopen}, \ref{prop:prelim}, \ref{prop:gr}, \ref{prop:cancel}. 
\end{proof}

We are left to find $b_7$ and $b_8$. By Propositions \ref{prop:gr} and \ref{prop:cancel}, either $b_7=b_8=0$ or $b_7=b_8=1$. This depends on a connecting homomorphism analogous to (\ref{eq:connect}). Since the limit maps will be isomorphisms (as in the proof of Proposition \ref{prop:cancel}), we can reduce to the case $k=7$ in Proposition \ref{prop:gr}. The connecting homomorphism of interest is
\begin{equation}\label{eq:connect2}
\begin{aligned}
0 \to H^5(\Gr, \L^{-7} \oo \Sym_{3}\eta \oo (\bw^3 \Q)^{2})^{\chi_2} \to H^5(\Gr, \L^{-7}\oo \Sym_2 (\bw^2 \Q \oo \R)\oo(\bw^3 \Q)^{3})^{\chi_2} \xrightarrow{\;d^{5,6}\;} \\
 \to H^6(\Gr, \L^{-7}\oo\Sym_{3}(\bw^2 \Q \oo \R)\oo(\bw^3 \Q)^{2})^{\chi_2} \to H^6(\Gr, \L^{-7} \oo \Sym_{3}\eta\oo(\bw^3 \Q)^{2})^{\chi_2} \to 0.
\end{aligned}
\end{equation}
Among the bundles from (\ref{eq:bundles}), the map $d^{5,6}$ is between
\[H^5(\Gr, \SS_{-9,-9,-11}\Q \oo \SS_{-5,-7,-7} \R)  \xrightarrow{d^{5,6}} H^6(\Gr, \SS_{-9,-10,-11}\Q \oo \SS_{-5,-6,-7} \R).\]

We will obtain the cancellation by comparing the connecting homomorphism (\ref{eq:connect2}) to another one which we know is nonzero. The following finishes the proof of part (2) of the Theorem on Local Cohomology. 

%\andras{Maybe there is a different weight of $E$ that is killed by $f$ which does not appear in  $H^6(\Gr, \L^{-k} \oo \Sym (\gr\eta))$. This would give a cleaner proof of the following}

\begin{prop}\label{prop:cancel2}
We have  $H^i(\Gr, \L^{-k} \oo \Sym \eta)^{\chi_2} =0$ for $i=5,6$ and all $k\geq 0$. Hence, $b_7=b_8=0$.
\end{prop}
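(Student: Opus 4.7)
The plan is to show that the connecting homomorphism $d^{5,6}$ in (\ref{eq:connect2}) is an isomorphism of 1-dimensional spaces; by exactness this forces the flanking $H^5$ and $H^6$ of $\L^{-7}\oo\Sym_3\eta\oo(\bw^3\Q)^2$ in the $\chi_2$-isotypic component to vanish. Combining this with the stabilization/limit argument used in Corollary~\ref{cor:ab} and in the proof of Proposition~\ref{prop:cancel}---namely, that the inclusions $\L^{-k}\hookrightarrow\L^{-k-1}$ restrict to the identity on each 1-dimensional Bott summand appearing in (\ref{eq:bundles})---propagates the vanishing to all $k\ge 0$ and both $i=5$ and $i=6$. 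Propositions~\ref{prop:localopen} and~\ref{prop:prelim} then deliver $b_7=b_8=0$.

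To prove that $d^{5,6}\ne 0$, the idea is to compare it with a connecting homomorphism already known to be nonzero. Every connecting morphism coming from the sequence (\ref{eq:sym}), and in particular from its iterated use that produces (\ref{eq:connect2}), is, at the level of Cauchy summands, cup product with (an image of) the extension class $\xi\in H^1(\Gr,\Q^{\ast}\oo\R)\cong\C$ of (\ref{eq:eta}). I would build a commutative diagram of long exact sequences whose top row has a connecting map we already know is nonzero---either the benchmark $H^0(\Gr,\Sym_{k-1}\eta\oo\bw^3\Q)\to H^1(\Gr,\Sym_k(\bw^2\Q\oo\R))$ from Lemma~\ref{lem:nonsplit}, or the $\chi_1$-analogue $d_k^{3,4}$ shown to be an isomorphism in Proposition~\ref{prop:cancel}---and whose bottom row realizes $d^{5,6}$. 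The vertical arrows should be chosen so that Theorem~\ref{thm:bott} guarantees they induce isomorphisms on the 1-dimensional Cauchy pieces from (\ref{eq:bundles}) carrying weight $\chi_2$, which on the source and target of $d^{5,6}$ are respectively $H^5(\Gr,\SS_{-9,-9,-11}\Q\oo\SS_{-5,-7,-7}\R)$ and $H^6(\Gr,\SS_{-9,-10,-11}\Q\oo\SS_{-5,-6,-7}\R)$.

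The main obstacle is the Schur-functor bookkeeping. Since $d^{5,6}$ is itself a composition of several connecting homomorphisms obtained by iterating (\ref{eq:sym}), one must trace carefully which summand of the Cauchy decomposition (\ref{eq:cauchy}) carries the $\chi_2$-weight class at each intermediate stage, and verify that the comparison diagram sends a generator to a generator. Once this alignment is in place, the non-vanishing reduces to a direct Borel--Weil--Bott verification.
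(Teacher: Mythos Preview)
Your strategy---reduce to showing $d^{5,6}\neq 0$ by comparing it with a connecting homomorphism already known to be nonzero---is exactly the right one, and the stabilization and conclusion via Propositions~\ref{prop:localopen} and~\ref{prop:prelim} are correct. The gap is in the choice of comparison map. Neither of your candidates works without substantial further argument: the map $d_k^{3,4}$ for $\chi_1$ links the Cauchy summands indexed by $(a,b,c)=(2,1,0)$ and $(2,2,0)$ in the list~(\ref{eq:bundles}), whereas $d^{5,6}$ links those indexed by $(3,1,1)$ and $(3,2,1)$. These are genuinely different Schur bundles, not related by any evident twist, so there is no commutative square with vertical isomorphisms of the kind you describe. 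Interpreting both as cup product with the extension class $\xi$ of~(\ref{eq:eta}) does not help either, since cup product with a nonzero class in $H^1$ can certainly vanish on a given $H^i$.

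The paper's argument supplies the missing idea: compare not with $\chi_1$ but with the auxiliary weight $\sigma=(-6^6)$. The point is that the bundles in~(\ref{eq:bundles}) for $x=-6$ at the triples $(3,1,1)$ and $(3,2,1)$ differ from those for $x=-8$ only by tensoring with $(\bw^3\Q\oo\bw^3\R)^{\oo 2}$, which is trivial as an $\O_{\Gr}$-module (it is $\det W\oo\O_{\Gr}$ squared). Hence the connecting map $d^{5,6}_\sigma$ for $\sigma$ and $d^{5,6}$ for $\chi_2$ are literally the same $\C$-linear map on cohomology; only the $\GL$-equivariant structure changes. One then shows $d^{5,6}_\sigma\neq 0$ by a different mechanism: from Theorem~\ref{thm:simples} one has $D_1^\sigma=E^\sigma=0$, so by~(\ref{eq:de}) and Proposition~\ref{prop:localopen} the limit $H^5$ and $H^6$ for the weight $\sigma$ must vanish, forcing $d^{5,6}_\sigma$ to be an isomorphism. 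This is the comparison you were looking for; your ``Schur-functor bookkeeping'' obstacle disappears once the correct reference weight is chosen.
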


\begin{proof}
We will track another weight $\sigma=(-6^6)$ in $H^i(\Gr, \L^{-k} \oo \Sym \eta)$ for $i=5,6$. As in the proof of Proposition \ref{prop:gr} (putting $x=-6$), we obtain that $\dim H^5 (\Gr, \L^{-k} \oo \Sym (\gr \eta))^{\sigma} = \dim H^6(\Gr, \L^{-k} \oo \Sym (\gr\eta))^{\sigma}=1$, for all $k\geq 5$. We note that by Theorem \ref{thm:simples}, $D_1^{\sigma} = E^{\sigma} = 0$. However, as the limit maps are isomorphisms (as in the proof of Proposition \ref{prop:cancel}), we get by Proposition \ref{prop:localopen} and (\ref{eq:de}) that we must have $H^5 (\Gr, \L^{-k} \oo \Sym \eta)^{\sigma} = H^6(\Gr, \L^{-k} \oo \Sym \eta)^{\sigma}=0$. This shows that the following connecting homomorphism ${d^{5,6}_\sigma}$ (for $k=5$) for the weight $\sigma$ that is analogous to (\ref{eq:connect2}) is nonzero:
\begin{equation}\label{eq:connect3}
\begin{aligned}
0 \to H^5(\Gr, \L^{-5} \oo \Sym_{3}\eta)^{\sigma} \to H^5(\Gr, \L^{-5}\oo \Sym_2 (\bw^2 \Q \oo \R)\oo\bw^3 \Q)^{\sigma} \xrightarrow{d^{5,6}_\sigma} \\
 \to H^6(\Gr, \L^{-5}\oo\Sym_{3}(\bw^2 \Q \oo \R))^{\sigma} \to H^6(\Gr, \L^{-5} \oo \Sym_{3}\eta)^{\sigma} \to 0.
\end{aligned}
\end{equation}
But this implies that the connecting homomorphism (\ref{eq:connect2}) itself is nonzero, since bundles inducing the two connecting homomorphisms (\ref{eq:connect2}) and (\ref{eq:connect3}) have the same $\O_{\Gr}$-structures -- the difference comes only in the $\GL$-equivariant structure by twisting with the trivial bundle $(\bw^3 Q \oo \bw^3 \R)^{\oo 2}$. Hence $H^i(\Gr, \L^{-k} \oo \Sym \eta)^{\chi_2} =0$ for $i=5,6$.
\end{proof}

\subsection{Local cohomology of the other simple objects}

\begin{theorem}\label{thm:locD1}
For all orbits $O\neq O_0$ we have $H^{\bullet}_{\ol{O}}(D_1)=H^0_{\ol{O}}(D_1)=D_1$. The following are all the nonzero local cohomology modules of $D_1$ with support in $O_0$:
$$
H^4_{O_0}(D_1)=E,\;\; H^6_{O_0}(D_1)=E,\;\; H^{10}_{O_0}(D_1)=E.
$$
\end{theorem}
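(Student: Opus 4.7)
For the first assertion, note that $D_1$ has support $\ol{O_1}$ and for any orbit $O\neq O_0$ we have $\ol{O_1}\subseteq \ol{O}$. Hence $D_1$ is already $\ol{O}$-torsion: writing $j\colon X\setminus \ol{O}\hookrightarrow X$ for the open complement, one has $j^\ast D_1 = 0$, so the triangle $R\Gamma_{\ol{O}}(D_1)\to D_1\to Rj_\ast j^\ast D_1$ degenerates and yields $H^0_{\ol{O}}(D_1)=D_1$ together with vanishing in all positive degrees.

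For the second assertion, the plan is to apply the Grothendieck (composition) spectral sequence
\[
E_2^{p,q}=H^p_{O_0}\bigl(H^q_{\ol{O_1}}(S)\bigr)\Longrightarrow H^{p+q}_{O_0}(S),
\]
which is available because $O_0\subseteq \ol{O_1}$. The abutment is computed by part (0) of the Theorem on Local Cohomology: it equals $E$ for $p+q=20$ and vanishes otherwise. By Proposition \ref{prop:locO1}, the inner term $H^q_{\ol{O_1}}(S)$ is nonzero only for $q=10,13,15$, with values $D_1,E,E$. Applying the first part of the present theorem to $E$ (which is supported at $O_0$) gives $H^p_{O_0}(E)=E$ if $p=0$ and $0$ otherwise, so rows $q=13$ and $q=15$ of the $E_2$-page each consist of a single copy of $E$ at $p=0$, while row $q=10$ houses the sought-for groups $H^p_{O_0}(D_1)$.

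Since $E$ is the unique simple $G$-equivariant $\D$-module supported at $O_0$ and the vertex $e$ of the quiver $(Q,I)$ has no loop, $\Ext^1_{\opmod_G(\D_X)}(E,E)=0$, so every equivariant module supported at $O_0$ decomposes as a direct sum of copies of $E$. Hence $H^p_{O_0}(D_1)\cong E^{\oplus a_p}$ for nonnegative integers $a_p$; moreover $a_0=0$ since $D_1$ is simple with support strictly containing $O_0$.

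It remains to extract the $a_p$ from convergence. Because the $E_2$-page is concentrated in only three rows ($q=10,13,15$), a bidegree count shows that the only potentially nonzero differentials between these rows are $d_4\colon E_4^{0,13}\to E_4^{4,10}$ and $d_6\colon E_6^{0,15}\to E_6^{6,10}$. Vanishing of the abutment at total degrees $13$ and $14$ forces $d_4$ to be both injective and surjective, hence an isomorphism, giving $a_4=1$; analogously, vanishing at $15$ and $16$ forces $d_6$ to be an isomorphism, giving $a_6=1$. The entry $E_2^{10,10}$ is a permanent cycle with no incoming or outgoing differentials and must account for the $E$ of the abutment at total degree $20$, so $a_{10}=1$. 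All remaining $a_p$ contribute to total degrees where the abutment is zero and no differential can cancel them, forcing $a_p=0$. The main step is thus setting up the spectral sequence and the bookkeeping of available differentials; once that is in place, the identification of each $a_p$ is automatic.
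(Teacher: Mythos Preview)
Your proof is correct and follows essentially the same approach as the paper: both use the composition spectral sequence $H^p_{O_0}(H^q_{\ol{O_1}}(S))\Rightarrow H^{p+q}_{O_0}(S)$ together with parts (0) and (1) of the Theorem on Local Cohomology, and determine the $a_p$ by forcing correct convergence. The paper's proof is terse (it simply says the spectral sequence ``does not converge properly unless'' the claimed values hold), whereas you have spelled out the relevant differentials $d_4$ and $d_6$ and the semisimplicity of $\opmod_G^{O_0}(\D_X)$; this is a faithful expansion of the same argument.
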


\begin{proof}
The first assertion follows immediately from the fact that the support of $D_1$ is $\ol{O_1}$. Now we prove the second assertion. Using parts (0) and (1) of the Theorem on Local Cohomology, the spectral sequence $H^i_{O_0}(H^j_{\ol{O_1}}(S))\Rightarrow H^{i+j}_{O_0}(S)$ does not converge properly unless $H^4_{O_0}(D_1)=E$, $H^6_{O_0}(D_1)=E$, $H^{10}_{O_0}(D_1)=E$, and all other local cohomology modules vanish.
\end{proof}

\begin{lemma}\label{lem:H7B4}
We have $H^7_{O_0}(B_4)=0$, and for $i\geq 0$ $H^{i+1}_{O_0}(B_4)=H^i_{O_0}(H^5_{\ol{O_2}}(S))$.
\end{lemma}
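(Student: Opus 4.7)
My plan is to derive both statements from the identification $H^5_{\ol{O_2}}(S) \cong H^1_{\ol{O_3}}(B_4)$ of (\ref{eq:HB4}). First I would verify that $H^q_{\ol{O_3}}(B_4) = 0$ for $q \neq 1$: since $B_4 \hookrightarrow S_f\sqrt{f}$ is $f$-torsion-free and $\ol{O_3} = V(f)$ is a hypersurface cut out by the single element $f$, we have $H^0_{\ol{O_3}}(B_4) = 0$ and $H^q_{\ol{O_3}}(B_4) = 0$ for $q \geq 2$. Hence the Grothendieck composition spectral sequence
$$E_2^{p,q} = H^p_{O_0}(H^q_{\ol{O_3}}(B_4)) \Rightarrow H^{p+q}_{O_0}(B_4)$$
is concentrated on the row $q = 1$, degenerates at $E_2$, and yields the desired isomorphism $H^{i+1}_{O_0}(B_4) \cong H^i_{O_0}(H^5_{\ol{O_2}}(S))$ for all $i \geq 0$ (together with $H^0_{O_0}(B_4) = 0$ as a byproduct).

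By this isomorphism, the vanishing $H^7_{O_0}(B_4) = 0$ is equivalent to $H^6_{O_0}(H^5_{\ol{O_2}}(S)) = 0$. My plan for this is to invoke a second composition spectral sequence
$$E_2^{p,q} = H^p_{O_0}(H^q_{\ol{O_2}}(S)) \Rightarrow H^{p+q}_{O_0}(S),$$
whose abutment is $E$ in total degree $20$ and zero otherwise. Using the Theorem on Local Cohomology part (2) and Theorem \ref{thm:locD1}, the nonzero $E_2$-entries are confined to the columns $q \in \{5, 7, 10\}$ and are very sparse. Tracking differentials at bidegree $(6,5)$, all $d_r$ in and out vanish for $r \leq 5$ because the relevant neighboring $E_2$-entries are zero; the only potentially nonzero differential is $d_6: E_6^{0,10} \to E_6^{6,5}$. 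To establish $E_6^{0,10} = 0$, one shows that the $d_4$-differential $E_4^{0,10} \to E_4^{4,7}$ (a map between copies of $E$) is an isomorphism, forced by the abutment requirements $E_\infty^{0,10} = E_\infty^{4,7} = 0$.

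The main obstacle is that spectral-sequence convergence alone is compatible with two scenarios: one in which $d_3: E_3^{4,7} \to E_3^{7,5}$ is injective (leading to $H^6_{O_0}(H^5_{\ol{O_2}}(S)) = E$), and one in which $d_3 = 0$ (leading to the desired vanishing). Excluding the first scenario requires additional input; the most promising avenues are to apply the long exact sequence of local cohomology for the short exact sequence $0 \to D_2 \to H^5_{\ol{O_2}}(S) \to D_1 \to 0$ combined with an independent determination of $H^6_{O_0}(D_2)$, or to transfer information via the Fourier self-equivalence $\F(B_4) \cong D_1$ and the local cohomology of $D_1$ recorded in Theorem \ref{thm:locD1}.
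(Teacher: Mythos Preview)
Your derivation of $H^{i+1}_{O_0}(B_4)\cong H^i_{O_0}(H^5_{\ol{O_2}}(S))$ via the degenerate spectral sequence $H^p_{O_0}(H^q_{\ol{O_3}}(B_4))\Rightarrow H^{p+q}_{O_0}(B_4)$ is exactly the paper's argument for that part.

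For the vanishing $H^7_{O_0}(B_4)=0$, however, your approach is genuinely stuck, as you yourself recognize. The spectral sequence $H^p_{O_0}(H^q_{\ol{O_2}}(S))\Rightarrow H^{p+q}_{O_0}(S)$ has the entire row $q=5$ unknown, and convergence does not pin down $E_2^{6,5}$. Your first proposed fix, to appeal to an independent computation of $H^6_{O_0}(D_2)$, is circular in this paper: Theorem~\ref{thm:locD2}(0) is proved precisely by invoking Lemma~\ref{lem:H7B4} to force $H^6_{O_0}(H^5_{\ol{O_2}}(S))=0$ in that same spectral sequence. Your second suggestion, using $\F(B_4)\cong D_1$, does not come with a mechanism relating $H^\bullet_{O_0}(B_4)$ to the local cohomology of $D_1$, and no such compatibility of Fourier transform with $H^\bullet_{O_0}$ is available here.

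The paper resolves $H^7_{O_0}(B_4)=0$ by a completely different route that bypasses these spectral sequences. Since any $H^i_{O_0}(B_4)$ is a sum of copies of $E$, the injective hull of $\C=S/\mf{m}$, it suffices to show $\Hom_S(\C,H^7_{O_0}(B_4))=0$. The degenerate spectral sequence $\Ext^i_S(\C,H^j_{O_0}(B_4))\Rightarrow\Ext^{i+j}_S(\C,B_4)$ (degenerate because each $H^j_{O_0}(B_4)$ is injective over $S$) identifies this with $\Ext^7_S(\C,B_4)$, which in turn, via the Koszul resolution of $\C$ and equivariance, amounts to checking that no irreducible constituent of $\bw^7(\bw^3 W)\otimes(-10^6)$ occurs in $B_4$. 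This is then verified directly from the explicit character formula for $B_4$ in Theorem~\ref{thm:charB4} together with a plethysm computation of $\bw^7(\bw^3 W)$. Thus the missing input you need is not another spectral sequence but the $G$-character of $B_4$.
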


\begin{proof}
By (\ref{eq:HB4}), we have $H^1_{\ol{O_3}} (B_4) \cong H^5_{\ol{O_2}}(S)$, and the spectral sequence $H^i_{O_0} (H^j_{\ol{O_3}}(B_4)) \Rightarrow H^{i+j}_{O_0} (B_4)$ degenerates. 

We are left to show that $H^7_{O_0}(B_4)=0$. Since $E$ is the injective hull of the $S$-module $\C \cong S/\mf{m}$ (where $\mf{m}$ denotes the homogeneous maximal ideal in $S$), it enough to show $\Hom_S(\C, H^7_{O_0}(B_4))=0$, or equivalently $\Hom_S(\C \oo (-10^6), H^7_{O_0}(B_4))^G=0$ by taking into account equivariance. Since $H^i_{O_0}(B_4)$ is an injective $S$-module for all $i\geq 0$, the spectral sequence $\Ext^i_S(\C, H^j_{O_0}(B_4)) \Rightarrow \Ext^{i+j}_S(\C,B_4)$ yields $\Hom_S(\C \oo (-10^6), H^7_{O_0}(B_4))^G \cong \Ext^7_S(\C \oo (-10^6), B_4)^G$. Using the Koszul resolution for $\C$, it is enough to show that $B_4 \oo \bw^7  (\bw^3 W^*) \oo (10^6)$ has no $G$-invariant sections, or equivalently none of the irreducible $G$-modules in $\bw^7  (\bw^3 W) \oo (-10^6)$ appear in $B_4$. Using the computer algebra system Magma \cite{magma}, we obtain the plethysm
\[
\begin{aligned}[rl]
[\bw^7 \bw^3 W] &= e^{(4,4,4,3,3,3)} + e^{(5,4,4,4,2,2)}+e^{(5,5,3,3,3,2)}+e^{(5,5,4,4,2,1)}+e^{(5,5,5,3,3,0)}+e^{(6,4,4,3,3,1)}+\\
&+e^{(6,4,4,4,3,0)}+e^{(6,5,3,3,2,2)}+e^{(6,5,4,3,2,1)}+e^{(7,4,3,3,3,1)} + e^{(7,4,4,2,2,2)}.
\end{aligned}
\]
By inspection, we see using Theorem \ref{thm:charB4} that none of the irreducibles in $\bw^7  (\bw^3 W) \oo (-10^6)$ appear in $[B_4]$.
\end{proof}

\begin{theorem}\label{thm:locD2}
For all orbits $O\neq O_0,O_1$ we have $H^{\bullet}_{\ol{O}}(D_2)=H^0_{\ol{O}}(D_2)=D_2$. All of the nonzero local cohomology of $D_2$ with support in $O_0$ and $\ol{O_1}$ is described as follows:
\begin{itemize}
\item[(0)] $H^5_{O_0}(D_2)=E$,  $H^7_{O_0}(D_2)=E$, $H^9_{O_0}(D_2)=E$, $H^{11}_{O_0}(D_2)=E$, $H^{13}_{O_0}(D_2)=E$, $H^{15}_{O_0}(D_2)=E$,\smallskip

\item[(1)] $H^1_{\ol{O_1}}(D_2)=D_1$, $H^3_{\ol{O_1}}(D_2)=D_1$, $H^5_{\ol{O_1}}(D_2)=D_1$, $H^6_{\ol{O_1}}(D_2)=E$, $H^8_{\ol{O_1}}(D_2)=E$, $H^{10}_{\ol{O_1}}(D_2)=E$. 
\end{itemize}
\end{theorem}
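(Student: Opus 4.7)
The plan is to dispose of the easy cases first: for $O \in \{O_2, O_3, O_4\}$, since $\operatorname{supp}(D_2) = \ol{O_2} \subseteq \ol{O}$, the module $D_2$ is already $\Gamma_{\ol{O}}$-closed, so $H^0_{\ol{O}}(D_2) = D_2$ and higher local cohomology vanishes. The heart of the proof is parts (0) and (1), which I address by combining the nonsplit short exact sequence
\[ 0 \to D_2 \to H^5_{\ol{O_2}}(S) \to D_1 \to 0 \]
from part (2) of the Theorem on Local Cohomology with the Grothendieck spectral sequence
\[ E^{i,j}_2 = H^i_Z(H^j_{\ol{O_2}}(S)) \Rightarrow H^{i+j}_Z(S), \]
applied with $Z = \ol{O_1}$ for part (1) and $Z = O_0$ for part (0). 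In both cases the $E_2$ page has only three nonzero columns at $j = 5, 7, 10$, and the abutment is known.

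For part (1), the columns at $j = 7, 10$ are concentrated at $i = 0$ (namely $D_1$ and $E$ respectively, both being already supported in $\ol{O_1}$), so the only nontrivial inter-column differentials are $d_3 \colon (0,7) \to (3,5)$ and $d_6 \colon (0,10) \to (6,5)$. The vanishing of $H^7_{\ol{O_1}}(S)$ and $H^{11}_{\ol{O_1}}(S)$ forces both of these differentials to be injective, and propagating the information antidiagonal by antidiagonal pins down $H^i_{\ol{O_1}}(H^5_{\ol{O_2}}(S))$ completely. Plugging these values into the long exact sequence of the short exact sequence above, using $H^0_{\ol{O_1}}(D_2) = 0$ (since $D_2$ is simple with support $\ol{O_2} \not\subseteq \ol{O_1}$) and $H^0_{\ol{O_1}}(H^5_{\ol{O_2}}(S)) = 0$ (since the composition factor $D_1$ appears only as a quotient in the nonsplit extension), then yields the stated values of $H^i_{\ol{O_1}}(D_2)$.

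For part (0), the spectral sequence is even more constrained by its target $H^n_{O_0}(S) = E \cdot \delta_{n,20}$. Every entry of the $E_2$ page is a direct sum of copies of $E$, and the only inter-column differentials are $d_3$ (from $j = 7$ to $j = 5$), $d_4 \colon (0,10) \to (4,7)$, and $d_6 \colon (0,10) \to (6,5)$. Matching antidiagonals determines $A^i := H^i_{O_0}(H^5_{\ol{O_2}}(S))$ for every $i$ except the pair $(A^6, A^7)$, which a priori is either $(0,0)$ or $(E,E)$ according to whether $d_4$ is the nonzero configuration or instead $d_3$ and $d_6$ are. To break the ambiguity I invoke Lemma \ref{lem:H7B4}, which supplies $A^6 = H^7_{O_0}(B_4) = 0$, so $(A^6, A^7) = (0,0)$. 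Feeding the $A^i$ into the long exact sequence of the short exact sequence, together with $H^i_{O_0}(D_1)$ from Theorem \ref{thm:locD1}, then gives $H^i_{O_0}(D_2) = E$ at exactly the six odd degrees $i = 5, 7, 9, 11, 13, 15$ and zero otherwise. The most delicate step is precisely this case analysis in part (0): the spectral sequence alone admits two internally consistent solutions, and only the auxiliary vanishing from Lemma \ref{lem:H7B4} (itself proved via a plethysm computation and the character formula of Theorem \ref{thm:charB4}) eliminates the phantom one.
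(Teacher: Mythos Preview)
Your proposal is correct and follows essentially the same approach as the paper: for both parts you run the Grothendieck spectral sequence $H^i_Z(H^j_{\ol{O_2}}(S))\Rightarrow H^{i+j}_Z(S)$ (with $Z=\ol{O_1}$ or $O_0$), combine it with the long exact sequence of $0\to D_2\to H^5_{\ol{O_2}}(S)\to D_1\to 0$, and in part~(0) resolve the single remaining ambiguity in $(A^6,A^7)$ via the vanishing $H^7_{O_0}(B_4)=0$ from Lemma~\ref{lem:H7B4}. Your write-up is in fact more explicit than the paper's about the differential bookkeeping and the two-case dichotomy, but the logical structure is identical.
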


\begin{proof}
To prove (1), we consider the spectral sequence $H^i_{\ol{O_1}}(H^j_{\ol{O_2}}(S))\Rightarrow H^{i+j}_{\ol{O_1}}(S)$. Using (1) and (2) in the Theorem on Local Cohomology and Theorem \ref{thm:locD1}, the spectral sequence does not converge properly unless the local cohomology modules are as claimed.

To prove (0), we consider the spectral sequence $H^i_{O_0}(H^j_{\ol{O_2}}(S))\Rightarrow H^{i+j}_{O_0}(S)$. Note that by Lemma \ref{lem:H7B4} we have $H^6_{O_0}(H^5_{\ol{O_2}}(S)) = 0$. Using this, together with parts (0) and (2) in the Theorem on Local Cohomology and Theorem \ref{thm:locD1}, the spectral sequence does not converge properly unless the local cohomology modules are as claimed.
\end{proof}

\begin{lemma}\label{lem:fOne}
For all $O\neq O_4$, we have $H^{\bullet}_{\ol{O}}(S_f)=0$, $H^{\bullet}_{\ol{O}}(S_f\cdot \sqrt{f})=0$, and $H^{\bullet}_{\ol{O}}(\mathcal{D}f^{-1})=H^1_{\ol{O}}(\mathcal{D}f^{-1})=E$.
\end{lemma}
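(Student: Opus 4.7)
The plan is to exploit the fact that $V(f) = \ol{O_3}$, so $\ol{O}\sub V(f)$ for every orbit $O\neq O_4$. For the first two assertions, I would first observe that $f$ acts invertibly on both $S_f$ and $S_f\cdot\sqrt{f}$. Computing local cohomology with support in $V(f)$ via the two-term \v{C}ech complex $M\to M_f$, this map is the identity for both $M=S_f$ and $M=S_f\sqrt{f}$, so $R\Gamma_{V(f)}(S_f) = R\Gamma_{V(f)}(S_f\sqrt{f}) = 0$. Since $\Gamma_{\ol{O}} = \Gamma_{\ol{O}}\circ \Gamma_{V(f)}$ (any section supported in $\ol{O}$ is a fortiori supported in $V(f)$), the Grothendieck spectral sequence $H^i_{\ol{O}}(H^j_{V(f)}(M))\Rightarrow H^{i+j}_{\ol{O}}(M)$ collapses to give $R\Gamma_{\ol{O}}(S_f) = R\Gamma_{\ol{O}}(S_f\sqrt{f}) = 0$.

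For $\D f^{-1}$, I would invoke the composition series already established in the construction of the simples. The filtration $0\sub S\sub \D f^{-1}\sub \D f^{-5}$ from (\ref{eq:filtrations}) exhibits three composition factors $S$, $D_3$, $E$ of $S_f$, which was shown to have holonomic length $3$; this forces $\D f^{-5} = S_f$ and yields the short exact sequence
\[ 0\to \D f^{-1}\to S_f \to E \to 0. \]
The associated long exact sequence in $H^\bullet_{\ol{O}}$, combined with the vanishing from the first paragraph, gives $H^i_{\ol{O}}(\D f^{-1}) \cong H^{i-1}_{\ol{O}}(E)$ for all $i$.

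To finish, $E$ has support equal to $\{O_0\}$, and $O_0\sub \ol{O}$ for every orbit closure considered. Thus $E$ is already supported in $\ol{O}$, so $R\Gamma_{\ol{O}}(E) = E$ concentrated in degree $0$ (use the standard triangle $R\Gamma_{\ol{O}}(E)\to E\to Rj_\ast j^\ast E$ with $j:X\setminus\ol{O}\hookrightarrow X$ and $j^\ast E = 0$). Shifting by one as in the previous paragraph gives $H^1_{\ol{O}}(\D f^{-1}) = E$ and vanishing in all other degrees. No step appears to pose a real obstacle: the entire argument is formal, powered only by invertibility of $f$ on the localized modules and the composition series of $S_f$ already recorded in Theorem \ref{thm:simples}.
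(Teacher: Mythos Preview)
Your proposal is correct and follows essentially the same approach as the paper: the paper likewise deduces the vanishing for $S_f$ and $S_f\sqrt{f}$ from the fact that $f$ is invertible on these modules (citing an analogous argument in the literature), and then uses the same short exact sequence $0\to \D f^{-1}\to S_f\to E\to 0$ together with $H^{\bullet}_{\ol{O}}(E)=H^0_{\ol{O}}(E)=E$ to conclude. Your write-up simply makes the first step explicit via the \v{C}ech complex and the spectral sequence for the composite $\Gamma_{\ol{O}}\circ\Gamma_{V(f)}$, which is exactly what the cited argument amounts to.
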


\begin{proof}
The proofs of the assertions about $S_f$ and $S_f\cdot \sqrt{f}$ are analogous to the proof of \cite[Lemma 6.9]{lHorincz2018iterated}, replacing $\tn{det}$ with $f$. To prove the final assertion, consider the short exact sequence
$$
0\longrightarrow \mathcal{D}f^{-1}\longrightarrow S_f \longrightarrow E\longrightarrow 0.
$$
Since $H^{\bullet}_{\ol{O}}(E)=H^0_{\ol{O}}(E)=E$ for all orbits $O$, the long exact sequence of local cohomology and the first assertion yield the desired result.
\end{proof}

\begin{theorem}\label{thm:locD3}
The following are all the nonzero local cohomology modules of $D_3$ with support in an orbit closure:
\begin{itemize}
\item[(0)] $H^{1}_{O_0}(D_3) = E$, $H^{19}_{O_0}(D_3) = E$. \smallskip
\item[(1)] $H^{1}_{\ol{O_1}}(D_3) = E,\;\;H^{9}_{\ol{O_1}}(D_3) = D_1, \;\; H^{12}_{\ol{O_1}}(D_3) = E, \;\; H^{14}_{\ol{O_1}}(D_3) = E$.\smallskip
\item[(2)] $H^{1}_{\ol{O_2}}(D_3) = E,\;\; 0\to D_2 \to H^{4}_{\ol{O_2}}(D_3) \to D_1 \to 0, \; \; H^{6}_{\ol{O_2}}(D_3) = D_1, \; \; H^{9}_{\ol{O_2}}(D_3) = E.$\smallskip
%H^{8}_{\ol{O_2}}(S) = E
\item[(3)] $H^0_{\ol{O_3}}(D_3)=D_3.$
\end{itemize}
\end{theorem}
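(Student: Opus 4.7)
The plan is to exploit the short exact sequence $0 \to S \to \D f^{-1} \to D_3 \to 0$ coming from the realization $D_3 \cong \D f^{-1}/S$ in Theorem \ref{thm:simples}. Applying the local cohomology functor $H^\bullet_{\ol{O}}(\cdot)$ for each orbit closure produces a long exact sequence in which two of the three terms are already known: the columns for $S$ are given by the Theorem on Local Cohomology together with Proposition \ref{prop:locO1}, and the column for $\D f^{-1}$ is controlled by Lemma \ref{lem:fOne}, which tells us that $H^i_{\ol{O}}(\D f^{-1})$ is zero except in degree $1$, where it equals $E$, for any $O \neq O_4$. Thus the long exact sequence essentially identifies $H^i_{\ol{O}}(D_3)$ with $H^{i+1}_{\ol{O}}(S)$ away from the low-degree contribution of $E$.

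Part (3) is immediate, since $D_3$ is supported on $\ol{O_3}$. For the other three parts, note first that $H^0_{\ol{O}}(D_3) = 0$ whenever $O \neq O_3, O_4$, because any nonzero section supported on $\ol{O}\subsetneq \ol{O_3}$ would generate a proper submodule of the simple module $D_3$. For $O = O_0$ we combine $H^i_{O_0}(S) = 0$ for $i < 20$ with $H^{20}_{O_0}(S) = E$: the long exact sequence pinches off to give $H^1_{O_0}(D_3) \cong H^1_{O_0}(\D f^{-1}) = E$, vanishing for $2 \leq i \leq 18$, and a short exact sequence $0 \to H^{19}_{O_0}(D_3) \to H^{20}_{O_0}(S) \to 0$, yielding $H^{19}_{O_0}(D_3) = E$ and $H^{20}_{O_0}(D_3) = 0$. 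For $O = \ol{O_1}$, the same machine, fed with the three nonzero modules $H^{10}_{\ol{O_1}}(S) = D_1$, $H^{13}_{\ol{O_1}}(S) = H^{15}_{\ol{O_1}}(S) = E$, gives $H^1_{\ol{O_1}}(D_3) = E$ (from $\D f^{-1}$), and in higher degrees isomorphisms $H^9_{\ol{O_1}}(D_3) \cong H^{10}_{\ol{O_1}}(S)$, $H^{12}_{\ol{O_1}}(D_3) \cong H^{13}_{\ol{O_1}}(S)$, $H^{14}_{\ol{O_1}}(D_3) \cong H^{15}_{\ol{O_1}}(S)$, with all intermediate cohomologies zero. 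For $O = \ol{O_2}$ the same argument, using part (2) of the Theorem on Local Cohomology, yields $H^1_{\ol{O_2}}(D_3) = E$, $H^4_{\ol{O_2}}(D_3) \cong H^5_{\ol{O_2}}(S)$, $H^6_{\ol{O_2}}(D_3) \cong H^7_{\ol{O_2}}(S) = D_1$, and $H^9_{\ol{O_2}}(D_3) \cong H^{10}_{\ol{O_2}}(S) = E$.

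The one place where more than a bookkeeping argument is needed is the extension $0 \to D_2 \to H^4_{\ol{O_2}}(D_3) \to D_1 \to 0$: nonsplitness is inherited from the already-established nonsplit extension $0 \to D_2 \to H^5_{\ol{O_2}}(S) \to D_1 \to 0$ via the isomorphism $H^4_{\ol{O_2}}(D_3) \cong H^5_{\ol{O_2}}(S)$. I do not expect any serious obstacle: everything reduces to reading off connecting homomorphisms in the long exact sequence, and the only mild subtlety is checking at each orbit that the map $H^i_{\ol{O}}(S) \to H^i_{\ol{O}}(\D f^{-1})$ is zero in the degrees where both sides could a priori be nonzero, which happens automatically since the target is zero except at $i=1$, and at $i=1$ the source is zero in all four cases.
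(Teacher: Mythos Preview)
Your proposal is correct and follows precisely the same approach as the paper: both use the short exact sequence $0\to S\to \D f^{-1}\to D_3\to 0$ together with the Theorem on Local Cohomology and Lemma~\ref{lem:fOne}, and read off the answer from the resulting long exact sequence. Your write-up simply spells out the bookkeeping that the paper leaves implicit.
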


\begin{proof}
Consider the following short exact sequence 
$$
0\longrightarrow S \longrightarrow \mathcal{D}f^{-1}\longrightarrow D_3 \longrightarrow 0.
$$
Using the Theorem on Local Cohomology and Lemma \ref{lem:fOne}, the result follows by the long exact sequence of local cohomology.
\end{proof}

\begin{lemma}\label{lem:Five}
For all orbits $O\neq O_0,O_4$, we have $H^{\bullet}_{\ol{O}}(\mathcal{D}f^{-5/2})=H^1_{\ol{O}}(\mathcal{D}f^{-5/2})=D_1$. All of the nonzero local cohomology of $\mathcal{D}f^{-5/2}$ with support in $O_0$ is described as follows:
$$
H^5_{O_0}(\mathcal{D}f^{-5/2})=E,\;\;H^7_{O_0}(\mathcal{D}f^{-5/2})=E,\;\; H^{11}_{O_0}(\mathcal{D}f^{-5/2})=E.
$$
\end{lemma}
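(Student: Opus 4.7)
The plan is to exploit the filtration $0 \subset \D f^{-3/2} \subset \D f^{-5/2} \subset \D f^{-7/2}$ of $S_f \cdot \sqrt{f}$ from (\ref{eq:filtrations}). Since this filtration is a composition series of $S_f \cdot \sqrt{f}$ (the module has holonomic length $3$ with composition factors $B_4, D_2, D_1$), the top of the filtration must equal the whole module: $\D f^{-7/2} = S_f \cdot \sqrt{f}$. Together with the identification $\D f^{-7/2}/\D f^{-5/2} \cong D_1$ from Theorem \ref{thm:simples}, this gives the short exact sequence
\[
0 \lra \D f^{-5/2} \lra S_f \cdot \sqrt{f} \lra D_1 \lra 0.
\]

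I would then apply the associated long exact sequence in local cohomology with support in $\ol{O}$ for $O \neq O_4$. By Lemma \ref{lem:fOne}, we have $H^{\bullet}_{\ol{O}}(S_f \cdot \sqrt{f}) = 0$ for all such orbits (including $O = O_0$). Consequently the connecting maps yield isomorphisms
\[
H^{i}_{\ol{O}}(\D f^{-5/2}) \cong H^{i-1}_{\ol{O}}(D_1) \quad \text{for all } i \geq 0.
\]

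Finally, I would plug in the known local cohomology of $D_1$ from Theorem \ref{thm:locD1}. For $O \neq O_0, O_4$ we have $H^{\bullet}_{\ol{O}}(D_1) = H^0_{\ol{O}}(D_1) = D_1$, so $H^1_{\ol{O}}(\D f^{-5/2}) = D_1$ is the only nonzero local cohomology, giving the first assertion. For $O = O_0$, Theorem \ref{thm:locD1} gives $H^4_{O_0}(D_1) = H^6_{O_0}(D_1) = H^{10}_{O_0}(D_1) = E$ and zero otherwise, which shifts up by one to yield the listed values $H^5_{O_0}(\D f^{-5/2}) = H^7_{O_0}(\D f^{-5/2}) = H^{11}_{O_0}(\D f^{-5/2}) = E$.

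There is no serious obstacle here: all the work is in the preceding results. The only conceptual step is recognizing that $\D f^{-7/2}$ coincides with $S_f \cdot \sqrt{f}$, which is immediate from the composition length count. Everything else is a mechanical application of the long exact sequence combined with Lemma \ref{lem:fOne} and Theorem \ref{thm:locD1}.
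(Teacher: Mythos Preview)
Your proof is correct and follows exactly the same route as the paper: the inclusion $\D f^{-5/2}\subset S_f\cdot\sqrt{f}$ with quotient $D_1$, the vanishing $H^\bullet_{\ol{O}}(S_f\cdot\sqrt{f})=0$ from Lemma~\ref{lem:fOne}, and the resulting degree shift $H^i_{\ol{O}}(\D f^{-5/2})\cong H^{i-1}_{\ol{O}}(D_1)$ combined with Theorem~\ref{thm:locD1}. The only thing you add is the explicit justification that $\D f^{-7/2}=S_f\cdot\sqrt{f}$ via the length-$3$ composition series, which the paper leaves implicit.
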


\begin{proof}
All assertions follow from Theorem \ref{thm:locD1} and Lemma \ref{lem:fOne}, using the long exact sequence of local cohomology coming from the inclusion $\mathcal{D}f^{-5/2}\subset S_f\cdot \sqrt{f}$.
\end{proof}

\begin{theorem}\label{thm:locB4}
The following are all the nonzero local cohomology modules of $B_4$ with support in an orbit closure:
\begin{itemize}
\item[(0)] $H^{10}_{O_0}(B_4)=E,\;\; H^{14}_{O_0}(B_4)=E,\;\; H^{16}_{O_0}(B_4)=E,$\smallskip
\item[(1)] $H^4_{\ol{O_1}}(B_4)=D_1,\;\; H^6_{\ol{O_1}}(B_4)=D_1,\;\; H^7_{\ol{O_1}}(B_4)=E,\;\; H^9_{\ol{O_1}}(B_4)=E,\;\; H^{11}_{\ol{O_1}}(B_4)=E.$\smallskip
\item[(2)] $0\to D_2 \to H^1_{\ol{O_2}}(B_4)\to D_1\to 0.$\smallskip
\item[(3)] $0\to D_2 \to H^1_{\ol{O_3}}(B_4)\to D_1\to 0.$
\end{itemize}
\end{theorem}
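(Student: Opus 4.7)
The plan is to prove the four parts in order of increasing difficulty, combining several long exact sequences with one spectral-sequence bookkeeping argument. Part (3) is immediate from the isomorphism $H^{1}_{\ol{O_3}}(B_4)\cong H^{5}_{\ol{O_2}}(S)$ of (\ref{eq:HB4}) combined with part (2) of the Theorem on Local Cohomology; the vanishing of $H^{0}_{\ol{O_3}}(B_4)$ uses that $B_4$ has full support, and the vanishing of $H^{i}_{\ol{O_3}}(B_4)$ for $i\geq 2$ uses that $\ol{O_3}$ is a principal hypersurface. Part (2) would then follow from the short exact sequence $0\to B_4\to \D f^{-5/2}\to D_2\to 0$ extracted from the filtration (\ref{eq:filtrations}), by running the long exact sequence of $\ol{O_2}$-local cohomology and plugging in Lemma \ref{lem:Five} together with $H^{\bullet}_{\ol{O_2}}(D_2)=H^{0}_{\ol{O_2}}(D_2)=D_2$.

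For Parts (1) and (0) I would exploit the short exact sequence $0\to B_4\to S_f\sqrt{f}\to Q\to 0$, where $Q:=S_f\sqrt{f}/B_4$ is the (unique) nonsplit extension $0\to D_2\to Q\to D_1\to 0$ read off from the filtration (\ref{eq:filtrations}) of $S_f\sqrt{f}=\D f^{-7/2}$. Since Lemma \ref{lem:fOne} gives $H^{\bullet}_{\ol{O}}(S_f\sqrt{f})=0$ for every orbit closure $\ol{O}\neq \ol{O_4}$, the associated long exact sequence of local cohomology degenerates into the isomorphism $H^{i}_{\ol{O}}(B_4)\cong H^{i-1}_{\ol{O}}(Q)$ for $i\geq 1$. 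Part (1) then reduces to computing $H^{\bullet}_{\ol{O_1}}(Q)$ via the long exact sequence of $0\to D_2\to Q\to D_1\to 0$, fed by Theorems \ref{thm:locD1} and \ref{thm:locD2}(1). The only non-automatic input is that the connecting map $H^{0}_{\ol{O_1}}(D_1)\to H^{1}_{\ol{O_1}}(D_2)$ (a map between copies of $D_1$) is an isomorphism, which follows from $H^{0}_{\ol{O_1}}(Q)=0$: the socle of $Q$ is $D_2$, whose support $\ol{O_2}$ does not lie in $\ol{O_1}$.

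Part (0) is the main obstacle, because the analogous long exact sequence over $O_0$ contains several ambiguous connecting maps between copies of $E$. To resolve them I would switch to the Grothendieck spectral sequence
\[E_2^{i,j}=H^{i}_{O_0}\bigl(H^{j}_{\ol{O_2}}(S)\bigr)\Longrightarrow H^{i+j}_{O_0}(S),\]
whose abutment equals $E$ in total degree $20$ and vanishes otherwise. Only three columns are nonzero, at $j=5,7,10$: the column $j=5$ is precisely the unknown $H^{\bullet}_{O_0}(Q)$, while columns $j=7$ and $j=10$ are determined by Theorem \ref{thm:locD1} and by $H^{10}_{\ol{O_2}}(S)=E$. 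The crucial external input is Lemma \ref{lem:H7B4}, which gives $H^{7}_{O_0}(B_4)=0$, equivalently $H^{6}_{O_0}(Q)=0$; this forces the relevant $d_3$-differential out of $E_3^{4,7}$ into the $j=5$ column to vanish. With that one rank pinned, the convergence constraints at each total degree uniquely determine the ranks of all remaining $d_3,d_4,d_6$ differentials, and reading off the surviving entries in the $j=5$ column gives $H^{i}_{O_0}(Q)=E$ exactly at $i=9,13,15$; the identity $H^{i+1}_{O_0}(B_4)=H^{i}_{O_0}(Q)$ then produces the asserted values. Without Lemma \ref{lem:H7B4} (which rests on Theorem \ref{thm:charB4} and an explicit plethysm computation) the spectral sequence would leave an essential ambiguity, so the sharp point of the proof lies precisely in invoking that lemma at the right moment.
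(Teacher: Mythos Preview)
Your proposal is correct and largely parallel to the paper's argument. Parts (3) and (0) match the paper exactly: the paper also derives Part (3) from (\ref{eq:HB4}), and for Part (0) it invokes Lemma~\ref{lem:H7B4} and the spectral sequence $H^{i}_{O_0}(H^{j}_{\ol{O_2}}(S))\Rightarrow H^{i+j}_{O_0}(S)$ (this is the same spectral sequence used in the proof of Theorem~\ref{thm:locD2}(0), which is why the paper simply refers back to that proof). Your identification of $e_6=0$ as the one external input needed to resolve the ambiguity between the two possible differential patterns is exactly the role Lemma~\ref{lem:H7B4} plays in the paper.

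The genuine differences are in Parts (1) and (2). For Part (2) the paper deduces it from Part (3) via the degenerate spectral sequence $H^{i}_{\ol{O_2}}(H^{j}_{\ol{O_3}}(B_4))\Rightarrow H^{i+j}_{\ol{O_2}}(B_4)$, which has the advantage of giving $H^{1}_{\ol{O_2}}(B_4)\cong Q$ directly, hence the nonsplitness of the extension for free; your long-exact-sequence route through $\D f^{-5/2}$ gives the short exact sequence but would need an extra word for nonsplitness (your own identity $H^{1}_{\ol{O_2}}(B_4)\cong H^{0}_{\ol{O_2}}(Q)=Q$ supplies it). For Part (1) the paper works with $0\to B_4\to \D f^{-5/2}\to D_2\to 0$ and Lemma~\ref{lem:Five}, and then has to prove $H^{1}_{\ol{O_1}}(B_4)=0$ by an indecomposability argument for $j_*j^*B_4$ using the quiver description. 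Your route through $S_f\sqrt{f}$ and the socle observation $H^{0}_{\ol{O_1}}(Q)=0$ (since $\operatorname{soc}(Q)=D_2$ is not supported on $\ol{O_1}$) is cleaner: it yields the needed connecting isomorphism $D_1\to D_1$ immediately and avoids the auxiliary open-immersion argument.
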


\begin{proof}
Part (0) follows by Lemma \ref{lem:H7B4} since the modules $H^i_{O_0}(H^5_{\ol{O_2}}(S))$ were computed in the $E_2$-page of the spectral sequence used in the proof of Theorem \ref{thm:locD2} part (0).

To prove (1), consider the short exact sequence 
\begin{equation}\label{sesB4}
0\longrightarrow B_4 \longrightarrow \mathcal{D}f^{-5/2}\longrightarrow D_2\longrightarrow 0.
\end{equation}
By Theorem \ref{thm:locD2} and Lemma \ref{lem:Five}, we obtain an exact sequence
\begin{equation}
0\longrightarrow H^1_{\ol{O_1}}(B_4)\longrightarrow D_1 \longrightarrow D_1 \longrightarrow H^2_{\ol{O_1}}(B_4)\longrightarrow 0,
\end{equation}
and $H^i_{\ol{O_1}}(B_4)\cong H^{i-1}_{\ol{O_1}}(D_2)$ for $i\geq 3$. Therefore, it suffices to show that $H^1_{\ol{O_1}}(B_4)=0$. Let $U=X\setminus \ol{O_1}$ and let $j:U\hookrightarrow X$ be the corresponding open immersion. We have the exact sequence
\begin{equation}\label{eq:H1B4}
0 \to B_4 \lra j_*j^*B_4 \lra H^1_{\ol{O_1}}(B_4) \to 0.
\end{equation}
By adjointness, we see that $\End_\D ( j_*j^*B_4) \cong \C$, so $j_*j^*B_4$ is indecomposable. Since $H^1_{\ol{O_1}}(B_4)$ must be some direct sum of the $\D$-modules $E$ and $D_1$, the Theorem on the Quiver Structure implies that the sequence (\ref{eq:H1B4}) must split. Since $j_*j^*B_4$ is indecomposable, we must have $H^1_{\ol{O_1}}(B_4)=0$.

%Applying $j_{\ast}j^{\ast}$ to (\ref{sesB4}) yields the exact sequence
%$$
%0\longrightarrow j_{\ast}j^{\ast}B_4\longrightarrow j_{\ast}j^{\ast}\mathcal{D}f^{-5/2}\longrightarrow j_{\ast}j^{\ast}D_2\longrightarrow H^2_{\ol{O_1}}(B_4)\longrightarrow 0.
%$$ 
%By Lemma \ref{lem:Five}, we have that $j_{\ast}j^{\ast}\mathcal{D}f^{-5/2}$ is isomorphic to $S_f\cdot \sqrt{f}$ or $\mathcal{D}f^{-5/2}\oplus D_1$. \mike{I need to add the exact sequence that I am using here to the preliminaries.} Similarly, by Theorem \ref{thm:locD2}, we have that $j_{\ast}j^{\ast}D_2$ is isomorphic to $(S_f\cdot \sqrt{f})/B_4$ or $D_1\oplus D_2$. By adjointness, $\textnormal{Hom}(D_1, j_{\ast}j^{\ast}\mathcal{D}f^{-5/2})=\textnormal{Hom}(j^{\ast}D_1, j^{\ast}\mathcal{D}f^{-5/2})=0$, where the last equality follows from the fact that $j^{\ast}D_1=0$. Conclude that $j_{\ast}j^{\ast}\mathcal{D}f^{-5/2}\cong S_f\cdot \sqrt{f}$, so that to prove $H^2_{\ol{O_1}}(B_4)=0$, it suffices to show that $j_{\ast}j^{\ast}D_2\cong (S_f\cdot \sqrt{f})/B_4$. By \cite[Lemma 2.4]{bindmod}, $j_{\ast}j^{\ast}D_2$ is the injective hull of $D_2$ in $\textnormal{mod}^{\ol{O_2}}_G(\mathcal{D}_X)$. The result follows, as by the Theorem on Quiver Structure, $(S_f\cdot \sqrt{f})/B_4$ is the injective hull of $D_2$ in $\textnormal{mod}^{\ol{O_2}}_G(\mathcal{D}_X)$.

Part (3) follows from as explained in (\ref{eq:HB4}), and part (2) follows immediately from (3), using the spectral sequence $H^i_{\ol{O_2}}(H^j_{\ol{O_3}}(B_4))\Rightarrow H^{i+j}_{\ol{O_2}}(B_4)$.
\end{proof}

We obtained all the local cohomology modules of each simple equivariant $\D$-module. Note that the indecomposable $\D$-modules $H^1_{\ol{O_3}} (S)$ and $H^1_{\ol{O_3}}(B_4)$ are the only local cohomology modules that are not simple among these. Moreover, the local cohomology of these indecomposables can also be obtained from $H^i_{\ol{O}}(H^1_{\ol{O_3}}(S)) = H^{i+1}_{\ol{O}}(S)$ and $H^i_{\ol{O}}(H^1_{\ol{O_3}}(B_4))= H^{i+1}_{\ol{O}}(B_4)$ (where $O\neq O_4$), since the respective spectral sequences degenerate. This shows that we can compute all \textit{iterated} local cohomology modules of each simple equivariant $\D$-module. In particular, computing $H^i_{O_0}(H^j_{\ol{O}}(S))$ we obtain the Lyubeznik numbers $\ll_{i,j}(\C[\ol{O}]_{\mf{m}})$ (see \cite{lyubeznik}, or the survey \cite{lyub-survey}) for each orbit closure $\ol{O}$, where $\C[\ol{O}]_{\mf{m}}$ is the localization of the coordinate ring of $\ol{O}$ at the maximal homogeneous ideal $\mf{m}$. Since $\ol{O_3}$ is a hypersurface, its only nonzero Lyubeznik number is $\ll_{19,19}(\C[\ol{O_3}]_{\mf{m}})=1$ (see also \cite[Section 4]{lyub-survey}). The nontrivial cases are $\ol{O_1}$ and $\ol{O_2}$:

\begin{corollary}\label{cor:lyubez}
For the rings $R_1 = \C[\ol{O_1}]_{\mf{m}}$ and $R_2=\C[\ol{O_2}]_{\mf{m}}$, the following are the only nonzero Lyubeznik numbers (in which case they are equal to 1):
\begin{itemize}
\item[(1)] $\ll_{0,5}(R_1), \; \ll_{0,7}(R_1),\; \ll_{4,10}(R_1), \; \ll_{6,10}(R_1), \; \ll_{10,10}(R_1);$
\item[(2)] $\ll_{0,10}(R_2), \; \ll_{4,13}(R_2), \; \ll_{6,13}(R_2), \; \ll_{10,13}(R_2), \; \ll_{9,15}(R_2), \; \ll_{13,15}(R_2), \; \ll_{15,15}(R_2).$
\end{itemize}

\end{corollary}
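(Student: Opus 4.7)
The plan is to combine a standard characterization of Lyubeznik numbers with the computations already carried out in this section. Since $E$ is the injective hull of $\C = S/\mf{m}$ (both as an $S$-module and in the equivariant $\D_X$-module category), and every equivariant $\D_X$-module supported at $O_0$ is a direct sum of copies of $E$, we have $\Ext^p_S(\C, E) = 0$ for $p > 0$. The Grothendieck spectral sequence
\[
\Ext^p_S(\C, H^q_{\mf{m}}(M)) \Rightarrow \Ext^{p+q}_S(\C, M)
\]
therefore degenerates at $M = H^{n-j}_{\ol{O}}(S)$, with $n = \dim X = 20$, yielding
\[
\ll_{i,j}(\C[\ol{O}]_{\mf{m}}) = \dim_\C \Ext^i_S\big(\C, H^{n-j}_{\ol{O}}(S)\big) = \big(\text{multiplicity of } E \text{ in } H^i_{O_0}(H^{n-j}_{\ol{O}}(S))\big).
\]
The proof thus reduces to reading off these $E$-multiplicities from results already established.

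For $R_1$, Proposition \ref{prop:locO1} lists the nonzero modules $H^{20-j}_{\ol{O_1}}(S)$: at $j=10$ it is $D_1$, and at $j=7$ and $j=5$ it is $E$. Since $H^{\bullet}_{O_0}(E) = H^0_{O_0}(E) = E$, the latter two give $\ll_{0,5}(R_1) = \ll_{0,7}(R_1) = 1$, while Theorem \ref{thm:locD1} gives $H^i_{O_0}(D_1) = E$ exactly for $i \in \{4,6,10\}$, yielding $\ll_{4,10}(R_1) = \ll_{6,10}(R_1) = \ll_{10,10}(R_1) = 1$.

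For $R_2$, part (2) of the Theorem on Local Cohomology identifies $H^{20-j}_{\ol{O_2}}(S)$ for $j=15$ (the nonsplit extension of $D_1$ by $D_2$), $j=13$ (the module $D_1$) and $j=10$ (the module $E$). The slices $j=10$ and $j=13$ are handled exactly as before using Theorem \ref{thm:locD1}. For the slice $j=15$, rather than attempting to analyze $H^i_{O_0}$ of the extension directly from its long exact sequence (which does not cleanly determine every multiplicity, since consecutive connecting maps $E \to E$ could a priori go either way), we invoke the isomorphism $H^5_{\ol{O_2}}(S) \cong H^1_{\ol{O_3}}(B_4)$ from (\ref{eq:HB4}) together with Lemma \ref{lem:H7B4}, which gives $H^i_{O_0}(H^5_{\ol{O_2}}(S)) = H^{i+1}_{O_0}(B_4)$. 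By Theorem \ref{thm:locB4}, the $E$-multiplicities of $H^{\bullet}_{O_0}(B_4)$ are $1$ exactly in degrees $10, 14, 16$; shifting by one gives $\ll_{9,15}(R_2) = \ll_{13,15}(R_2) = \ll_{15,15}(R_2) = 1$, completing the proof. There is no substantial obstacle here, since all the hard computations were accomplished earlier in the section; the only mildly delicate point is the extension $H^5_{\ol{O_2}}(S)$, which we sidestep via the isomorphism with $H^1_{\ol{O_3}}(B_4)$.
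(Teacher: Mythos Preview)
Your proof is correct and follows essentially the same approach as the paper: the corollary is stated there without an explicit proof, with the preceding paragraph indicating that the Lyubeznik numbers are read off from the iterated local cohomology $H^i_{O_0}(H^{20-j}_{\ol{O}}(S))$ computed throughout the section, using in particular the identification $H^i_{\ol{O}}(H^1_{\ol{O_3}}(B_4))=H^{i+1}_{\ol{O}}(B_4)$ for the indecomposable $H^5_{\ol{O_2}}(S)\cong H^1_{\ol{O_3}}(B_4)$. Your write-up simply spells out the details of this computation, including the standard characterization of $\ll_{i,j}$ as an $E$-multiplicity, and handles the $j=15$ slice for $R_2$ exactly as the paper intends via Lemma~\ref{lem:H7B4} and Theorem~\ref{thm:locB4}(0).
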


\section*{Acknowledgments}

We are grateful to Claudiu Raicu and Jerzy Weyman for helpful conversations and suggestions. Perlman acknowledges the support of the NSF Graduate Research Fellowship under Grant No. DGE-1313583.

\bibliographystyle{alpha}
\bibliography{mybib}

\end{document}